\setlist[enumerate]{leftmargin=.5in}
\setlist[itemize]{leftmargin=.5in}
\newtheorem{rem}{Remark}
\newtheorem{Lemma}{Lemma}
\title{An Alternating \emph{Rank-k} Nonnegative Least Squares Framework (ARkNLS) \\ for Nonnegative Matrix Factorization}
\author{Delin Chu \footnotemark[1]
\and Wenya Shi \footnotemark[2]
\and Srinivas Eswar \footnotemark[3]
\and  Haesun Park \footnotemark[4] }
\begin{document}
\date{today}

\maketitle
\renewcommand{\thefootnote}{\fnsymbol{footnote}}
\footnotetext[1]{Department of Mathematics, National University of Singapore, Singapore
                  119076. E-mail: matchudl@nus.edu.sg.  This author was
                  supported in part by NUS Research Grant R-146-000-187-112. }
\footnotetext[2]{School of Mathematics, China University of Mining and Technology,
                   Xuzhou, 221116, Jiangsu, P.R. China. E-mail: shiwenyaer@163.com.}
\footnotetext[3]{School of Computational Science and Engineering, Georgia Institute of Technology, Atlanta,
                  GA 30332-0765, USA. E-mail: seswar3@cc.gatech.edu}
\footnotetext[4]{School of Computational Science and Engineering, Georgia Institute of Technology, Atlanta, GA 30332-0765, USA. E-mail: hpark@cc.gatech.edu.}

\renewcommand{\thefootnote}{\arabic{footnote}}



\begin{abstract}
Nonnegative matrix factorization (NMF) is a prominent technique for data dimensionality reduction that has been widely used for text mining,
computer vision, pattern discovery, and bioinformatics. In this paper, a framework called ARkNLS (Alternating \emph{Rank-k} Nonnegativity 
constrained Least Squares) is proposed for computing NMF. First, a recursive formula for the solution of the \emph{rank-k}
nonnegativity-constrained least squares (NLS) is established. This recursive formula can be used to derive the closed-form solution
for the \emph{Rank-k NLS} problem for any integer $k\geq 1$. As a result, each subproblem for an alternating \emph{rank-k} nonnegative least squares framework can be obtained based on this closed form solution. Assuming that all matrices involved in \emph{rank-k NLS} in the context of NMF computation are of full rank, two of the currently best NMF algorithms HALS (hierarchical alternating least squares) and ANLS-BPP (Alternating NLS based on Block Principal Pivoting) can be considered as special cases of ARkNLS with $k=1$ and $k=r$ for rank $r$ NMF, respectively.
This paper is then focused on the framework with $k=3$, which leads to a new algorithm for NMF via the closed-form solution of the
\emph{rank-3 NLS} problem. Furthermore, a new strategy that efficiently overcomes the potential singularity problem in \emph{rank-3 NLS} 
within the context of NMF computation is also presented. Extensive numerical comparisons using real and synthetic data sets demonstrate that the proposed 
algorithm provides state-of-the-art performance in terms of computational accuracy and cpu time.
\end{abstract}

\begin{keywords}
Nonnegative matrix factorization, Nonnegative least squares, \emph{Rank-k} residue iteration, Block coordinate descent method.
\end{keywords}
%

\begin{AMS}
  65F15, 65F60, 65F10
\end{AMS}

\section{ Introduction}
\setcounter{equation}{0}
\smallskip

Nonnegative matrix factorization (NMF) \cite{LS}, which performs a constrained low rank approximation of a matrix,
is a commonly used effective  method for data dimensionality reduction and other related tasks.
Given $A\in \mathbb{R}^{m\times n}$ and a desired low rank $r<\min\{m,n\}$ for approximation, NMF aims at finding
two low-rank nonnegative matrices ${U}^*\in \mathbb{R}^{m\times r}$ and ${V}^* \in \mathbb{R}^{n\times r}$ such that
\begin{equation} \label{nmf}
     (U^*, V^*)= {\rm arg} \{ \min \|A-U V^T\|_F^2,~  U \in \mathbb{R}^{m\times r},~ V\in \mathbb{R}^{n\times r},  ~U\geq0,~V \geq0 \},
\end{equation}
where $X\geq0$ means that all elements of a matrix $X$ are nonnegative. NMF problem was first proposed in  \cite{PT} as positive matrix
factorization and popularized due to \cite{LS}. By now it has become a powerful tool for data dimensionality reduction and has found
important applications in many fields such as clustering \cite{CLD,KDP,KYP,LPGF,DDP,GKH}, data mining \cite{PSBP,XLG,DHB+17}, signal
processing \cite{B}, computer vision \cite{BSJJZ,H,Ret}, bioinformatics \cite{BTGM,D,KP}, blind source separation \cite{CZPA}, spectral data analysis \cite{PPP}, and many others.

NMF problem (\ref{nmf}) has been studied extensively and many numerical methods are currently available.  Some of the successful methods
alternatingly compute the unknown low rank factors $U$ and $V$ iteratively, partitioning the unknowns ($U$, $V$) into two blocks. These
existing methods include the projected gradient method \cite{L,ZC1}, the interior point method \cite{MZ}, the projected quasi-Newton method
\cite{KSD,zc}, the active-set method \cite{BJ,BK,KP1,LH}, the active-set-like method \cite{KP2,kp11}, the alternating nonnegative least
squares based on block principal pivoting (ANLS-BPP) method \cite{kp11}. There also exist many variants of NMF \eqref{nmf} that add
constraints and/or penalty terms on $U$ and $V$ for better interpretation and representation of the characteristics of the tasks
\cite{AG,WZ} including sparse NMF \cite{CZPA,SWSL,KP}, orthogonal NMF \cite{LWP}, semi-NMF \cite{PCX}, Joint NMF \cite{DDP}, nonnegative
tensor factorization \cite{CP,CZPA,K,KHP,SBF}, manifold NMF \cite{WWZCX},  kernel NMF \cite{ZH}, regularized NMF \cite{SWSL,WSZ},
Symmetric NMF \cite{HY,TKCJ}, integer constrained~\cite{DLP18}, and so on. A comprehensive review of solving NMF can be found in \cite{KHP,WZ}. Some other NMF algorithms
that compute the solution by partitioning the unknowns into vector blocks include the multiplicative updates (MU) method \cite{LS} and
the hierarchical alternating least squares (HALS) method \cite{CP,CZA,CZPA} (which is also called the \emph{rank-1} residue iteration (RRI)
\cite{Ho}). More recently, random shuffling \cite{CWY} and randomized sampling techniques \cite{EMWK}  were used to accelerate HALS/RRI,
respectively.

In \cite{KHP}, it has been shown that most existing NMF algorithms can be explained using the block coordinate descent (BCD) framework.
Among the BCD framework-based algorithms, ANLS-BPP method \cite{kp11} and HALS/RRI method \cite{CP,CZA,CZPA,Ho}
have been shown to be the most effective in most situations \cite{KHP}. A \emph{rank-2} residue iteration method (RTRI) is proposed,
which is similar to HALS/RRI. 
In these methods, all subproblems that the NMF algorithm encounters
are NLS with a matrix with one column in case of HALS/RRI or a matrix with two columns in case of RTRI. 

In this paper, we establish a new framework for computing NMF where the low rank factors $U$ and $V$ are partitioned into  blocks where
each block consists $k$ columns where $k$ can be any integer with $1 \leq k \leq r$.
We also present a recursive formula for the solution of the \emph{rank-k} nonnegativity-constrained
least squares (NLS). This recursive formula can be used to derive the closed-form solution of the \emph{rank-k NLS} problem for any
integer $k\geq 1$. As a result, we provide a framework called ARkNLS (alternating \emph{rank-k} nonnegative least squares) for NMF.
Based on the framework with $k=3$, we present a new algorithm for NMF via the closed-form solution for the \emph{rank-3 NLS} problem.
When $k=1$  our framework produces the HALS/RRI. When $k=r$, the framework can be reduced to ANLS method (Alternating NLS),
where the subproblems can be solved using the closed form solution based on recursion. However, as will be seen in the next section,
as $k$ becomes larger, the recursion for the closed form solution gets significantly complicated incurring high computational demand.
Accordingly, we conclude that computation of the NMF based on ARkNLS stays efficient when $k$ is relatively small, such as $k=1, 2$ or 3,
and for larger $k$, methods like ANLS-BPP is much more efficient. 

In the BCD based methods, the matrices that appear in all NLS subproblems are assumed to have full rank. In case any of these is rank deficient, then it requires a special remedy. In this paper, we will call this problem {\em  singularity} problem. 
It is well-known that the closed-form solution of the \emph{rank-k NLS} problem may run into a singularity problem in the HALS/RRI.
Typically some small values are added to avoid zero columns in a NLS subproblem in HALS,
but numerically  the solution produced by HALS/RRI has been known to be very sensitive to this 'small' value. 
In order to solve this
singularity problem, in \cite{LZ} instead of using the cyclic strategy of updating two adjacent columns in $U$ or $V$,
two columns of
$U$ which most violate the optimality conditions are selected in terms of the reduced gradients
\[ H_{\rho}(U):=U-[U-\rho (UV^TV-AV)]_{+}, \quad H_{\rho}(V):=V-[V-\rho(V-\rho(VU^TU-A^TU)]_{+}, \]
as follows: let
\[ h=\left[ \begin{array}{ccc} \|H_{\rho}(U)(:, 1)\|^2 & \cdots & H_{\rho}(U)(:, r)\|^2 \end{array}\right], \]
set $(\hat h, s)=\max(h)$,  $h(s)=0$, and $(\tilde h, t)=\max(h)$,
then ${\bf v}_s$ and ${\bf v}_{t}$ are updated.  The  details we refer to \cite{LZ}.
However, theoretically this new strategy cannot completely overcome the singularity problem,
since $\left[ \begin{array}{cc} {\bf u}_s & {\bf u}_t \end{array}\right]$ or
$\left[ \begin{array}{cc} {\bf v}_s & {\bf v}_t \end{array}\right]$ can still be rank deficient in some stage of iterations.
In addition, a parameter $0<\rho\leq 1$ is involved. It is not clear how this $\rho$ can be selected
appropriately and how it affects the computed results since indices $s$ and $t$ depend on the value of this parameter $\rho$.
Moreover, the idea used in \cite{LZ} cannot be used to develop methods for NMF based on the \emph{rank-k} residue iteration for $k\geq 3$.
We present a new strategy that efficiently overcomes the potential singularity problem within the context of NMF computation for $k= 1, 2,$ or 3.

Some notations and definition used in this paper are as follows.
A nonnegative constrained least square problem where the coefficient matrix has $k$ columns and of full rank will be called \emph{rank-k NLS}.
A lowercase letter, such as $x$, denotes a scalar; a boldface lowercase letter, such as ${\bf x}$, denotes a vector;
a boldface uppercase, such as $X$, denotes a matrix. For a matrix $X$, $X(i,:)$, $X(:,j)$ and $X(i,j)$ denote its $i$-th row, $j$-th column
and $(i,j)$-th element of $X$, respectively. We also let ${\bf x}(i)$ denote the $i$-th element of ${\bf x}$. For simplicity,
$X\geq0$ indicates that all the elements of X are nonnegative, $[X]_{+}=\max\{X,{\bf 0}\}$, ${\rm det}(X)$ is the determinant of $X$.

This paper is organized as follows. In Section \ref{sec2} an alternating \emph{rank-k} nonnegative least squares framework for NMF is
developed. The recursive formula for \emph{rank-k NLS} problem is established in Section \ref{sec3}. Then in Section \ref{sec4},
this framework with $k=3$ is specifically highlighted which leads to an new algorithm AR3NLS for NMF.
Numerical experiments are provided in Section \ref{sec5} on some synthetic as well as real data sets to illustrate the numerical behavior of our new
algorithms compared with HALS/RRI, RTRI and  ANLS-BPP. Finally some concluding remarks are given in Section \ref{sec6}.


\section{ARkNLS: A Rank-$k$ NLS based NMF Framework}\label{sec2}
\setcounter{equation}{0}
\smallskip

In this section, we present a framework called ARkNLS (alternating \emph{rank-k} nonnegative least squares) for NMF (\ref{nmf}). ARkNLS represents a set of block coordinate descent methods for NMF where a block consists of $k$ columns of $U$ or $k$ columns of $V$.
Specifically, for NMF (\ref{nmf}),
suppose ${U} \in \mathbb{R}^{m\times r}$ and ${V} \in \mathbb{R}^{n\times r}$
are partitioned  into $q$ blocks each as follows:
\begin{equation}\label{UV}
 U=  \left[ \begin{array}{ccc} U_1 & \cdots & U_q  \end{array}\right], \qquad
   V=  \left[ \begin{array}{ccc} V_1 & \cdots & V_q  \end{array}\right], \qquad
   U_1, \cdots, U_q\in \mathbb{R}^{m\times k}, \quad
   V_1, \cdots, V_q\in \mathbb{R}^{n\times k}.
\end{equation}
For simplicity of discussion, let us assume that $r/k=q$ is an integer, for now.
Later, we will show how the cases can be handled when $r$ is not divisible by $k$.
We have
\[ f(U, V)=f(U_1, \cdots, U_q, V_1, \cdots, V_q):= \|A-UV^T\|^2_F
    =\| U_1V_1^T+\cdots+U_qV_q^T-A\|_F^2. \]
Following the BCD scheme \cite{KHP}, $f$ can be minimized by iteratively solving the following problems:\\
for $i=1, \cdots, q$,
\begin{equation}\label{V}
  V_i=\arg \min_{\mathcal{Y} \geq 0} f(U_1, \cdots, U_q, V_1, \cdots, V_{i-1}, \mathcal{Y},  V_{i+1}, \cdots, V_q)
     =\arg \min_{\mathcal{Y}\geq 0} \| U_i\mathcal{Y}^T-(A-\sum_{l\not= i} U_lV_l^T)\|_F^2
\end{equation}
and for $i=1, \cdots, q$,
\begin{equation}\label{U}
  U_i=\arg \min_{\mathcal{Y}\geq 0} f(U_1, \cdots, U_{i-1}, \mathcal{Y}, U_{i+1}, \cdots, U_q, V_1, \cdots, V_q)
     =\arg \min_{\mathcal{Y}\geq 0} \| V_i \mathcal{Y}^T-(A-\sum_{l\not=i} U_lV_l^T)^T\|_F^2 .
\end{equation}
The above yields the alternating \emph{rank-k} nonnegative
least squares framework ARkNLS for NMF (\ref{nmf}) which is summarized
in Algorithm \ref{alg1-1}.
The proposed ARkNLS is a general framework where we can choose any integer $k$.
When $k=1$, it represents a $2r$ block BCD and is reduced to HALS/RRI.
When $k=r$, it represents a 2 block BCD and ANLS-BPP is one of such algorithms.

\begin{algorithm}[h]
\caption{{\bf ARkNLS}: Alternating \emph{rank-k} Nonnegative Least Squares Framework for NMF} \label{alg1-1}
\begin{algorithmic}
\STATE 1.~Assume $A \in \mathbb{R}^{m\times n}$ and $r \leq \min(m,n)$ are given.\\
Initialize $U\in \mathbb{R}^{m\times r}$ and $V\in \mathbb{R}^{n\times r}$ with $\{U, V\}\geq 0$ \\
Partition as in (\ref{UV}) where each block $U_i$ and $V_i$ has $k$ columns for some integer $k$,
such that $qk = r$. \\
Normalize the columns of $U$.
\STATE 2.~\textbf{Repeat}
\STATE 3. \hspace{0.1in} For $i=1, \cdots, q$, \\
\hspace{0.3in} update $V_i$ by solving \emph{rank-k NLS} problems (\ref{V})
\STATE 4. \hspace{0.1in} For $i=1, \cdots, q$, \\
\hspace{0.3in} update $U_i$ by solving \emph{rank-k NLS} problems (\ref{U})
\STATE 5.~\textbf{Until} a stopping criterion is satisfied
\end{algorithmic}
\end{algorithm}

The subproblems (\ref{V}) and (\ref{U}) are NLS with multiple right hand sides, of the form
\begin{equation}\label{gnmf}
 \min_{Y\geq 0} \|GY- B\|_F^2
\end{equation}
where
$ G=U_i$ and $B=A-\sum_{l\neq i}U_lV_l^T$ for $i=1, \cdots, q$
in (\ref{V}).
Likewise
$G=V_i$ and $B=(A-\sum_{l\neq i}U_lV_l^T)^T$ for $i=1, \cdots, q$
in (\ref{U}).
Therefore, an NLS with multiple right hand side vectors (\ref{gnmf})
is the core problem in ARkNLS, which is our focus in this section.

As illustrated in the following theorem,
a significant aspect of ARkNLS framework is that the NLS (\ref{gnmf}),
accordingly, the subproblems (\ref{V}) and (\ref{U}) have closed-form solutions.

\begin{theorem}\label{thm1-coro}
Assume that $ {B} \in \mathbb{R}^{m\times n}$, $ {G} \in \mathbb{R}^{m\times k}$ and ${\bf g}_{k+1}\in \mathbb{R}^m$ are given,
where  $rank({G})=k$ and $rank(\left[ \begin{array}{cc} G & {\bf g}_{k+1}\end{array}\right])=k+1$.
Denote the unique solution  of the \emph{rank-k NLS} problem (\ref{gnmf}) by $ {S}({G}, {B}) \in \mathbb{R}^{k\times n}$.
Then the unique solution of the \emph{rank-(k+1) NLS} problem
\begin{eqnarray}\label{gn5}
\left[ \begin{array}{c}  {Y}^{\star} \\  {\bf y}_{k+1}^{\star}\end{array}\right]=\arg
\min_{ {Y} \geq 0, {\bf y}_{k+1}\geq 0}
    \| \left[ \begin{array}{cc} {G} & {\bf g}_{k+1} \end{array}\right]
     \left[ \begin{array}{c} {Y} \\ {\bf y}_{k+1} \end{array}\right]-{B} \|_F^2
\end{eqnarray}
is given by
\begin{eqnarray*}
\begin{cases}
{\bf y}_{k+1}^{\star}&=\frac{1}{\|{\bf g}_{k+1}\|^2}  \big[  {\bf g}^T_{k+1}
\big( {B} - {G} \cdot
      {S}({G}-\frac{{\bf g}_{k+1}{\bf g}_{k+1}^T}{\|{\bf g}_{k+1}\|^2} {G},
      {B}-\frac{ {\bf g}_{k+1} {\bf g}_{k+1}^T }{\|{\bf g}_{k+1}\|^2} {B}) \big)
      \big]_+ \in \mathbb{R}^{1\times n}, \\
 {Y}^{\star}&={S}({G}, {B}-{\bf g}_{k+1} {\bf y}^*_{k+1}).
\end{cases}
\end{eqnarray*}
\end{theorem}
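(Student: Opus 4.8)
The plan is to reduce the rank-$(k+1)$ problem to a rank-$k$ problem by treating the last variable ${\bf y}_{k+1}$ specially and exploiting the orthogonal decomposition of $\mathbb{R}^m$ induced by ${\bf g}_{k+1}$. Write $P=\frac{{\bf g}_{k+1}{\bf g}_{k+1}^T}{\|{\bf g}_{k+1}\|^2}$ for the orthogonal projector onto $\mathrm{span}({\bf g}_{k+1})$ and $P^\perp=I-P$. First I would split the objective in \eqref{gn5} using $\|X\|_F^2=\|PX\|_F^2+\|P^\perp X\|_F^2$ applied to the residual $GY+{\bf g}_{k+1}{\bf y}_{k+1}-B$. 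Since $P{\bf g}_{k+1}={\bf g}_{k+1}$ and $P^\perp{\bf g}_{k+1}=0$, the term ${\bf g}_{k+1}{\bf y}_{k+1}$ survives only in the $P$-component, so the objective becomes
\[
  \bigl\|P^\perp G\,Y-P^\perp B\bigr\|_F^2+\bigl\|P G\,Y+{\bf g}_{k+1}{\bf y}_{k+1}-P B\bigr\|_F^2 .
\]
The key observation is that for any fixed feasible $Y\ge 0$, the second term can be driven to its minimum over ${\bf y}_{k+1}\ge 0$ independently (it only couples to $Y$, not to the first term), and since $PG\,Y-PB$ and ${\bf g}_{k+1}{\bf y}_{k+1}$ all lie in the one-dimensional space $\mathrm{span}({\bf g}_{k+1})$, minimizing over ${\bf y}_{k+1}\ge 0$ is a componentwise one-dimensional nonnegative least squares problem whose solution is the nonnegative part of the coefficient, namely ${\bf y}_{k+1}=\frac{1}{\|{\bf g}_{k+1}\|^2}\bigl[{\bf g}_{k+1}^T(B-GY)\bigr]_+$.

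Next I would substitute this optimal ${\bf y}_{k+1}(Y)$ back. On the support where the bracket is positive the second term vanishes; I claim that at the global optimum the residual structure forces us into (or onto the boundary of) that regime in a way that makes the reduced problem in $Y$ exactly a rank-$k$ NLS with the projected data. Concretely, after eliminating ${\bf y}_{k+1}$ the problem for $Y$ reduces to
\[
  Y^\star=\arg\min_{Y\ge 0}\bigl\|P^\perp G\,Y-P^\perp B\bigr\|_F^2
         =\arg\min_{Y\ge 0}\bigl\|(G-PG)\,Y-(B-PB)\bigr\|_F^2 ,
\]
i.e.\ $Y^\star=S\!\left(G-\frac{{\bf g}_{k+1}{\bf g}_{k+1}^T}{\|{\bf g}_{k+1}\|^2}G,\ B-\frac{{\bf g}_{k+1}{\bf g}_{k+1}^T}{\|{\bf g}_{k+1}\|^2}B\right)$. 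Here I need that $\mathrm{rank}(P^\perp G)=k$, which follows from $\mathrm{rank}([G\ {\bf g}_{k+1}])=k+1$: if $P^\perp G{\bf z}=0$ then $G{\bf z}\in\mathrm{span}({\bf g}_{k+1})$, and combined with $\mathrm{rank}(G)=k$ this forces ${\bf z}=0$ unless ${\bf g}_{k+1}$ is in the column space of $G$, contradicting the rank-$(k+1)$ hypothesis. This also guarantees the rank-$k$ solution operator $S$ is well-defined on the projected data, so $Y^\star$ is unique. Finally, plugging $Y^\star$ into ${\bf y}_{k+1}(Y)$ gives the stated formula for ${\bf y}_{k+1}^\star$, and then re-solving for $Y$ with ${\bf y}_{k+1}^\star$ fixed gives the last line $Y^\star=S(G,B-{\bf g}_{k+1}{\bf y}_{k+1}^\star)$, which is consistent because subtracting ${\bf g}_{k+1}{\bf y}_{k+1}^\star$ from $B$ and then applying $S(G,\cdot)$ is exactly the first-order optimality reformulation of the coupled problem.

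The main obstacle I anticipate is justifying rigorously that the decoupling is valid at the \emph{global} optimum rather than just giving a lower bound: a priori, eliminating ${\bf y}_{k+1}$ via its unconstrained-in-sign optimum would be clean, but the constraint ${\bf y}_{k+1}\ge 0$ means the pointwise minimizer is the clipped value $[\cdot]_+$, and one must check that substituting the clipped value still yields a problem in $Y$ equivalent to the projected rank-$k$ NLS — equivalently, that at the optimal $Y^\star$ the KKT conditions of the full problem are satisfied by the pair $(Y^\star,{\bf y}_{k+1}^\star)$. I would handle this by writing down the KKT/stationarity conditions for \eqref{gn5} (gradient of the objective equals a nonnegative multiplier, complementarity with $Y$ and ${\bf y}_{k+1}$), then verifying directly that the candidate pair produced above satisfies them: the ${\bf y}_{k+1}$-block of stationarity is exactly the one-dimensional $[\cdot]_+$ condition, and the $Y$-block, after using ${\bf g}_{k+1}^T(B-GY^\star-{\bf g}_{k+1}{\bf y}_{k+1}^\star)\le 0$ componentwise from the clipping, collapses to the stationarity condition of the projected rank-$k$ problem solved by $S$. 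Uniqueness throughout follows from the full-rank hypotheses, which make every least-squares subproblem strictly convex.
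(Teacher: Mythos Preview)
Your projector decomposition and the rank argument for $P^\perp G$ are correct and match the paper's setup. The genuine gap is the step where you substitute the \emph{clipped} ${\bf y}_{k+1}(Y)=\frac{1}{\|{\bf g}_{k+1}\|^2}[{\bf g}_{k+1}^T(B-GY)]_+$ and assert that the resulting reduced problem in $Y$ is the projected rank-$k$ NLS $\min_{Y\ge 0}\|P^\perp GY-P^\perp B\|_F^2$. It is not: after the substitution, the value function in each column $j$ is
\[
\phi(Y_j)=\|P^\perp GY_j-P^\perp B_j\|^2+\frac{1}{\|{\bf g}_{k+1}\|^2}\bigl[{\bf g}_{k+1}^T(GY_j-B_j)\bigr]_+^{\,2},
\]
and the second summand is genuinely present whenever the clipping is active. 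A concrete counterexample with $k=1$: ${\bf g}_1=(1,1)^T$, ${\bf g}_2=(1,0)^T$, ${\bf b}=(0,1)^T$. The projected minimizer is $\tilde y_1=1$, while the full-problem minimizer (and the minimizer of $\phi$) is $y_1^\star=1/2$ with $y_2^\star=0$. Hence your two formulas $Y^\star=S(P^\perp G,P^\perp B)$ and $Y^\star=S(G,B-{\bf g}_{k+1}{\bf y}_{k+1}^\star)$ are \emph{not} consistent; they disagree exactly when the clipping kicks in. Your KKT sketch does not close this: verifying KKT at the projected candidate $(\tilde Y,{\bf y}_{k+1}^\star)$ fails complementarity in the $Y$-block in the example above ($\tilde y_1=1>0$ but $G^T(G\tilde y_1-{\bf b})=1\neq 0$), and verifying at the re-solved candidate requires showing ${\bf g}_{k+1}^T(GY^\star-B)\ge 0$ on the clipped columns, which is precisely the point in dispute.

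The paper's route avoids this by reversing the order: it first \emph{relaxes} the sign constraint and minimizes over $y_{k+1}\in\mathbb{R}$, which makes the partial minimization exact (no clipping) and \emph{does} reduce cleanly to the projected rank-$k$ NLS, yielding an auxiliary $\tilde Y=S(P^\perp G,P^\perp B)$ together with the unclipped $\tilde{\bf y}_{k+1}={\bf g}_{k+1}^T(B-G\tilde Y)/\|{\bf g}_{k+1}\|^2$. It then invokes a short boundary lemma (Lemma~\ref{lem0}): if a component of $\tilde{\bf y}_{k+1}$ is nonnegative the relaxed optimum is already feasible and hence optimal; if it is negative, the constrained optimum must lie on the face $\{y_{k+1}=0\}$. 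Either way ${\bf y}_{k+1}^\star=[\tilde{\bf y}_{k+1}]_+$, and only \emph{after} this is established does one re-solve for $Y^\star=S(G,B-{\bf g}_{k+1}{\bf y}_{k+1}^\star)$. The quantity $\tilde Y$ inside the bracket is an auxiliary used solely to compute ${\bf y}_{k+1}^\star$, not the final $Y^\star$. What your proposal is missing is this relax-then-boundary argument (or an equivalent) that pins down ${\bf y}_{k+1}^\star$ independently of the eventual $Y^\star$.
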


\begin{proof} Theorem \ref{thm1-coro} follows trivially from Theorem \ref{thm1} which is proved in the next section. \end{proof}

Theorem \ref{thm1-coro} enables us to derive the closed form solution of the \emph{rank-k NLS} problem (\ref{gnmf})
and accordingly for the subproblems (\ref{V}) and (\ref{U}). In addition, according to Theorem 1 in  \cite{KHP}, which characterizes the convergence property of the BCD scheme for NMF, the convergence property
of ARkNLS can be stated as follows.

\begin{theorem}\label{thm-convergence} If $U_i$ and $V_i$, for $i=1, \cdots, q$, are of full column rank  throughout
all the iterations and the unique minimums in (\ref{V}) and (\ref{U}) are attained at each updating step, every limit point
of the sequence $\{ (U,~V)^{(i)} \}$ generated by ARkNLS algorithm is a stationary point of the NMF (\ref{nmf}). Note that this uniqueness cnodition is not needed when $k=r$, i.e., $q=1$.
\end{theorem}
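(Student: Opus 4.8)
The plan is to reduce the statement directly to the general convergence result for the two-block and $2q$-block coordinate descent (BCD) framework for NMF, namely Theorem 1 of \cite{KHP}, by verifying that ARkNLS is an instance of that framework satisfying its hypotheses. First I would recall the precise statement of the BCD convergence theorem from \cite{KHP}: for a block-partitioned nonconvex problem $\min f(x_1,\dots,x_p)$ over a product of closed convex sets, if each block subproblem is solved exactly and has a unique minimizer at every iteration, then every limit point of the generated sequence is a stationary point. The special cases recorded there are that (i) for $p=2$ blocks, uniqueness of each block minimizer is automatic from convexity (in fact one only needs the minimum to be attained), and (ii) for $p\ge 3$ blocks, uniqueness of the minimizer of each block subproblem must be assumed.

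Next I would set up the correspondence. In ARkNLS the variables are the $2q$ blocks $U_1,\dots,U_q,V_1,\dots,V_q$, the feasible set for each block is the closed convex cone of entrywise-nonnegative matrices of the appropriate size, and the objective is $f(U,V)=\|A-UV^T\|_F^2$, which is continuously differentiable and bounded below. The block updates \eqref{V} and \eqref{U} are exactly the cyclic block minimizations of this $f$. The subproblem for block $V_i$ is $\min_{\mathcal Y\ge 0}\|U_i\mathcal Y^T-(A-\sum_{l\neq i}U_lV_l^T)\|_F^2$, which is a convex (NLS) problem; under the hypothesis that $U_i$ has full column rank, the quadratic form $\mathcal Y\mapsto \|U_i\mathcal Y^T\|_F^2$ is strictly convex, so the minimizer is unique, and by Theorem \ref{thm1-coro} it is in fact attained and given in closed form. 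The same reasoning applies to the $U_i$ subproblems using full column rank of $V_i$. Thus all hypotheses of the BCD convergence theorem are met along the whole iteration sequence, and its conclusion gives that every limit point of $\{(U,V)^{(i)}\}$ is a stationary point of \eqref{nmf}. For the final sentence, when $k=r$ we have $q=1$, so there are only $2$ blocks; then case (i) above applies and uniqueness is not needed as an extra assumption — attainment of the minimum (guaranteed here since each block problem is a nonnegativity-constrained least squares problem, which always has a solution) suffices.

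The only genuinely delicate point is the interplay between the standing full-column-rank hypothesis and the uniqueness requirement of the multi-block BCD theorem: I must make sure that full column rank of $U_i$ (resp. $V_i$) at a given iteration is precisely what forces the corresponding block subproblem to have a \emph{unique} minimizer, not merely a minimizer — this is where strict convexity of the reduced quadratic enters, and it is the step to state carefully. Everything else is bookkeeping: matching block sizes, noting that the nonnegative orthant is closed and convex, and confirming $f$ is $C^1$ and coercive on bounded sets so that limit points and stationarity are well defined. No new estimates are required; the proof is essentially a verification that Theorem 1 of \cite{KHP} applies verbatim.
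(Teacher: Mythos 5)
Your proposal is correct and takes essentially the same route as the paper: the paper proves this result simply by invoking Theorem 1 of \cite{KHP} (the BCD convergence theorem), with uniqueness of each block minimizer supplied by the full-column-rank hypothesis (cf.\ Lemma \ref{lem*} / \cite{LH}) and the $q=1$ case covered by the two-block exception. Your write-up merely makes explicit the verification of hypotheses that the paper leaves implicit.
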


It is important to note that for the above theorem to be applicable, we need to have unique
solution for each subproblem when $q>2$ \cite{LH}. When any $U_i$ or $V_i$ is rank deficient,
then the uniqueness of the solution will be violated, and therefore, the above theorem cannot
be applied for proof of convergence to a stationary point.
In case of HALS, $q = r$ and the uniquenss of the solution for all subproblems cannot be guaranteed
when a block $U_i$ or $V_i$ (in case of HALS, these will consist of one vector) becomes
rank deficient (zero vectors).
For NMF algorithms based the BCD scheme with 2 blocks like ANLS-BPP
(where updates are alternated between blocks
$U$ and $V$), the uniqueness is not required and the convergence result
of the above theorem is applicable.
However, in case of NMF, due to the nonuniqueness of the NMF solution $(U^*, V^*)$, we can modify the subproblems
so that the subproblems are always of full rank and therefore, the solution for NLS is unique.
More detailed discussions are presented in Subsection \ref{singularity}.
\section{Recursive Formula for the Solution of \emph{Rank-k NLS} Problem}\label{sec3}
\setcounter{equation}{0}
\smallskip

Problem  (\ref{gnmf})
can be decoupled into independent NLS problems with single right-hand side vector as
\[ \min_{{Y}(:, j) \in \mathbb{R}^{k \times 1},
                                     {Y}(:, j)\geq 0}
                       \|G  {Y}(:, j)-{B}(: j)\|_F^2.
\]
Accordingly, in order to derive the closed-form solution of the \emph{rank-k NLS} problem (\ref{gnmf}) (and so problems (\ref{V}) and (\ref{U})),
we first establish the recursive formula for the solution of the following \emph{rank-k NLS} problem
\begin{eqnarray}\label{n4}
\min_{{\bf y}\geq{\bf 0}} \|G{\bf y}-{\bf b}\|,
\end{eqnarray}
where ${\bf b}\in \mathbb{R}^m$, $G\in \mathbb{R}^{m\times k}$, and ${\rm rank} (G) =k$.

\begin{Lemma}\label{lem0}
Given a continuous and  convex function $f({\bf z})$, and two nonempty closed convex sets $\mathcal{T}$ and $\mathcal{C}$
satisfying $\mathcal{T}\cap\mathcal{C}\neq\emptyset$, assume
\[ \widetilde{\bf z}={\rm arg} \min_{{\bf z}\in \mathcal{T}} f({\bf z}),
\]
and $\widetilde{\bf z}$ is finite.
Assume further that the following constrained optimization problem
\begin{eqnarray}\label{n1}
\min_{{\bf z}\in \mathcal{T}\cap\mathcal{C}} f({\bf z})
\end{eqnarray}
has a finite solution.\\
\hspace{0.1in}
1) If $\widetilde{\bf z}\in \mathcal{C}$, then
${\bf z}^*=\widetilde{\bf z}={\rm arg} \min_{{\bf z}\in \mathcal{T}\cap\mathcal{C}} f({\bf z})$. \\
\hspace{0.1in}
2) If $\widetilde{\bf z}\notin \mathcal{C}$, then there exists a ${\bf {z}}^*\in\mathcal{T}\cap\mathcal{C}_{edge}$ satisfying
${\bf {z}}^*={\rm arg} \min_{{\bf z}\in \mathcal{T}\cap\mathcal{C}} f({\bf z})$,
where $\mathcal{C}_{edge}$ denotes the boundary of $\mathcal{C}$.
\end{Lemma}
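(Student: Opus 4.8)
The plan is to argue directly from convexity, splitting into the two cases as stated. For part 1), suppose $\widetilde{\bf z}\in\mathcal{C}$. Since $\widetilde{\bf z}\in\mathcal{T}$ by definition, we have $\widetilde{\bf z}\in\mathcal{T}\cap\mathcal{C}$, so $\widetilde{\bf z}$ is feasible for \eqref{n1}. For any ${\bf z}\in\mathcal{T}\cap\mathcal{C}\subseteq\mathcal{T}$ we have $f({\bf z})\geq f(\widetilde{\bf z})$ because $\widetilde{\bf z}$ minimizes $f$ over the larger set $\mathcal{T}$. Hence $\widetilde{\bf z}$ is also a minimizer over $\mathcal{T}\cap\mathcal{C}$, which is what we want; this part is essentially immediate.

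The substance is in part 2). Let ${\bf z}^*$ be any finite solution of \eqref{n1}, guaranteed to exist by hypothesis. I would argue by contradiction: suppose ${\bf z}^*$ lies in the (relative) interior of $\mathcal{C}$, i.e. ${\bf z}^*\notin\mathcal{C}_{edge}$. I want to produce a feasible point with strictly smaller (or equal, leading to a contradiction with the assumed non-membership) objective value, using the segment from ${\bf z}^*$ toward $\widetilde{\bf z}$. Consider ${\bf z}_t=(1-t){\bf z}^*+t\widetilde{\bf z}$ for small $t\in(0,1]$. Since $\widetilde{\bf z}\in\mathcal{T}$, ${\bf z}^*\in\mathcal{T}$, and $\mathcal{T}$ is convex, ${\bf z}_t\in\mathcal{T}$ for all $t\in[0,1]$. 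Since ${\bf z}^*$ is in the interior of $\mathcal{C}$, for $t$ sufficiently small ${\bf z}_t\in\mathcal{C}$ as well, so ${\bf z}_t$ is feasible for \eqref{n1}. By convexity of $f$, $f({\bf z}_t)\leq(1-t)f({\bf z}^*)+tf(\widetilde{\bf z})$. Now if $f(\widetilde{\bf z})<f({\bf z}^*)$ strictly, then $f({\bf z}_t)<f({\bf z}^*)$ for all $t\in(0,1]$, contradicting optimality of ${\bf z}^*$. The remaining possibility is $f(\widetilde{\bf z})=f({\bf z}^*)$ (it cannot exceed, since $\widetilde{\bf z}$ minimizes over $\mathcal{T}\supseteq\mathcal{T}\cap\mathcal{C}$); but then $\widetilde{\bf z}$ is itself an optimal solution of \eqref{n1} lying in $\mathcal{T}\cap\mathcal{C}$, forcing $\widetilde{\bf z}\in\mathcal{C}$, contradicting the case hypothesis $\widetilde{\bf z}\notin\mathcal{C}$. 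Either way we reach a contradiction, so every solution ${\bf z}^*$ must lie in $\mathcal{T}\cap\mathcal{C}_{edge}$, and picking any one of them gives the claim.

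The main obstacle I anticipate is purely a matter of being careful with the topology: "boundary of $\mathcal{C}$" versus "relative boundary," and the corresponding notion of interior, since $\mathcal{C}$ may be lower-dimensional (in the NLS application $\mathcal{C}$ is typically a half-space $\{{\bf z}:{\bf z}(i)\geq 0\}$, whose boundary is the hyperplane $\{{\bf z}(i)=0\}$, so this is benign, but the statement is phrased generally). The step "${\bf z}^*$ in the interior of $\mathcal{C}$ implies ${\bf z}_t\in\mathcal{C}$ for small $t$" needs the segment to stay in $\mathcal{C}$, which is automatic once a small enough ball around ${\bf z}^*$ is contained in $\mathcal{C}$; I would state this using the relative interior to keep the argument valid in the degenerate case, or simply note that in all uses of this lemma $\mathcal{C}$ is full-dimensional (a half-space) so ordinary interior suffices. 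A secondary point worth a sentence is that the two-case dichotomy on $f(\widetilde{\bf z})$ versus $f({\bf z}^*)$ is exhaustive precisely because $\widetilde{\bf z}$ is a global minimizer over the superset $\mathcal{T}$, so $f(\widetilde{\bf z})\leq f({\bf z}^*)$ always holds.
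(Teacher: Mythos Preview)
Your argument for Part 1) is fine and matches the paper. The problem is in Part 2), specifically in the equality sub-case.

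You attempt to prove the stronger statement that \emph{every} minimizer of \eqref{n1} lies on $\mathcal{C}_{edge}$, arguing by contradiction. But this stronger statement is false, and your contradiction breaks down precisely when $f(\widetilde{\bf z})=f({\bf z}^*)$. In that case you write ``then $\widetilde{\bf z}$ is itself an optimal solution of \eqref{n1} lying in $\mathcal{T}\cap\mathcal{C}$, forcing $\widetilde{\bf z}\in\mathcal{C}$.'' This is a non-sequitur: equality of objective values does not make $\widetilde{\bf z}$ feasible for \eqref{n1}. A concrete counterexample is $f(x,y)=x^2$ on $\mathbb{R}^2$, $\mathcal{T}=\mathbb{R}^2$, $\mathcal{C}=\{y\le 1\}$, $\widetilde{\bf z}=(0,10)$. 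Here $\widetilde{\bf z}\notin\mathcal{C}$, yet ${\bf z}^*=(0,0)$ is an interior minimizer of \eqref{n1} with $f(\widetilde{\bf z})=f({\bf z}^*)=0$; no contradiction arises, and interior minimizers do exist.

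The fix is to argue constructively rather than by contradiction, and this is exactly what the paper does. Start from any minimizer $\widehat{\bf z}$ of \eqref{n1}; if it is already on $\mathcal{C}_{edge}$, done. Otherwise extend the segment from $\widehat{\bf z}$ (interior of $\mathcal{C}$) toward $\widetilde{\bf z}$ (outside $\mathcal{C}$) until it first meets $\mathcal{C}_{edge}$ at some ${\bf z}^*=(1-t)\widetilde{\bf z}+t\widehat{\bf z}$, $t\in(0,1)$. Convexity of $\mathcal{T}$ keeps ${\bf z}^*\in\mathcal{T}$; closedness of $\mathcal{C}$ gives ${\bf z}^*\in\mathcal{C}$. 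Then $f({\bf z}^*)\le(1-t)f(\widetilde{\bf z})+tf(\widehat{\bf z})\le f(\widehat{\bf z})$, using $f(\widetilde{\bf z})\le f(\widehat{\bf z})$, so ${\bf z}^*$ is also a minimizer and lies on the boundary. Your segment idea and your convexity inequality are the right ingredients; you just need to use them to \emph{produce} a boundary minimizer rather than to rule out interior ones.
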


\begin{proof} Part 1) is obvious. In the following we  prove Part 2).

Let ${\bf \widehat{z}}\in\mathcal{T}\cap\mathcal{C}$ be finite and
\begin{equation} {\bf \widehat{z}}={\rm arg} \min_{\mathcal{T}\cap\mathcal{C}} f({\bf z}). \label{*}
\end{equation}
It is clear that Part 2) follows with ${\bf {z}}^*={\bf \widehat{z}}$ if
${\bf \widehat{z}}\in\mathcal{T}\cap\mathcal{C}_{edge}$.
Otherwise, suppose that
${\bf \widehat{z}}\in\mathcal{T}\cap\mathcal{C}_{int}$,  where $\mathcal{C}_{int}$ is the interior of $\mathcal{C}$.
 Note that $\widetilde {\bf z}\notin \mathcal{C}$, $\widetilde{\bf z}\in \mathcal{T}$ and
 ${\bf \widehat{z}}\in \mathcal{T}\cap\mathcal{C}_{int}$.
Therefore, there exists
${\bf {z}}^* \in\mathcal{T}\cap\mathcal{C}_{edge}$
such that for some $t\in(0,1)$, we have
\[ {\bf {z}}^*=(1-t) \widetilde{\bf z}+t{\bf \widehat{z}}\in \mathcal{T}\cap\mathcal{C}_{edge}. \]
Furthermore, $ f(\widetilde{\bf z})\leq f(\widehat{\bf z})$.
Hence,
$f({\bf {z}}^*)\leq(1-t)f(\widetilde{\bf z})+t f({\bf \widehat{z}})\leq(1-t)f({\bf \widehat{z}})+tf({\bf \widehat{z}}) =f({\bf \widehat{z}})$,
which together with (\ref{*}) yields that
$ f({\bf {z}}^*)=f({\bf \widehat{z}})$
and
${\bf {z}^*}={\rm arg} \min_{{\bf z}\in \mathcal{T}\cap\mathcal{C}} f({\bf z})$.
\end{proof}

\begin{Lemma}\label{lem*}
Assume $G\in \mathbb{R}^{m\times k}$, ${\rm rank} (G) =k$,
and ${\bf b}\in \mathbb{R}^m$.
Then the  solution  of the \emph{rank-k NLS} problem (\ref{n4}) is unique.
\end{Lemma}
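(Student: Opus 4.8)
The plan is to reduce the claim to the standard fact that a strictly convex, coercive function attains its minimum at a unique point over a nonempty closed convex set. First I would replace the objective $\|G{\bf y}-{\bf b}\|$ by its square $\varphi({\bf y}):=\|G{\bf y}-{\bf b}\|^2$. Since $t\mapsto t^2$ is strictly increasing on $[0,\infty)$, the two functions have exactly the same set of minimizers over the feasible region $\mathcal{C}:=\{{\bf y}\in\mathbb{R}^k:{\bf y}\geq{\bf 0}\}$, so it suffices to analyze $\varphi$. Expanding, $\varphi({\bf y})={\bf y}^TG^TG{\bf y}-2{\bf b}^TG{\bf y}+\|{\bf b}\|^2$ is a quadratic function of ${\bf y}$ whose Hessian is $2G^TG$.

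Next I would invoke the full-rank hypothesis: ${\rm rank}(G)=k$ means $G$ has full column rank, hence $G^TG$ is symmetric positive definite. Consequently $\varphi$ is strictly convex on $\mathbb{R}^k$, and it is also coercive, i.e. $\varphi({\bf y})\to\infty$ as $\|{\bf y}\|\to\infty$, because with $\lambda_{\min}(G^TG)>0$ one has $\varphi({\bf y})\geq\lambda_{\min}(G^TG)\|{\bf y}\|^2-2\|G^T{\bf b}\|\,\|{\bf y}\|$. Since $\mathcal{C}$ is nonempty (it contains ${\bf 0}$) and closed, coercivity of $\varphi$ guarantees that its infimum over $\mathcal{C}$ is attained, so at least one minimizer exists.

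Finally, for uniqueness I would argue by contradiction: if ${\bf y}_1\neq{\bf y}_2$ were two minimizers of $\varphi$ over $\mathcal{C}$ with common optimal value $\mu$, then, $\mathcal{C}$ being convex, the midpoint ${\bf y}_m:=\tfrac12({\bf y}_1+{\bf y}_2)$ lies in $\mathcal{C}$, and strict convexity gives $\varphi({\bf y}_m)<\tfrac12\varphi({\bf y}_1)+\tfrac12\varphi({\bf y}_2)=\mu$, contradicting the minimality of $\mu$. Hence the minimizer is unique, which is precisely the assertion of the lemma. There is essentially no hard step here; the only point that needs a moment's care is that the Euclidean norm itself is convex but not strictly convex, so one must pass to the squared norm before invoking strict convexity — and it is exactly the assumption ${\rm rank}(G)=k$ that upgrades mere convexity of $\varphi$ to strict convexity (and coercivity), which is what forces both existence and uniqueness.
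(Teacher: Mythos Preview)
Your argument is correct: passing to the squared objective, using full column rank to get positive definiteness of $G^TG$, and then invoking strict convexity plus coercivity over the closed convex nonnegative orthant is exactly the standard route to existence and uniqueness for this NLS problem. The paper itself does not spell out a proof at all but simply declares the result trivial and defers to the reference \cite{LH}; your write-up is precisely the kind of elementary strict-convexity argument one would expect to find (or reconstruct) from that citation, so there is no meaningful difference in approach to compare.
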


\begin{proof} The proof is trivial and we refer it to  \cite{LH}.
\end{proof}

Now, we establish the recursive formula for the solution of \emph{rank-k NLS} problem (\ref{n4}).

\begin{theorem}\label{thm1}
Assume $G\in \mathbb{R}^{m\times k}$, ${\bf g}_{k+1}\in \mathbb{R}^m$,
${\bf b}\in \mathbb{R}^m$ are given, where
$G$ and $\left[ \begin{array}{cc} G & {\bf g}_{k+1}\end{array}\right]$ are of full column rank.
Denote the unique solution  of the \emph{rank-k NLS} problem (\ref{n4}) by $s(G, {\bf b}) \in \mathbb{R}^k$.
Then the unique solution of the \emph{rank-(k+1) NLS} problem
\begin{eqnarray}\label{n5}
\left[ \begin{array}{c} {\bf y}^{\star} \\  y_{k+1}^{\star}\end{array}\right]= \arg
    \min_{{\bf y}\geq{\bf 0},y_{k+1}\geq0} \| \left[ \begin{array}{cc} G & {\bf g}_{k+1} \end{array}\right]
                                              \left[ \begin{array}{c} {\bf y} \\ y_{k+1}\end{array}\right]-{\bf b}\|
\end{eqnarray}
is given by
\begin{eqnarray*}
\begin{cases}
y_{k+1}^{\star}&=\frac{1}{\|{\bf g}_{k+1}\|^2}\big[{\bf g}^T_{k+1}\big({\bf b}-G\cdot
s(G-\frac{{\bf g}_{k+1}{\bf g}_{k+1}^T}{\|{\bf g}_{k+1}\|^2} G, {\bf b}-\frac{{\bf g}_{k+1} {\bf g}_{k+1}^T }{\|{\bf g}_{k+1}\|^2} {\bf b}))
\big]_+, \\
{\bf y}^{\star}&=s(G, {\bf b}-{\bf g}_{k+1}y^*_{k+1}).
\end{cases}
\end{eqnarray*}
\end{theorem}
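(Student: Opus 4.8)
The plan is to eliminate the last variable $y_{k+1}$ in two stages: first solve (\ref{n5}) after dropping the sign constraint $y_{k+1}\ge 0$, and then use Lemma~\ref{lem0} to decide whether the resulting value of $y_{k+1}$ is admissible or must be set to zero. Throughout, every minimizer that appears is unique by Lemma~\ref{lem*} (applied in turn to $G$, to $[\,G\ {\bf g}_{k+1}\,]$, and to the projected matrix $PG$ introduced below), all of which have full column rank; in particular ${\bf g}_{k+1}\ne{\bf 0}$, and all the least squares objectives are coercive on their feasible cones, hence attain finite minima. The formula for ${\bf y}^\star$ will come for free: if $({\bf y}^\star,y_{k+1}^\star)$ is the unique solution of (\ref{n5}) and we freeze its last coordinate, then ${\bf y}^\star$ minimizes $\|G{\bf y}-({\bf b}-{\bf g}_{k+1}y_{k+1}^\star)\|$ over ${\bf y}\ge{\bf 0}$, which by the definition of $s(\cdot,\cdot)$ is exactly $s(G,\,{\bf b}-{\bf g}_{k+1}y_{k+1}^\star)$. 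So the entire content is to pin down $y_{k+1}^\star$.

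For that, introduce the orthogonal projector $P:=I-\frac{{\bf g}_{k+1}{\bf g}_{k+1}^T}{\|{\bf g}_{k+1}\|^2}$ onto ${\bf g}_{k+1}^\perp$; note that $PG$ and $P{\bf b}$ are precisely the arguments appearing inside $s(\cdot,\cdot)$ in the claimed formula. For fixed ${\bf y}$, minimizing $\|G{\bf y}+{\bf g}_{k+1}y_{k+1}-{\bf b}\|^2$ over $y_{k+1}\in\mathbb{R}$ gives $y_{k+1}=\frac{{\bf g}_{k+1}^T({\bf b}-G{\bf y})}{\|{\bf g}_{k+1}\|^2}$ and leaves residual $P(G{\bf y}-{\bf b})$, so the \emph{relaxed} problem $\min_{{\bf y}\ge{\bf 0},\,y_{k+1}\in\mathbb{R}}\|G{\bf y}+{\bf g}_{k+1}y_{k+1}-{\bf b}\|$ is equivalent to $\min_{{\bf y}\ge{\bf 0}}\|(PG){\bf y}-P{\bf b}\|$. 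Here $PG$ still has full column rank: $PG{\bf c}={\bf 0}$ means $G{\bf c}\in{\rm span}({\bf g}_{k+1})$, which by full column rank of $[\,G\ {\bf g}_{k+1}\,]$ forces ${\bf c}={\bf 0}$. Hence by Lemma~\ref{lem*} the reduced problem has unique solution $\widetilde{\bf y}=s(PG,P{\bf b})$, and a short argument (optimize ${\bf y}$ and $y_{k+1}$ separately, using strict convexity in $y_{k+1}$) shows that $\widetilde{\bf z}:=(\widetilde{\bf y},\,\widetilde y_{k+1})$ with $\widetilde y_{k+1}:=\frac{{\bf g}_{k+1}^T({\bf b}-G\widetilde{\bf y})}{\|{\bf g}_{k+1}\|^2}$ is the unique minimizer of $f({\bf z}):=\|[\,G\ {\bf g}_{k+1}\,]{\bf z}-{\bf b}\|^2$ over $\mathcal{T}:=\{({\bf y},y_{k+1}):{\bf y}\ge{\bf 0}\}$.

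Now apply Lemma~\ref{lem0} with this $f$, with $\mathcal{T}$ as above and $\mathcal{C}:=\{({\bf y},y_{k+1}):y_{k+1}\ge0\}$, so that $\mathcal{T}\cap\mathcal{C}$ is the feasible set of (\ref{n5}) and $\mathcal{C}_{edge}=\{y_{k+1}=0\}$; its hypotheses hold ($f$ continuous convex, $\mathcal{T},\mathcal{C}$ nonempty closed convex with nonempty intersection, $\widetilde{\bf z}$ and the minimizer of (\ref{n5}) finite). If $\widetilde y_{k+1}\ge0$, equivalently ${\bf g}_{k+1}^T({\bf b}-G\widetilde{\bf y})\ge0$, then $\widetilde{\bf z}\in\mathcal{C}$ and part~1 gives $({\bf y}^\star,y_{k+1}^\star)=\widetilde{\bf z}$, so $y_{k+1}^\star=\widetilde y_{k+1}=\frac{1}{\|{\bf g}_{k+1}\|^2}\big[{\bf g}_{k+1}^T\big({\bf b}-G\,s(PG,P{\bf b})\big)\big]_+$, the clipping being vacuous. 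If $\widetilde y_{k+1}<0$, then $\widetilde{\bf z}\notin\mathcal{C}$ and part~2 produces a minimizer of $f$ over $\mathcal{T}\cap\mathcal{C}$ lying in $\mathcal{T}\cap\mathcal{C}_{edge}$; since the minimizer of (\ref{n5}) is unique it must be that one, so $y_{k+1}^\star=0$, which again equals $\frac{1}{\|{\bf g}_{k+1}\|^2}\big[{\bf g}_{k+1}^T\big({\bf b}-G\,s(PG,P{\bf b})\big)\big]_+$ since the bracketed quantity is now negative. In both cases $y_{k+1}^\star$ has the asserted form, and combining with the first paragraph yields ${\bf y}^\star=s(G,{\bf b}-{\bf g}_{k+1}y_{k+1}^\star)$.

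I expect the main obstacle to be the bookkeeping of the ``project out ${\bf g}_{k+1}$'' step in the second paragraph: checking that $PG$ inherits full column rank (so that $s(PG,P{\bf b})$ is defined and Lemma~\ref{lem*} applies), and confirming that relaxing the sign of $y_{k+1}$ and then eliminating it is genuinely equivalent to the joint relaxed minimization, so that $\widetilde{\bf z}$ really is $\arg\min_{{\bf z}\in\mathcal{T}}f({\bf z})$ as Lemma~\ref{lem0} requires. Once that is in place, the rest is just the two-way case split on the sign of ${\bf g}_{k+1}^T({\bf b}-G\widetilde{\bf y})$, which is exactly the distinction encoded by the $[\,\cdot\,]_+$ in the statement.
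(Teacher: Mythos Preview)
Your proposal is correct and follows essentially the same approach as the paper's own proof: relax the sign constraint on $y_{k+1}$, eliminate it via the orthogonal projector $P=I-{\bf g}_{k+1}{\bf g}_{k+1}^T/\|{\bf g}_{k+1}\|^2$ to reduce to a rank-$k$ NLS in $PG$ and $P{\bf b}$, verify $PG$ has full column rank, then invoke Lemma~\ref{lem0} for the two-way case split on the sign of $\widetilde y_{k+1}$. Your choice of $\mathcal{C}=\{y_{k+1}\ge0\}$ (so that $\mathcal{C}_{edge}=\{y_{k+1}=0\}$) is in fact slightly cleaner than the paper's $\mathcal{C}=\{{\bf y}\ge0,\,y_{k+1}\ge0\}$, whose boundary also contains faces with $y_i=0$ for $i\le k$; your formulation makes the conclusion $y_{k+1}^\star=0$ in the second case immediate from Lemma~\ref{lem0} without a further appeal to uniqueness.
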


\begin{proof}
First let us consider the optimization problem
\begin{eqnarray}\label{n6}
\left[ \begin{array}{c} \widetilde {\bf y} \\ \widetilde y_{k+1} \end{array}\right]=\arg
 \min_{{\bf y}\geq{\bf 0},y_{k+1}\in \mathbb{R}} \| \left[ \begin{array}{cc} G & {\bf g}_{k+1} \end{array}\right]
                                              \left[ \begin{array}{c} {\bf y} \\ y_{k+1}\end{array}\right]-{\bf b}\|.
\end{eqnarray}
For any given ${\bf y}$, the solution $y_{k+1}$ to the optimization problem
\[  \min_{y_{k+1}\in \mathbb{R}} \| {\bf g}_{k+1} y_{k+1} -({\bf b}-G {\bf y})\| \]
is uniquely given by
\begin{equation}\label{N1} y_{k+1}=\frac{{\bf g}^T_{k+1}({\bf b}-G{\bf y})}{\|{\bf g}_{k+1}\|^2}.
\end{equation}
Accordingly, the optimization problem \eqref{n6} can be reduced to
\begin{eqnarray}\label{n7}
\widetilde {\bf y} =\arg
\min_{{\bf y}\geq{\bf 0}} \|(G{\bf y}+{\bf g}_{k+1}\frac{{\bf g}^T_{k+1}({\bf b}-G{\bf y})}{\|{\bf g}_{k+1}\|^2})-{\bf b}\|
      =\arg \min_{{\bf y}\geq{\bf 0}}\|\widetilde{G}{\bf y}-{\bf \widetilde{b}}\|,
\end{eqnarray}
where
\[ {\bf \widetilde{b}}={\bf b}-\frac{{\bf g}_{k+1}{\bf g}^T_{k+1}}{\|{\bf g}_{k+1}\|^2}{\bf b}, \quad
 \widetilde{G}=G-\frac{{\bf g}_{k+1}{\bf g}^T_{k+1}}{\|{\bf g}_{k+1}\|^2}G.
\]
Moreover, it holds that
\[ k+1=\emph{rank}(\left[ \begin{array}{cc} G & {\bf g}_{k+1} \end{array}\right])
   =\emph{rank} ({\bf g}_{k+1}) +\emph{rank}((I-\frac{{\bf g}_{k+1}{\bf g}^T_{k+1}}{\|{\bf g}_{k+1}\|^2})G)
   =1+\emph{rank}(\widetilde{G}), \]
i.e., $\emph{rank}(\widetilde{G})=k$ and $\widetilde{G}$ is of full column rank.
Note that according to our notation we have
\[ s(\widetilde{G}, {\bf \widetilde{b}})=\widetilde {\bf y}=\arg \min_{{\bf y}\geq{\bf 0}}\|\widetilde{G}{\bf y}-{\bf \widetilde{b}}\|, \]
thus, it follows from (\ref{N1}) that
\begin{equation}\label{N2}
\widetilde y_{k+1}=\frac{ {\bf g}^T_{k+1} ({\bf b}-G \widetilde {\bf y})} {\|{\bf g}_{k+1}\|^2}
   =\frac{{\bf g}^T_{k+1}({\bf b}-G\cdot s(\widetilde{G}, {\bf \widetilde{b}}))}{\|{\bf g}_{k+1}\|^2}.
\end{equation}
Consequently, the optimization problem \eqref{n6} becomes
\[  \widetilde {\bf y}=\arg
 \min_{{\bf y}\geq{\bf 0}} \|  G {\bf y}-({\bf b}-{\bf g}_{k+1} \widetilde y_{k+1})\|, \]
which, according to our notation again,  can be rewritten as
\begin{equation}\label{N3} \widetilde {\bf y} = s(G, {\bf b}-{\bf g}_{k+1} \widetilde y_{k+1}).
\end{equation}
Let
\[ \mathcal{T}:=\{ \left[ \begin{array}{c} {\bf y} \\ y_{k+1} \end{array}\right] |~{\bf y}\in \mathbb{R}^k, ~{\bf y}\geq0\}, \qquad
\mathcal{C}:=\{ \left[ \begin{array}{c} {\bf y} \\ y_{k+1} \end{array}\right] |~{\bf y}\in \mathbb{R}^k, ~{\bf y}\geq0, ~y_{k+1}\geq 0\}. \]
By Lemma \ref{lem0}, (\ref{N2}) and (\ref{N3}),
\begin{itemize}
\item if $\widetilde y_{k+1}\geq0$, then $\left[ \begin{array}{c} {\bf y}^{\star} \\  y_{k+1}^{\star}\end{array}\right]$ with
\begin{eqnarray*}
y^*_{k+1} &=& \widetilde y_{k+1} = \frac{{\bf g}^T_{k+1}({\bf b}-G\cdot s(\widetilde{G}, {\bf \widetilde{b}}))}{\|{\bf g}_{k+1}\|^2}
=\frac{[{\bf g}^T_{k+1}({\bf b}-G\cdot s(\widetilde{G}, {\bf \widetilde{b}}))]_+}{\|{\bf g}_{k+1}\|^2}, \\
{\bf y}^{\star} &=& \widetilde {\bf y} = s(G, {\bf b}-{\bf g}_{k+1} \widetilde y_{k+1})
   =s(G, {\bf b}-{\bf g}_{k+1} y^*_{k+1}),
\end{eqnarray*}
is the unique solution of the problem \eqref{n5};
\item if $\widetilde y_{k+1}<0$, then the unique solution $y_{k+1}$ of the problem  \eqref{n5} is
$$ y^{\star}_{k+1}=0=[\widetilde y_{k+1}]_{+}=
\frac{[{\bf g}^T_{k+1}({\bf b}-G\cdot s(\widetilde{G}, {\bf \widetilde{b}}))]_+}{\|{\bf g}_{k+1}\|^2}, $$
and consequently, the unique solution ${\bf y}$ of the problem \eqref{n5} is given by
\[ {\bf y}^{\star}=\arg \min_{{\bf y}\geq 0}\| G{\bf y}-{\bf b}\|=\arg \min_{{\bf y}\geq 0}\| G{\bf y}-({\bf b}-{\bf g}_{k+1}y^{\star}_{k+1}) \|, \]
which yields that
\[ {\bf y}^{\star}=s(G, {\bf b}-{\bf g}_{k+1}y^*_{k+1}). \]
\end{itemize}

\end{proof}

Theorem \ref{thm1} can be used to derive the closed-form solution of the \emph{rank-k NLS} problem (\ref{n4}) for any integer $k\geq 1$.
We derive the closed-form solutions
of the \emph{rank-2 NLS} and \emph{rank-3 NLS} (without recursion) in the following two corollaries.

\begin{corollary}\label{cor1}
Assume that $G=\left[ \begin{array}{cc} {\bf g}_1 & {\bf g}_2\end{array}\right]\in \mathbb{R}^{m\times 2}$ and $rank(G)=2$.
Then the unique solution of the  \emph{rank-2 NLS} problem
\begin{align}\label{n10}
\left[ \begin{array}{c} y_1^* \\  y_2^*\end{array}\right]=\arg \min_{{\bf y}\geq0}\|G{\bf y}-{\bf b}\|=
{\rm arg} \min_{\{y_1,y_2\}\geq0}\|y_1{\bf g}_1+y_2{\bf g}_2-{\bf b}\|
\end{align}
is given by
\begin{eqnarray}\label{n9}
\begin{cases}
y_2^*&=\frac{1}{\|{\bf g}_2\|^2}\left[{\bf b}^T{\bf g}_2-{\bf g}^T_2{\bf g}_1
       \left[\frac{\|{\bf g}_2\|^2{\bf b}^T{\bf g}_1-{\bf b}^T{\bf g}_2\cdot{\bf g}^T_2{\bf g}_1}{ \|{\bf g}_1\|^2\|{\bf g}_2\|^2
        -({\bf g}_1^T{\bf g}_2)^2  } \right]_{+}\right]_{+}\\
y_1^*&=\frac{1}{\|{\bf g}_1\|^2}[{\bf b}^T{\bf g}_1-({\bf g}^T_2{\bf g}_1)y_2^*]_{+}.
\end{cases}
\end{eqnarray}
\end{corollary}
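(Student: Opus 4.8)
The plan is to apply Theorem \ref{thm1} directly with $k=1$, so that the recursion bottoms out at the trivial rank-$1$ NLS problem, whose closed form is elementary. Concretely, I would set $G_1 := {\bf g}_1$ (the "inner" rank-$1$ coefficient) and play the role of ${\bf g}_{k+1}$ with ${\bf g}_2$; thus the rank-$2$ problem \eqref{n10} is exactly the rank-$(k+1)$ problem \eqref{n5} of Theorem \ref{thm1} with $k=1$. The first step is to record the base case: for a single nonzero vector ${\bf g}$, the solution of $\min_{y\ge 0}\|{\bf g}y-{\bf b}\|$ is $s({\bf g},{\bf b})=\frac{1}{\|{\bf g}\|^2}[{\bf g}^T{\bf b}]_+$, which follows by minimizing the quadratic in $y$ over $\mathbb{R}$ and projecting onto $[0,\infty)$ (this is the $k=0\to 1$ instance of the same argument used in Theorem \ref{thm1}, or just a direct one-line computation).

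Next I would instantiate the formula from Theorem \ref{thm1}. For $y_2^*$ we need $s(\widetilde G,\widetilde{\bf b})$ where $\widetilde G = {\bf g}_1 - \frac{{\bf g}_2{\bf g}_2^T}{\|{\bf g}_2\|^2}{\bf g}_1$ and $\widetilde{\bf b} = {\bf b} - \frac{{\bf g}_2{\bf g}_2^T}{\|{\bf g}_2\|^2}{\bf b}$. Applying the base case gives $s(\widetilde G,\widetilde{\bf b}) = \frac{[\widetilde G^T\widetilde{\bf b}]_+}{\|\widetilde G\|^2}$, and the computation to carry out is $\widetilde G^T\widetilde{\bf b} = {\bf g}_1^T{\bf b} - \frac{({\bf g}_1^T{\bf g}_2)({\bf g}_2^T{\bf b})}{\|{\bf g}_2\|^2}$ together with $\|\widetilde G\|^2 = \|{\bf g}_1\|^2 - \frac{({\bf g}_1^T{\bf g}_2)^2}{\|{\bf g}_2\|^2}$, using that the projector $I-\frac{{\bf g}_2{\bf g}_2^T}{\|{\bf g}_2\|^2}$ is idempotent and symmetric. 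Clearing the common factor $\|{\bf g}_2\|^2$ from numerator and denominator turns $s(\widetilde G,\widetilde{\bf b})$ into $\left[\frac{\|{\bf g}_2\|^2{\bf b}^T{\bf g}_1 - ({\bf b}^T{\bf g}_2)({\bf g}_2^T{\bf g}_1)}{\|{\bf g}_1\|^2\|{\bf g}_2\|^2-({\bf g}_1^T{\bf g}_2)^2}\right]_+$, which is exactly the bracketed inner expression in \eqref{n9}. Substituting this into $y_2^* = \frac{1}{\|{\bf g}_2\|^2}[{\bf g}_2^T({\bf b}-{\bf g}_1\,s(\widetilde G,\widetilde{\bf b}))]_+ = \frac{1}{\|{\bf g}_2\|^2}[{\bf b}^T{\bf g}_2 - ({\bf g}_2^T{\bf g}_1)\,s(\widetilde G,\widetilde{\bf b})]_+$ yields the first line of \eqref{n9}.

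Finally, for $y_1^*$ Theorem \ref{thm1} gives ${\bf y}^\star = s({\bf g}_1, {\bf b}-{\bf g}_2 y_2^*)$, and the base case again gives $s({\bf g}_1,{\bf b}-{\bf g}_2 y_2^*) = \frac{1}{\|{\bf g}_1\|^2}[{\bf g}_1^T({\bf b}-{\bf g}_2 y_2^*)]_+ = \frac{1}{\|{\bf g}_1\|^2}[{\bf b}^T{\bf g}_1 - ({\bf g}_2^T{\bf g}_1)y_2^*]_+$, which is the second line of \eqref{n9}. The hypothesis $\mathrm{rank}(G)=2$ guarantees ${\bf g}_2\neq 0$ and, via the rank-splitting identity already used in the proof of Theorem \ref{thm1}, that $\widetilde G\neq 0$, so every denominator above ($\|{\bf g}_1\|^2$, $\|{\bf g}_2\|^2$, and the Gram determinant $\|{\bf g}_1\|^2\|{\bf g}_2\|^2-({\bf g}_1^T{\bf g}_2)^2$) is strictly positive and all the divisions are legitimate. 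The only part requiring any care is the bookkeeping that reconciles the two different but equal forms of the inner bracket — before and after cancelling $\|{\bf g}_2\|^2$ — and checking that no sign is lost when that positive factor is pulled through the $[\cdot]_+$; everything else is a direct substitution into Theorem \ref{thm1}.
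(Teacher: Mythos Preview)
Your proposal is correct and follows essentially the same route as the paper: apply Theorem~\ref{thm1} with $k=1$, use the elementary closed form $s({\bf g},{\bf b})=\frac{[{\bf g}^T{\bf b}]_+}{\|{\bf g}\|^2}$ for the rank-$1$ base case, compute $s(\widetilde G,\widetilde{\bf b})$ via the idempotence of the projector $I-\frac{{\bf g}_2{\bf g}_2^T}{\|{\bf g}_2\|^2}$, and then substitute. Your remark about pulling the positive factor $\|{\bf g}_2\|^2$ through $[\cdot]_+$ is a detail the paper leaves implicit, but otherwise the arguments coincide.
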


\begin{proof}
Since  the solution of the \emph{rank-1 NLS} problem
$ \min_{y_1\geq0}\|y_1{\bf g}_1-{\bf b}\| $
is given by
$s({\bf g}_1, {\bf b})= \frac{[{\bf g}^T_1{\bf b}]_+}{\|{\bf g}_1\|^2}$,
according to Theorem \ref{thm1}, the solution $y_2^*$ to \emph{rank-2 NLS} problem \eqref{n10} is
$$y^*_2=\frac{\big[{\bf g}^T_{2}({\bf b}-{\bf g}_1\cdot
s({\bf g}_1-\frac{{\bf g}_{2}{\bf g}^T_{2}}{\|{\bf g}_{2}\|^2}{\bf g}_1, {\bf b}-\frac{{\bf g}_{2}{\bf g}^T_{2}}{\|{\bf g}_{2}\|^2}{\bf b})\big]_+}{\|{\bf g}_{2}\|^2}.$$
Since
\begin{eqnarray*}
s({\bf g}_1-\frac{{\bf g}_{2}{\bf g}^T_{2}}{\|{\bf g}_{2}\|^2}{\bf g}_1, {\bf b}-\frac{{\bf g}_{2}{\bf g}^T_{2}}{\|{\bf g}_{2}\|^2}{\bf b})
&=& \frac{\big[\big({\bf g}_1-{\bf g}_{2}\frac{{\bf g}^T_{2}{\bf g}_1}{\|{\bf g}_{2}\|^2}\big)^T\big({\bf b}-{\bf g}_{2}\frac{{\bf g}^T_{2}{\bf b}}{\|
{\bf g}_{2}\|^2}\big)\big]_+}{\left\|{\bf g}_1-{\bf g}_{2}\frac{{\bf g}^T_{2}{\bf g}_1}{\|{\bf g}_{2}\|^2}\right\|^2} \\
&=& \frac{\left[\|{\bf g}_2\|^2{\bf b}^T{\bf g}_1-{\bf b}^T{\bf g}_2\cdot{\bf g}^T_2{\bf g}_1\right]_{+}}
{  \|{\bf g}_1\|^2\|{\bf g}_2\|^2-({\bf g}_1^T{\bf g}_2)^2 },
\end{eqnarray*}
(\ref{n9}) holds. \end{proof}

\begin{corollary}\label{cor2}
Assume that $G=\left[ \begin{array}{ccc} {\bf g}_1 & {\bf g}_2 &  {\bf g_3} \end{array}\right]\in \mathbb{R}^{m\times 3}$
and $rank(G)=3$. Then the unique solution of the \emph{rank-3 NLS} problem
\begin{align*}
  \left[ \begin{array}{c} y^*_1 \\ y^*_2 \\ y^*_3\end{array}\right]={\rm arg}
  \min_{{\bf y}\geq0}\|G{\bf y}-{\bf b}\|
 = {\rm arg} \min_{\{y_1,y_2,y_3\} \geq0}\|y_1{\bf g}_1+y_2{\bf g}_2+y_3{\bf g}_3-{\bf b}\|
\end{align*}
is given by \begin{align}\label{r3}
\begin{cases}
y^*_3&=\frac{1}{\|{\bf g}_3\|^2}[{\bf b}^T{\bf g}_3-({\bf g}^T_3{\bf g}_2)p - ({\bf g}^T_3{\bf g}_1)\widetilde{p}]_{+}\\
y^*_2&=\frac{1}{\|{\bf g}_2\|^2}\left[{\bf b}^T{\bf g}_2-({\bf g}_3^T{\bf g}_2)y^*_3-{\bf g}^T_2{\bf g}_1\big[\frac{({\bf b}^T{\bf g}_1\|
                  {\bf g}_2\|^2-{\bf b}^T{\bf g}_2\cdot{\bf g}^T_2{\bf g}_1)- ({\bf g}^T_3{\bf g}_1\|{\bf g}_2\|^2
                   -{\bf g}^T_3{\bf g}_2\cdot{\bf g}^T_2{\bf g}_1)y^*_3}{\|{\bf g}_1\|^2\|{\bf g}_2\|^2-({\bf g}^T_2
                   {\bf g}_1)^2}\big]_{+}\right]_{+}\\
y^*_1&=\frac{1}{\|{\bf g}_1\|^2}[{\bf b}^T{\bf g}_1-({\bf g}^T_3{\bf g}_1)y^*_3-({\bf g}^T_2{\bf g}_1)y^*_2]_{+},
\end{cases}
\end{align}
where
\begin{eqnarray*}\label{q1}
p=\left[\frac{{\bf b}^T{\bf g}_2\cdot\|{\bf g}_3\|^2-{\bf b}^T{\bf g}_3\cdot{\bf g}^T_3{\bf g}_2}{\|{\bf g}_2\|^2\|{\bf g}_3\|^2
         -({\bf g}^T_3{\bf g}_2)^2}-\frac{{\bf g}^T_2{\bf g}_1\cdot\|{\bf g}_3\|^2-{\bf g}^T_3{\bf g}_2\cdot{\bf g}^T_3{\bf g}_1}{\|
          {\bf g}_2\|^2\|{\bf g}_3\|^2-({\bf g}^T_3{\bf g}_2)^2}\big[\frac{{\rm det}([{\bf b},{\bf g}_2,{\bf g}_3]^TG)}
           {{\rm det}(G^TG)}\big]_{+}\right]_{+},
\end{eqnarray*}
and
\begin{eqnarray*}\label{q2}
\widetilde{p}=\left[\frac{{\bf b}^T{\bf g}_1\cdot\|{\bf g}_3\|^2
                      -{\bf b}^T{\bf g}_3\cdot{\bf g}^T_3{\bf g}_1}{\|{\bf g}_1\|^2\|{\bf g}_3\|^2-({\bf g}^T_3{\bf g}_1)^2}
                      -\frac{{\bf g}^T_2{\bf g}_1\cdot\|{\bf g}_3\|^2-{\bf g}^T_3{\bf g}_2\cdot{\bf g}^T_3{\bf g}_1}{\|{\bf g}_1\|^2\|
                      {\bf g}_3\|^2-({\bf g}^T_3{\bf g}_1)^2}p\right]_{+}.
\end{eqnarray*}
\end{corollary}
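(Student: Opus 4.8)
The plan is to apply the recursion of Theorem~\ref{thm1} exactly once, with $k=2$. Write $G=\left[\begin{array}{cc} G_0 & {\bf g}_3\end{array}\right]$ with $G_0=\left[\begin{array}{cc} {\bf g}_1 & {\bf g}_2\end{array}\right]\in\mathbb{R}^{m\times 2}$, and take ${\bf g}_3$ in the role of the appended column ${\bf g}_{k+1}$. Since $rank(G)=3$, any two columns of $G$ are linearly independent, so $rank(G_0)=2$ and the hypotheses of Theorem~\ref{thm1} hold; moreover, exactly as in the proof of Theorem~\ref{thm1}, the deflated matrix $\widehat G_0:=G_0-\frac{{\bf g}_3{\bf g}_3^T}{\|{\bf g}_3\|^2}G_0=\left[\begin{array}{cc}\widehat{\bf g}_1 & \widehat{\bf g}_2\end{array}\right]$, with $\widehat{\bf g}_i:={\bf g}_i-\frac{{\bf g}_3^T{\bf g}_i}{\|{\bf g}_3\|^2}{\bf g}_3$, is of full column rank $2$. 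Theorem~\ref{thm1} then gives
\[
 y_3^*=\frac{1}{\|{\bf g}_3\|^2}\big[{\bf g}_3^T\big({\bf b}-G_0\,s(\widehat G_0,\widehat{\bf b})\big)\big]_+,\qquad
 \left[\begin{array}{c} y_1^*\\ y_2^*\end{array}\right]=s\big(G_0,\,{\bf b}-{\bf g}_3 y_3^*\big),
\]
where $\widehat{\bf b}:={\bf b}-\frac{{\bf g}_3{\bf g}_3^T}{\|{\bf g}_3\|^2}{\bf b}$. It remains only to evaluate the two rank-$2$ solutions on the right-hand side, for which Corollary~\ref{cor1} already supplies closed forms.

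For the second equation I would invoke Corollary~\ref{cor1} directly with coefficient matrix $G_0$ and right-hand side ${\bf b}-{\bf g}_3 y_3^*$, substituting $({\bf b}-{\bf g}_3 y_3^*)^T{\bf g}_i={\bf b}^T{\bf g}_i-({\bf g}_3^T{\bf g}_i)y_3^*$ for $i=1,2$; this reproduces the claimed expressions for $y_2^*$ and then $y_1^*$. For the first equation, Corollary~\ref{cor1} applied to the pair $(\widehat G_0,\widehat{\bf b})$ expresses $s(\widehat G_0,\widehat{\bf b})$ through the inner products $\|\widehat{\bf g}_1\|^2,\|\widehat{\bf g}_2\|^2,\widehat{\bf g}_1^T\widehat{\bf g}_2,\widehat{\bf b}^T\widehat{\bf g}_1,\widehat{\bf b}^T\widehat{\bf g}_2$, each of which simplifies via the projection identity $\widehat{\bf u}^T\widehat{\bf v}={\bf u}^T{\bf v}-\frac{({\bf g}_3^T{\bf u})({\bf g}_3^T{\bf v})}{\|{\bf g}_3\|^2}$; e.g. $\|\widehat{\bf g}_2\|^2=\frac{\|{\bf g}_2\|^2\|{\bf g}_3\|^2-({\bf g}_3^T{\bf g}_2)^2}{\|{\bf g}_3\|^2}$ and $\widehat{\bf b}^T\widehat{\bf g}_2=\frac{{\bf b}^T{\bf g}_2\|{\bf g}_3\|^2-{\bf b}^T{\bf g}_3\cdot{\bf g}_3^T{\bf g}_2}{\|{\bf g}_3\|^2}$. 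After these substitutions the common factors $\|{\bf g}_3\|^2$ cancel, and Corollary~\ref{cor1} yields $s(\widehat G_0,\widehat{\bf b})=\left[\begin{array}{c}\widetilde p\\ p\end{array}\right]$, where $p$ is the coefficient of $\widehat{\bf g}_2$ (computed first, carrying the doubly nested $[\cdot]_+$) and $\widetilde p$ the coefficient of $\widehat{\bf g}_1$, in precisely the forms displayed in the statement. Substituting $G_0\,s(\widehat G_0,\widehat{\bf b})={\bf g}_1\widetilde p+{\bf g}_2 p$ into the formula for $y_3^*$ above gives $y_3^*=\frac{1}{\|{\bf g}_3\|^2}[{\bf b}^T{\bf g}_3-({\bf g}_3^T{\bf g}_2)p-({\bf g}_3^T{\bf g}_1)\widetilde p]_+$, as claimed.

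The one genuinely nontrivial step, and the part I expect to be the main obstacle, is identifying the innermost bracketed quantity that Corollary~\ref{cor1} produces for $(\widehat G_0,\widehat{\bf b})$, namely
\[
 \left[\frac{\|\widehat{\bf g}_2\|^2\,\widehat{\bf b}^T\widehat{\bf g}_1-(\widehat{\bf b}^T\widehat{\bf g}_2)(\widehat{\bf g}_2^T\widehat{\bf g}_1)}{\|\widehat{\bf g}_1\|^2\|\widehat{\bf g}_2\|^2-(\widehat{\bf g}_1^T\widehat{\bf g}_2)^2}\right]_+
 =\left[\frac{\det\big([{\bf b},{\bf g}_2,{\bf g}_3]^T G\big)}{\det(G^T G)}\right]_+,
\]
i.e.\ recognizing that the ratio of $2\times 2$ projected Gram determinants equals the ratio of the two $3\times 3$ determinants appearing in \eqref{r3}. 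I would establish this by a Gram-determinant reduction: in each of $[{\bf b},{\bf g}_2,{\bf g}_3]^TG$ and $G^TG=[{\bf g}_1,{\bf g}_2,{\bf g}_3]^TG$, subtracting $\frac{{\bf g}_3^T{\bf g}_1}{\|{\bf g}_3\|^2}$ times the third column from the first and $\frac{{\bf g}_3^T{\bf g}_2}{\|{\bf g}_3\|^2}$ times the third column from the second turns the first two columns into inner products against $\widehat{\bf g}_1$ and $\widehat{\bf g}_2$, while the last row becomes $(0,0,\|{\bf g}_3\|^2)$; expanding along that row shows each $3\times 3$ determinant equals $\|{\bf g}_3\|^2$ times the corresponding $2\times 2$ projected Gram determinant, so the quotient is unchanged. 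With this identity in hand, assembling the pieces into $p$, $\widetilde p$, $y_3^*$, $y_2^*$, $y_1^*$ is routine algebra. Throughout, the denominators $\|{\bf g}_i\|^2$, $\|{\bf g}_i\|^2\|{\bf g}_j\|^2-({\bf g}_i^T{\bf g}_j)^2$, and $\det(G^TG)$ are all positive because $rank(G)=3$, so every application of Lemma~\ref{lem*} and Corollary~\ref{cor1} in the recursion is legitimate.
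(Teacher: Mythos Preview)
Your proposal is correct and follows essentially the same route as the paper: apply Theorem~\ref{thm1} once with $k=2$ and the appended column ${\bf g}_3$, compute the projected inner products $\widehat{\bf g}_i^T\widehat{\bf g}_j$ and $\widehat{\bf b}^T\widehat{\bf g}_i$, invoke Corollary~\ref{cor1} on $(\widehat G_0,\widehat{\bf b})$ to obtain $p$ and $\widetilde p$, substitute into the formula for $y_3^*$, and then apply Corollary~\ref{cor1} again to $(G_0,{\bf b}-{\bf g}_3 y_3^*)$ for $y_2^*$ and $y_1^*$. Your explicit Gram-determinant reduction for the identity $\frac{\|\widehat{\bf g}_2\|^2\widehat{\bf b}^T\widehat{\bf g}_1-(\widehat{\bf b}^T\widehat{\bf g}_2)(\widehat{\bf g}_2^T\widehat{\bf g}_1)}{\|\widehat{\bf g}_1\|^2\|\widehat{\bf g}_2\|^2-(\widehat{\bf g}_1^T\widehat{\bf g}_2)^2}=\frac{\det([{\bf b},{\bf g}_2,{\bf g}_3]^TG)}{\det(G^TG)}$ is in fact more detailed than the paper's own treatment, which simply asserts this equality in passing.
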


\begin{proof}
Let
$${\bf \widehat{b}}={\bf b}-\frac{{\bf g}_{3}{\bf g}^T_{3}}{\|{\bf g}_{3}\|^2}{\bf b},~~
{\bf \widehat{g}}_1={\bf g}_{1}-\frac{{\bf g}_{3}{\bf g}^T_{3}}{\|{\bf g}_{3}\|^2}{\bf g}_{1},~~
{\bf \widehat{g}}_2={\bf g}_{2}-\frac{{\bf g}_{3}{\bf g}^T_{3}}{\|{\bf g}_{3}\|^2}{\bf g}_{2},~~
 $$
A simple calculation yields that
\begin{align}
{\bf \widehat{g}}^T_2{\bf \widehat{g}}_1
&=({\bf g}_{2}-\frac{{\bf g}^T_{3}{\bf g}_{2}}{\|{\bf g}_{3}\|^2}{\bf g}_{3})^T({\bf g}_{1}
              -\frac{{\bf g}^T_{3}{\bf g}_{1}}{\|{\bf g}_{3}\|^2}{\bf g}_{3})
=\frac{({\bf g}^T_2{\bf g}_1)\|{\bf g}_3\|^2-({\bf g}^T_3{\bf g}_2)({\bf g}^T_3{\bf g}_1)}{\|{\bf g}_{3}\|^2},\label{n18}\\
\|{\bf \widehat{g}}_1\|^2
&=({\bf g}_{1}-\frac{{\bf g}^T_{3}{\bf g}_{1}}{\|{\bf g}_{3}\|^2}{\bf g}_{3})^T({\bf g}_{1}
              -\frac{{\bf g}^T_{3}{\bf g}_{1}}{\|{\bf g}_{3}\|^2}{\bf g}_{3})
=\frac{\|{\bf g}_1\|^2\|{\bf g}_3\|^2-({\bf g}^T_3{\bf g}_1)^2}{\|{\bf g}_{3}\|^2},\label{n21}\\
\|{\bf \widehat{g}}_2\|^2
&=({\bf g}_{2}-\frac{{\bf g}^T_{3}{\bf g}_{2}}{\|{\bf g}_{3}\|^2}{\bf g}_{3})^T({\bf g}_{2}
              -\frac{{\bf g}^T_{3}{\bf g}_{2}}{\|{\bf g}_{3}\|^2}{\bf g}_{3})
=\frac{\|{\bf g}_2\|^2\|{\bf g}_3\|^2-({\bf g}^T_3{\bf g}_2)^2}{\|{\bf g}_{3}\|^2},\label{n16}\\
{\bf \widehat{b}}^T{\bf \widehat{g}}_2
&=({\bf b}-\frac{{\bf g}^T_{3}{\bf b}}{\|{\bf g}_{3}\|^2}{\bf g}_{3})^T({\bf g}_{2}
         -\frac{{\bf g}^T_{3}{\bf g}_{2}}{\|{\bf g}_{3}\|^2}{\bf g}_{3})
=\frac{({\bf b}^T{\bf g}_2)\|{\bf g}_3\|^2-({\bf g}^T_3{\bf g}_2)({\bf g}^T_3{\bf b})}{\|{\bf g}_{3}\|^2},\label{n17}\\
{\bf \widehat{b}}^T{\bf \widehat{g}}_1
&=({\bf b}-\frac{{\bf g}^T_{3}{\bf b}}{\|{\bf g}_{3}\|^2}{\bf g}_{3})^T({\bf g}_{1}
          -\frac{{\bf g}^T_{3}{\bf g}_{1}}{\|{\bf g}_{3}\|^2}{\bf g}_{3})
=\frac{({\bf b}^T{\bf g}_1)\|{\bf g}_3\|^2-({\bf g}^T_3{\bf g}_1)({\bf g}^T_3{\bf b})}{\|{\bf g}_{3}\|^2}.\label{n19}
\end{align}
Let $s(\left[ \begin{array}{cc} {\bf {\widehat g}}_1 & {\bf {\widehat g}}_2 \end{array}\right], {\bf {\widehat b}})
     =\left[
     \begin{array}{c}
          s_1(\left[ \begin{array}{cc} {\bf {\widehat g}}_1 & {\bf {\widehat g}}_2 \end{array}\right], {\bf {\widehat b}}) \\
          s_2(\left[ \begin{array}{cc} {\bf {\widehat g}}_1 & {\bf {\widehat g}}_2 \end{array}\right], {\bf {\widehat b}}) \\
     \end{array}
     \right]$
be the solution to the optimization problem
\[ \min_{y_1,y_2\geq0}\|y_1{\bf {\widehat g}}_1+y_2{\bf {\widehat g}}_2-{\bf {\widehat b}}\|. \]
Then from \eqref{n18}--\eqref{n19} and Corollary \ref{cor1}, we have
\begin{align*}
s_2(\left[ \begin{array}{cc} {\bf \widehat{g}}_1 & {\bf \widehat{g}}_2\end{array}\right], {\bf \widehat{b}})
&=\frac{1}{\|{\bf \widehat{g}}_2\|^2}\left[{\bf \widehat{b}}^T{\bf \widehat{g}}_2
             -{\bf \widehat{g}}^T_2{\bf \widehat{g}}_1\left[\frac{\|{\bf \widehat{g}}_2\|^2{\bf \widehat{b}}^T{\bf \widehat{g}}_1
             -{\bf \widehat{b}}^T{\bf \widehat{g}}_2\cdot{\bf \widehat{g}}^T_2{\bf \widehat{g}}_1}{\|{\bf \widehat{g}}_1\|^2\|{\bf \widehat{g}}_2\|^2-({\bf \widehat{g}}_1^T{\bf \widehat{g}}_2)^2}\right]_{+}\right]_{+}\nonumber\\
&=\left[\frac{{\bf b}^T{\bf g}_2\cdot\|{\bf g}_3\|^2-{\bf b}^T{\bf g}_3\cdot{\bf g}^T_3{\bf g}_2}{\|{\bf g}_2\|^2\|{\bf g}_3\|^2
             -({\bf g}^T_3{\bf g}_2)^2}-\frac{{\bf g}^T_2{\bf g}_1\cdot\|{\bf g}_3\|^2-{\bf g}^T_3{\bf g}_2\cdot{\bf g}^T_3{\bf g}_1}
              {\|{\bf g}_2\|^2\|{\bf g}_3\|^2-({\bf g}^T_3{\bf g}_2)^2}\big[\frac{{\rm det}([{\bf b},{\bf g}_2,{\bf g}_3]^TG)}
               {{\rm det}(G^TG)}\big]_{+}\right]_{+} \nonumber \\
&=p.
\end{align*}
Moreover,
\begin{eqnarray*}
s_1(\left[ \begin{array}{cc} {\bf \widehat{g}}_1 & {\bf \widehat{g}}_2\end{array}\right], {\bf \widehat{b}})
&=& \left[\frac{{\bf b}^T{\bf g}_1\cdot\|{\bf g}_3\|^2-{\bf b}^T{\bf g}_3\cdot{\bf g}^T_3{\bf g}_1}{\|{\bf g}_1\|^2\|{\bf g}_3\|^2
            -({\bf g}^T_3{\bf g}_1)^2}-\frac{{\bf g}^T_2{\bf g}_1\cdot\|{\bf g}_3\|^2-{\bf g}^T_3{\bf g}_2\cdot{\bf g}^T_3{\bf g}_1}
               {\|{\bf g}_1\|^2\|{\bf g}_3\|^2-({\bf g}^T_3{\bf g}_1)^2}
s_2(\left[ \begin{array}{cc} {\bf \widehat{g}}_1 & {\bf \widehat{g}}_2\end{array}\right],
{\bf \widehat{b}})\right]_{+}\\
&=&\widetilde p.
\end{eqnarray*}
Therefore, according to Theorem \ref{thm1} and Corollary \ref{cor1}, the solution to the optimization problem
\[ \min_{\{y_1,y_2,y_3\}\geq0}\|y_1{\bf g}_1+y_2{\bf g}_2+y_3{\bf g}_3-{\bf b}\| \]
is
\begin{eqnarray*}
y^*_3 &=& \frac{1}{\|{\bf g}_{3}\|^2}\big[{\bf g}^T_{3}\big({\bf b}-[{\bf g}_1,{\bf g}_2]\cdot
s( \left[ \begin{array}{cc} {\bf \widehat{g}}_1 & {\bf \widehat{g}}_2\end{array}\right], {\bf \widehat{b}})\big)\big]_+ \\
&=&\frac{1}{\|{\bf g}_{3}\|^2}\big[{\bf g}^T_{3}{\bf b}-{\bf g}^T_{3}{\bf g}_1\cdot \widetilde{p}
-{\bf g}^T_{3}{\bf g}_2\cdot p\big]_+,
\end{eqnarray*}
with $s_1(\left[ \begin{array}{cc} {\bf \widehat{g}}_1 & {\bf \widehat{g}}_2\end{array}\right], {\bf \widehat{b}})=\widetilde{p}$, $s_2(\left[ \begin{array}{cc} {\bf \widehat{g}}_1 & {\bf \widehat{g}}_2\end{array}\right], {\bf \widehat{b}})=p$,
and
\begin{align*}
\begin{cases}
y^*_2&=s_2( \left[ \begin{array}{cc} {\bf g}_1 & {\bf g}_2 \end{array}\right],  {\bf b}-{\bf g}_{3}y^*_{3})\\
~&=\frac{1}{\|{\bf g}_2\|^2}\left[({\bf b}-{\bf g}_{3}y^*_{3})^T{\bf g}_2-{\bf g}^T_2{\bf g}_1\left[\frac{\|{\bf g}_2\|^2({\bf b}
       -{\bf g}_{3}y^*_{3})^T{\bf g}_1-({\bf b}-{\bf g}_{3}y^*_{3})^T{\bf g}_2\cdot{\bf g}^T_2{\bf g}_1}{\|{\bf g}_1\|^2\|{\bf g}_2\|^2
       -({\bf g}^T_2{\bf g}_1)^2}\right]_{+}\right]_{+}\\
~&=\frac{1}{\|{\bf g}_2\|^2}\left[({\bf b}-{\bf g}_3y^*_3)^T{\bf g}_2-{\bf g}^T_2{\bf g}_1\big[\frac{({\bf b}^T{\bf g}_1\|{\bf g}_2\|^2
       -{\bf b}^T{\bf g}_2\cdot{\bf g}^T_2{\bf g}_1)- ({\bf g}^T_3{\bf g}_1\|{\bf g}_2\|^2
       -{\bf g}^T_3{\bf g}_2\cdot{\bf g}^T_2{\bf g}_1)y^*_3}{\|{\bf g}_1\|^2\|{\bf g}_2\|^2
       -({\bf g}^T_2{\bf g}_1)^2}\big]_{+}\right]_{+}\\
y^*_1&=s_1(\left[ \begin{array}{cc} {\bf g}_1 & {\bf g}_2\end{array}\right], {\bf b}-{\bf g}_{3}y^*_{3})
=\frac{1}{\|{\bf g}_1\|^2}[({\bf b}-{\bf g}_{3}y^*_{3})^T{\bf g}_1-({\bf g}^T_2{\bf g}_1)y^*_2]_{+}.
\end{cases}
\end{align*}
Hence, Corollary \ref{cor2} is proved.
\end{proof}


\section{ARkNLS with $k=3$}\label{sec4}

Based on Theorem  \ref{thm1-coro} and Colloraries \ref{cor1} and \ref{cor2}, ARkNLS with $k=2$ and $k=3$ can produce practical
numerical methods for NMF. Although there is a closed-form solution for the \emph{rank-k NLS} for any $k$ including when $k\geq 4$, expansion of recursion to obtain a closed-form solution itself gets very complicated and the closed form solution becomes computationally messy. 
Therefore, we will focus on ARkNLS with $k=3$ in the rest of the paper since $k=2$ case can be easily derived in a similar way.
In addition, we will propose methods for handling the possible singularity problem for $k=1, 2$, and $3$.

In the following the closed-form solution of the \emph{rank-k NLS} problem (\ref{V}) with $k=3$ is derived first,
and then a strategy for avoiding rank deficient rank-k NLS in NMF iteration is provided. Finally these closed-form solutions 
and the proposed strategy lead to the efficient algorithm ARkNLS(k=3).

\subsection{The Closed-Form Solution of the \emph{rank-k NLS} Problem (\ref{V}) with $k=3$}

In this subsection, we derive an efficient algorithm to solve
the \emph{rank-k NLS} problem (\ref{V}) (solving the problem (\ref{U})
will be analogous) with $k=3$.
The efficient algorithm is derived generalizing the result
presented in Corollary \ref{cor2} to the case of NLS with multiple
right hand side vectors which are
$(A-\Sigma_{l\neq i}U_lV_l^T)$ in problem (\ref{V}) (and $(A-\Sigma_{l\neq i}U_lV_l^T)^T $in problem (\ref{U})), and exploiting
the special structure of this multiple right hand side vectors,
so that redundant computations are identified and avoided.

Assume $q=r/3$ is an integer and partition $U$ and $V$ into $q$ blocks as
follows:
\[   U= \left[ \begin{array}{ccc} U_1 & \cdots & U_q \end{array}\right], \qquad
                  V= \left[ \begin{array}{ccc} V_1 & \cdots & V_q \end{array}\right], \qquad
                   U_1, \cdots, U_q \in \mathbb{R}^{m\times 3}, \quad V_1, \cdots, V_q\in \mathbb{R}^{n\times 3}.
\]
For notational simplicity, we denote the three columns of the $i$th blocks $U_i$ and $V_i$ as follows, respectively,  without additional subscripts that corresponds to the columns of $U$ and $V$,
\[ U_i=\left[ \begin{array}{ccc} {\bf u}_1 & {\bf u}_2 & {\bf u}_{3}\end{array}\right] \quad \mbox{and} 
   \quad V_i=\left[ \begin{array}{ccc} {\bf v}_1 & {\bf v}_2 & {\bf v}_{3}\end{array}\right].
\]

\begin{theorem}\label{thm3}
Assume $U_i \in \mathbb{R}^{m \times 3}$ and $rank(U_i) = 3$.
Then the unique solution of the \emph{rank-3 NLS} problem
\begin{align}\label{Vi}
 V_i^*=\left[ \begin{array}{ccc} {\bf v}^*_1 & {\bf v}^*_2 & {\bf v}^*_{3}\end{array}\right]
 =\arg \min_{V_i\in R^{n\times 3},V_i\geq{\bf 0}}\|U_iV_i^T-(A-\Sigma_{l\neq i}U_lV_l^T)\|_F^2
\end{align}
is given by
\begin{align}\label{t2}
\begin{cases}
{\bf v}^*_{3}&=\left[{\bf v}_{3}+\frac{{\bf r}_{3}}{\|{\bf u}_{3}\|^2}+\frac{{\bf u}^T_{3}{\bf u}_{1}}{\|{\bf u}_{3}\|^2}({\bf v}_{1}
                  -{\bf \widetilde{p}})+\frac{{\bf u}^T_{3}{\bf u}_{2}}{\|{\bf u}_{3}\|^2}({\bf v}_{2}-{\bf p})\right]_{+},\\
{\bf v}^*_{2}&=\left[{\bf v}_{2}+\frac{{\bf r}_{2}}{\|{\bf u}_{2}\|^2}+\frac{{\bf u}^T_{2}{\bf u}_{1}}{\|{\bf u}_{2}\|^2}({\bf v}_{1}-{\bf z})
                  +\frac{{\bf u}^T_{3}{\bf u}_{2}}{\|{\bf u}_{2}\|^2}({\bf v}_{3}-{\bf v}^*_{3})\right]_{+},\\
{\bf v}^*_{1}&=\left[{\bf v}_{1}+\frac{{\bf r}_{1}}{\|{\bf u}_{1}\|^2}+\frac{{\bf u}^T_{2}{\bf u}_{1}}{\|{\bf u}_{1}\|^2}({\bf v}_{2}
                 -{\bf v}^*_{2})+\frac{{\bf u}^T_{3}{\bf u}_{1}}{\|{\bf u}_{1}\|^2}({\bf v}_{3}-{\bf v}^*_{3})\right]_{+},
\end{cases}
\end{align}
where
\begin{eqnarray}
&& \left[ \begin{array}{ccc} {\bf r}_1 & {\bf r}_2 & {\bf r}_{3}\end{array}\right] =  A^T U_i - V U^T U_i, \label{q32}\\
&& a={\bf u}^T_{2}{\bf u}_{1}\cdot\|{\bf u}_{3}\|^2-{\bf u}^T_{3}{\bf u}_{2}\cdot{\bf u}^T_{3}{\bf u}_{1},\label{q18}\\
&& b={\bf u}^T_{3}{\bf u}_{1}\cdot\|{\bf u}_{2}\|^2-{\bf u}^T_{3}{\bf u}_{2}\cdot{\bf u}^T_{2}{\bf u}_{1},\label{q19}\\
&& d_{12}=\|{\bf u}_{1}\|^2\|{\bf u}_{2}\|^2-({\bf u}^T_{2}{\bf u}_{1})^2,\label{q91}\\
&& d_{13}=\|{\bf u}_{1}\|^2\|{\bf u}_{3}\|^2-({\bf u}^T_{3}{\bf u}_{1})^2,\label{q37}\\
&& d_{23}=\|{\bf u}_{2}\|^2\|{\bf u}_{3}\|^2-({\bf u}^T_{3}{\bf u}_{2})^2,\label{q17}\\
&& {\bf p}=\left[{\bf v}_{2}+\frac{\|{\bf u}_{3}\|^2 {\bf r}_{2}-{\bf u}^T_{3}{\bf u}_{2}\cdot {\bf r}_{3}}{d_{23}}+\frac{a}{d_{23}}\left({\bf v}_{1}-\big[\frac{d_{23} {\bf r} _{1}-a {\bf r} _{2}-b {\bf r}_{3}}{{\rm det}( U_i^TU_i)}+{\bf v}_{1}\big]_{+}\right)\right]_{+},\label{q40}\\
&& {\bf \widetilde{p}}=\left[{\bf v}_{1}+\frac{\|{\bf u}_{3}\|^2 {\bf r}_{1}-{\bf u}^T_{3}{\bf u}_{1}\cdot {\bf r}_{3}}{d_{13}}
+\frac{a}{d_{13}}({\bf v}_{2}-{\bf p})\right]_{+},\label{q101}\\
&& {\bf z}=\left[{\bf v}_{1}+\frac{\|{\bf u}_{2}\|^2 {\bf r}_{1}-{\bf u}_{2}^T{\bf u}_{1}\cdot {\bf r}_{2}}{d_{12}}+\frac{b}{d_{12}}({\bf v}_{3}
                  -{\bf v}^*_{3})\right]_{+}.\label{n30}
\end{eqnarray}
\end{theorem}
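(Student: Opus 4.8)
The plan is to reduce the matrix \emph{rank-3 NLS} problem (\ref{Vi}) to $n$ decoupled single right-hand-side \emph{rank-3 NLS} problems, solve each one by Corollary \ref{cor2}, and then re-assemble the $n$ scalar solutions into the matrix--vector identities (\ref{t2}). Writing $B:=A-\Sigma_{l\neq i}U_lV_l^T$, the $j$-th column of $U_iV_i^T$ equals $U_i\,V_i(j,:)^T$, so (\ref{Vi}) decouples as $\min_{V_i(j,:)\geq{\bf 0}}\|U_i\,V_i(j,:)^T-B(:,j)\|^2$ for $j=1,\dots,n$. Since $rank(U_i)=3$, each single-vector problem has a unique solution by Lemma \ref{lem*}, and Cauchy--Schwarz gives $d_{12},d_{13},d_{23}>0$ and ${\rm det}(U_i^TU_i)>0$, so the closed form of Corollary \ref{cor2} is well defined; collecting the unique per-column solutions into the rows of $V_i^*$ then gives the unique solution of (\ref{Vi}).

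Next I would apply Corollary \ref{cor2} with $({\bf g}_1,{\bf g}_2,{\bf g}_3)=({\bf u}_1,{\bf u}_2,{\bf u}_3)$ and ${\bf b}=B(:,j)$, exploiting that every quantity there depends on ${\bf b}$ only through the three scalars ${\bf b}^T{\bf u}_1,{\bf b}^T{\bf u}_2,{\bf b}^T{\bf u}_3$ (the determinant ${\rm det}([{\bf b},{\bf u}_2,{\bf u}_3]^TU_i)$ that also occurs being itself linear in ${\bf b}$). Stacking ${\bf b}^T{\bf u}_l$ over $j$ produces the $l$-th column of $B^TU_i$, and since $UV^T=\Sigma_lU_lV_l^T$ we have $B^T=A^T-VU^T+V_iU_i^T$, hence
\[ B^TU_i=\left[\begin{array}{ccc}{\bf r}_1 & {\bf r}_2 & {\bf r}_3\end{array}\right]+V_i(U_i^TU_i),\qquad \left[\begin{array}{ccc}{\bf r}_1 & {\bf r}_2 & {\bf r}_3\end{array}\right]=A^TU_i-VU^TU_i, \]
so column-wise $B^T{\bf u}_l={\bf r}_l+\Sigma_{p=1}^{3}({\bf u}_p^T{\bf u}_l){\bf v}_p$. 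I would also record that a cofactor expansion along the first row gives ${\rm det}([{\bf b},{\bf u}_2,{\bf u}_3]^TU_i)=d_{23}({\bf b}^T{\bf u}_1)-a({\bf b}^T{\bf u}_2)-b({\bf b}^T{\bf u}_3)$ with $a,b$ as in (\ref{q18})--(\ref{q19}); substituting ${\bf b}={\bf u}_p$ into this identity returns ${\rm det}(U_i^TU_i)$ for $p=1$ and $0$ for $p=2,3$ (repeated rows), so after stacking the bracket $[{\rm det}([{\bf b},{\bf u}_2,{\bf u}_3]^TU_i)/{\rm det}(U_i^TU_i)]_+$ of Corollary \ref{cor2} becomes precisely the inner bracket $[(d_{23}{\bf r}_1-a{\bf r}_2-b{\bf r}_3)/{\rm det}(U_i^TU_i)+{\bf v}_1]_+$ appearing in (\ref{q40}).

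It then remains to substitute these identities line by line into the stacked Corollary \ref{cor2} formulas, keeping in mind that the $y_2^*$ and $y_1^*$ parts use the deflated right-hand sides ${\bf b}-{\bf g}_3y_3^*$ and ${\bf b}-{\bf g}_3y_3^*-{\bf g}_2y_2^*$, whose stacked inner products are $B^T{\bf u}_l-({\bf u}_3^T{\bf u}_l){\bf v}_3^*$ and $B^T{\bf u}_l-({\bf u}_3^T{\bf u}_l){\bf v}_3^*-({\bf u}_2^T{\bf u}_l){\bf v}_2^*$. In each line the term $\|{\bf u}_l\|^{-2}(B^T{\bf u}_l-\cdots)$ splits off a ${\bf v}_l$, a $\|{\bf u}_l\|^{-2}{\bf r}_l$, and cross terms $\|{\bf u}_l\|^{-2}({\bf u}_p^T{\bf u}_l){\bf v}_p$; the cross terms recombine with the corresponding leading terms to give exactly the differences ${\bf v}_1-\widetilde{\bf p}$, ${\bf v}_2-{\bf p}$, ${\bf v}_1-{\bf z}$, ${\bf v}_2-{\bf v}_2^*$, and ${\bf v}_3-{\bf v}_3^*$ in (\ref{t2}), while the scalars $p$, $\widetilde p$ and the $y_2^*$-bracket of Corollary \ref{cor2} stack, via the same Gram-coefficient cancellations, into the vectors ${\bf p},\widetilde{\bf p},{\bf z}$ of (\ref{q40})--(\ref{n30}). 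Because $[\cdot]_+$ acts entrywise, all the nested projections survive the stacking, and (\ref{t2}) follows.

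The conceptual part is short --- essentially Corollary \ref{cor2} applied in parallel to the $n$ columns, plus the single substitution $B^TU_i=[\,{\bf r}_1\ {\bf r}_2\ {\bf r}_3\,]+V_i(U_i^TU_i)$. The hard part will be the bookkeeping: reassembling the deeply nested scalar expressions of Corollary \ref{cor2} into matrix--vector form, verifying that every auxiliary quantity $a,b,d_{12},d_{13},d_{23},{\bf p},\widetilde{\bf p},{\bf z}$ matches after the substitution, and tracking the signs in the cofactor expansion of ${\rm det}([{\bf b},{\bf u}_2,{\bf u}_3]^TU_i)$. The payoff of the reorganization is that the entire update of $V_i$ is expressed through the single matrix product $A^TU_i-VU^TU_i$ together with the $3\times3$ Gram matrix $U_i^TU_i$, which is exactly what makes AR3NLS efficient.
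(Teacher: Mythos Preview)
Your proposal is correct and follows essentially the same route as the paper: both proofs apply Corollary~\ref{cor2} with $G=U_i$ and right-hand side $B=A-UV^T+U_iV_i^T$, and both hinge on the single substitution $B^T{\bf u}_l={\bf r}_l+\sum_{p}({\bf u}_p^T{\bf u}_l){\bf v}_p$ to rewrite every occurrence of ${\bf b}^T{\bf u}_l$; the paper simply carries out these substitutions directly in matrix--vector form (equations (\ref{c02})--(\ref{c2})) rather than first decoupling into $n$ scalar problems and then stacking, but the algebraic content is identical. Your cofactor observation for ${\rm det}([{\bf b},{\bf u}_2,{\bf u}_3]^TU_i)$ is exactly what the paper encodes in its equation (\ref{c2}).
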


\begin{proof} Recall that
\[   A-\Sigma_{l\neq i}U_lV_l^T=A-UV^T+U_{i}V_{i}^T. \]
Then we have
\begin{eqnarray}\label{c02}
(A-UV^T+U_{i}V_i^T)^T{\bf u}_{1}\nonumber\\
&=&A^T{\bf u}_{1}-VU^T{\bf u}_{1}+\|{\bf u}_{1}\|^2{\bf v}_{1}+{\bf u}^T_{2}{\bf u}_{1}\cdot{\bf v}_{2}
           +{\bf u}^T_{3}{\bf u}_{1}\cdot{\bf v}_{3}\nonumber\\
&=&{\bf r}_{1}+\|{\bf u}_{1}\|^2{\bf v}_{1}+{\bf u}^T_{2}{\bf u}_{1}\cdot{\bf v}_{2}+{\bf u}^T_{3}{\bf u}_{1}\cdot{\bf v}_{3},\\
(A-UV^T+U_{i}V_{i}^T)^T{\bf u}_{2} &=&{\bf r}_{2}+{\bf u}^T_{2}{\bf u}_{1}\cdot{\bf v}_{1}+\|{\bf u}_{2}\|^2{\bf v}_{2}
            +{\bf u}^T_{3}{\bf u}_{2}\cdot{\bf v}_{3},\\
(A-UV^T+U_{i}V_{i}^T)^T{\bf u}_{3}&=&{\bf r}_{3}+{\bf u}^T_{3}{\bf u}_{1}\cdot{\bf v}_{1}+{\bf u}^T_{3}{\bf u}_{2}\cdot{\bf v}_{2}
            +\|{\bf u}_{3}\|^2{\bf v}_{3},
\end{eqnarray}
where ${\bf r}_i$, for $i=1, 2, 3$, are defined in Eqn. \eqref{q32}.
In addition,
\begin{eqnarray}\label{c04}
(A-UV^T\hspace{-0.2in}&&+U_{i}V_{i}^T)^T{\bf u}_{2}\cdot\|{\bf u}_{3}\|^2-(A-UV^T+U_{i}V_{i}^T)^T{\bf u}_{3}\cdot{\bf u}^T_{3}{\bf u}_{2} \nonumber \\
&=&\|{\bf u}_{3}\|^2{\bf r}_{2}-{\bf u}^T_{3}{\bf u}_{2}{\bf r}_{3}+{\bf u}^T_{2}{\bf u}_{1}\|{\bf u}_{3}\|^2\cdot{\bf v}_{1}
       +\|{\bf u}_{2}\|^2\|{\bf u}_{3}\|^2\cdot{\bf v}_{2}+{\bf u}^T_{3}{\bf u}_{2}\|{\bf u}_{3}\|^2\cdot{\bf v}_{3} \nonumber \\
& & \qquad \qquad -{\bf u}^T_{3}{\bf u}_{1}\cdot{\bf u}^T_{3}{\bf u}_{2}\cdot{\bf v}_{1}-({\bf u}^T_{3}{\bf u}_{2})^2{\bf v}_{2}
       -\|{\bf u}_{3}\|^2{\bf u}^T_{3}{\bf u}_{2}\cdot{\bf v}_{3}    \nonumber\\
&=&\|{\bf u}_{3}\|^2{\bf r}_{2}-{\bf u}^T_{3}{\bf u}_{2}{\bf r}_{3}
       +({\bf u}^T_{2}{\bf u}_{1}\|{\bf u}_{3}\|^2-{\bf u}^T_{3}{\bf u}_{1}\cdot{\bf u}^T_{3}{\bf u}_{2}){\bf v}_{1}
       +(\|{\bf u}_{2}\|^2\|{\bf u}_{3}\|^2-({\bf u}^T_{3}{\bf u}_{2})^2){\bf v}_{2}\nonumber\\
&=&  d_{23}{\bf v}_{2}+\|{\bf u}_{3}\|^2{\bf r}_{2}-{\bf u}^T_{3}{\bf u}_{2}{\bf r}_{3}+a{\bf v}_{1},\\
(A-UV^T\hspace{-0.2in}&&+U_{i}V_{i}^T)^T{\bf u}_{1}\cdot\|{\bf u}_{3}\|^2-(A-UV^T+U_{i}V_{i}^T)^T{\bf u}_{3}\cdot{\bf u}^T_{3}{\bf u}_{1} \nonumber \\
&=& d_{13}{\bf v}_{1}+\|{\bf u}_{3}\|^2{\bf r}_{1}-{\bf u}^T_{3}{\bf u}_{1}{\bf r}_{3}+a{\bf v}_{2},\label{c05}\\
(A-UV^T\hspace{-0.2in}&&+U_{i}V_{i}^T)^T{\bf u}_{1}\|{\bf u}_{2}\|^2-(A-UV^T+U_{i}V_{i}^T)^T{\bf u}_{2}\cdot{\bf u}^T_{2}{\bf u}_{1} \nonumber \\
&=& d_{12}{\bf v}_{1}+\|{\bf u}_{2}\|^2{\bf r}_{1}-{\bf u}^T_{2}{\bf u}_{1}{\bf r}_{2}+b {\bf v}_{3},\label{c06}\\
(A-UV^T\hspace{-0.2in}&&+U_{i}V_{i}^T)^T{\bf u}_{1}\cdot d_{23}-(A-UV^T+U_{i}V_{i}^T)^T{\bf u}_{2}\cdot a-(A-UV^T
        +U_{i}V_{i})^T{\bf u}_{3}\cdot  b \nonumber \\
&=& d_{23}{\bf r}_{1}-a{\bf r}_{2}-b{\bf r}_{3}+{\rm det}\big(U_{i}^TU_{i}\big){\bf v}_{1},   \label{c2}
\end{eqnarray}
where $a, b, d_{23}, d_{13}$, and $d_{12}$ are defined in Eqns. \eqref{q18}--\eqref{q91}.
Also from Eqns. \eqref{c02}--\eqref{c2}, we have
\begin{eqnarray*}
&&{\bf p}=\left[\frac{(A-UV^T+U_{i}V_{i}^T)^T{\bf u}_{2}\cdot\|{\bf u}_{3}\|^2-(A-UV^T
                 +U_{i}V_{i}^T)^T{\bf u}_{3}\cdot{\bf u}^T_{3} {\bf u}_{2}}{d_{23}} \right.\\
& & \qquad -\left. \frac{a}{d_{23}}\big[\frac{(A-UV^T+U_{i}V_{i}^T)^T{\bf u}_{1}\cdot d_{23}-(A-UV^T+U_{i}V_{i}^T)^T{\bf u}_{2} \cdot a
           -(A-UV^T+U_{i}V_{i}^T)^T {\bf u}_{3}\cdot b}{{\rm det}(U_{i}^TU_{i})}\big]_{+}\right]_{+}\nonumber\\
&&~~~=\left[{\bf v}_{2}+\frac{\|{\bf u}_{3}\|^2{\bf r}_{2}-{\bf u}^T_{3}{\bf u}_{2}\cdot {\bf r}_{3}}{d_{23}}
           +\frac{a}{d_{23}}\left({\bf v}_{1}-\big[\frac{d_{23}{\bf r}_{1}-a{\bf r}_{2}-b{\bf r}_{3}}{{\rm det}(U_i^TU_i)}+{\bf v}_{1}\big]_{+}\right)\right]_{+},\\
&&{\bf \widetilde{p}}=\left[\frac{(A-UV^T+U_{i}V_{i}^T)^T{\bf u}_{1}\cdot\|{\bf u}_{3}\|^2
           -(A-UV^T+U_{i}V_{i}^T)^T{\bf u}_{3}\cdot{\bf u}^T_{3}{\bf u}_{1}}{d_{13}}-\frac{a}{d_{13}}{\bf p}\right]_{+}\nonumber\\
&&~~~=\left[{\bf v}_{1}+\frac{\|{\bf u}_{3}\|^2{\bf r}_{1}-{\bf u}^T_{3}{\bf u}_{1}\cdot {\bf r}_{3}}{d_{13}}
           +\frac{a}{d_{13}}({\bf v}_{2}-{\bf p})\right]_{+}.
\end{eqnarray*}
Then, according to Corollary \ref{cor2}, the solution ${\bf v}^*_{3}$ of Problem \eqref{V} is
\begin{align*}
{\bf v}^*_{3}&=\frac{1}{\|{\bf u}_{3}\|^2}[(A-UV^T+U_{i}V_{i}^T)^T{\bf u}_{3}-({\bf u}^T_{3}{\bf u}_{1}){\bf \widetilde{p}}
               -({\bf u}^T_{3}{\bf u}_{2}){\bf p}]_{+}\\
&=\left[{\bf v}_{3}+\frac{{\bf r}_{3}}{\|{\bf u}_{3}\|^2}+\frac{{\bf u}^T_{3}{\bf u}_{1}}{\|{\bf u}_{3}\|^2}({\bf v}_{1}
           -{\bf \widetilde{p}})+\frac{{\bf u}^T_{3}{\bf u}_{2}}{\|{\bf u}_{3}\|^2}({\bf v}_{2}-{\bf p})\right]_{+}.
\end{align*}
Furthermore, letting
\begin{align*}
{\bf z}&=\left[\frac{((A-UV^T+U_{i}V_{i}^T)^T{\bf u}_{1}\|{\bf u}_{2}\|^2
                 -(A-UV^T+U_{i}V_{i}^T)^T{\bf u}_{2}\cdot{\bf u}^T_{2}{\bf u}_{1})}{d_{12}}-\frac{b}{d_{12}}{\bf v}^*_{3}\right]_+\\
&=\left[{\bf v}_{1}+\frac{\|{\bf u}_{2}\|^2{\bf r}_{1}-{\bf u}_{2}^T{\bf u}_{1}\cdot {\bf r}_{2}}{d_{12}}
                 +\frac{b}{d_{12}}({\bf v}_{3}-{\bf v}^*_{3})\right]_{+},
\end{align*}
the solution ${\bf v}^*_{2}$ of Problem \eqref{V} is
\begin{align*}
{\bf v}^*_{2}&=\frac{1}{\|{\bf u}_{2}\|^2}\left[(A-UV^T+U_{i}V_{i}^T)^T{\bf u}_{2}
                      -({\bf u}_{3}^T{\bf u}_{2}){\bf v}^*_{3}-{\bf u}^T_{2} {\bf u}_{1}{\bf z}\right]_{+}\\
&=\left[{\bf v}_{2}+\frac{{\bf r}_{2}}{\|{\bf u}_{2}\|^2}+\frac{{\bf u}^T_{2}{\bf u}_{1}}{\|{\bf u}_{2}\|^2}({\bf v}_{1}
                 -{\bf z})+\frac{{\bf u}^T_{3}{\bf u}_{2}}{\|{\bf u}_{2}\|^2}({\bf v}_{3}-{\bf v}^*_{3})\right]_{+},
\end{align*}
and the solution ${\bf v}^*_{1}$ of Problem \eqref{V} is
\begin{align*}
{\bf v}^*_{1}&=\frac{1}{\|{\bf u}_{1}\|^2}[(A-UV^T+U_{i}V_{i}^T)^T{\bf u}_{1}
                      -({\bf u}^T_{3}{\bf u}_{1}){\bf v}^*_{3}-({\bf u}^T_{2}{\bf u}_{1}) {\bf v}^*_{2}]_{+}\\
&=\left[{\bf v}_{1}+\frac{{\bf r}_{1}}{\|{\bf u}_{1}\|^2}+\frac{{\bf u}^T_{2}{\bf u}_{1}}{\|{\bf u}_{1}\|^2}({\bf v}_{2}
                   -{\bf v}^*_{2})+\frac{{\bf u}^T_{3}{\bf u}_{1}}{\|{\bf u}_{1}\|^2}({\bf v}_{3}-{\bf v}^*_{3})\right]_{+}.
\end{align*}
\end{proof}

\begin{rem}\label{rank-deficient}
Theorem \ref{thm3} holds only when the matrix $U_i$ has full column rank.
When $rank(U_i) \neq 3$, then one or more of the following
values which occur in the denominators will be zero:\\
\begin{itemize}
\item
$\|{\bf u}_1\|$, $\|{\bf u}_2\|$, or $\|{\bf u}_3\|$ when a column of $U_i$ is zero,
\item  $d_{12}=\|{\bf u}_{1}\|^2\|{\bf u}_{3}\|^2-({\bf u}^T_{3}{\bf u}_{1})^2$,
$d_{13}=\|{\bf u}_{1}\|^2\|{\bf u}_{2}\|^2-({\bf u}^T_{2}{\bf u}_{1})^2$, or
$d_{23}=\|{\bf u}_{2}\|^2\|{\bf u}_{3}\|^2-({\bf u}^T_{3}{\bf u}_{2})^2$, 
when two of the columns of $U_i$ are linearly dependent,
\item ${\rm det}(U_i^TU_i)$.
\end{itemize}
\end{rem}
Any of the above cases will make some of the operations not valid.
In the next subsection, we propose a remedy to handle these cases.

\begin{rem}
Theorem \ref{thm3} is based on the assumption that  $r/3$ is an integer.
If $r/3$ is not an integer, the following methods can be used.
Let $q=[{r\over 3}]$ and denote
\[ \begin{cases}  V=\left[ \begin{array}{ccc} {\bf v}_1 & \cdots & {\bf v}_r \end{array}\right]=
             \left[ \begin{array}{cccc} V_1 & \cdots & V_q & {\bf v}_r \end{array}\right], \qquad \qquad\quad
                    V_1, \cdots, V_q\in \mathbb{R}^{n\times 3}, \
                               \quad {\rm if \ } r=3q+1, \\
                  V=\left[ \begin{array}{ccc} {\bf v}_1 & \cdots & {\bf v}_r \end{array}\right]=
                  \left[ \begin{array}{ccccc} V_1 & \cdots & V_q & {\bf v}_{r-1} & {\bf v}_r \end{array}\right], \qquad
                  V_1, \cdots, V_q\in \mathbb{R}^{n\times 3}, \
                  \quad {\rm if \ } r=3q+2,
    \end{cases}
\]
where $[x]$ means the nearest integer less than or equal to $x$. Then $V_1, \cdots, V_q$ are computed by Theorem \ref{thm3}.
For the computation of ${\bf v}_r$ when $r=3q+1$ or ${\bf v}_{r-1}$ and ${\bf v}_r$, there are two choices:
\begin{itemize}
\item[(a)] Let \[ V_{q+1}=\left[ \begin{array}{ccc} {\bf v}_{r-2} & {\bf v}_{r-1} & {\bf v}_r \end{array}\right]. \]
Then $V_{q+1}$ is computed by Theorem \ref{thm3};
\item[(b)] ${\bf v}_r$ is computed by HALS/RRI when $r=3q+1$, and ${\bf v}_{r-1}$ and ${\bf v}_r$ are computed when $r = 3q+2$ by ARkNLS with $k=2$
when $r=3q+2$.
\end{itemize}
\end{rem}
In our implementation, we adopted the choice (a) above due to uniformness of singularity checking. However, even for choice (b), we can easily implement the proposed method to avoid singularity which is discussed in the next section.

\subsection{Avoiding Rank Deficient ARkNLS in NMF Iteration}\label{singularity}

The low rank factor matrices $U$ and $V$ in NMF (\ref{nmf}) play important roles in applications.  For example,
in the blind source separation, the matrix $A$ stands for the observation matrix, the matrix $U$ plays the role of mixing
matrix and the matrix $V$ expresses source signals. In topic modeling \cite{DKDP}, where $A$ is a term-document matrix,
the normalized columns of $U$ can be interpreted as topics, and the corresponding columns of $V$ provides the topic distribution
for the documents. If the matrix $V$ has zero columns, this implies that some of source signals may be lost through the process.
 In addition, if any of the columns of these factor matrices are
computed as zeros, then not only the interpretation of the result becomes difficult but also the computed reduced
rank becomes lower than the pre specified reduced rank $r$ which may make the approximation less accurate.
More importantly, ARkNLS iteration assumes that the matrix $U_i$ or $V_i$ that
plays the role of the coefficient matrix in each iteration of NLS has full column rank, and when this is not the case, the algorithm will breakdown. Hence this singularity problem must be overcome for more meaningful solutions as well as for more robust algorithms.
This situation is well known especially in
HALS/RRI for NMF (\ref{nmf}) which is based on the \emph{rank-1}
residue iteration method
where a typical problem of NLS is
\begin{equation}
\min_{{\bf v}_i\in \mathbb{R}^{n\times 1},{\bf v}_i\geq{\bf 0}}\|{\bf u}_i{\bf v}_i^T-
(A-\Sigma_{l\neq i}{\bf u}_l{\bf v}_l^T)\|_F^2.
\end{equation}
The optimal solution vector ${\bf v}_i$ is given by
${\bf v}_i={ {[(A-\Sigma_{l\neq i}{\bf u}_l{\bf v}_l^T)^T {\bf u}_i]_{+}}\over { \|{\bf u}_i\|^2}}$ and thus
it  will be zero if $(A-\Sigma_{l\neq i}{\bf u}_l{\bf v}_l^T)^T {\bf u}_i\leq 0$,
and when this zero vector becomes the coefficient 'matrix' in a later step,
the iteration will break down.
When $k \geq 1$, due to an analogous rank deficiency of the coefficient matrix
$U_i$ or $V_i$, the results in Theorem \ref{thm3} cannot be applied to solve
the NLS problem (\ref{V}). When the NLS problem in the ARkNLS context involves
a rank deficient matrix, a  singularity problem.

Fortunately, in the context of NMF, we can modify the involved NLS problem during rank-k NLS iteration, to avoid such a singularity problem.
This is achieved by
adjusting the columns of $U_i$ and $V_i$ such that the ``new" $U_i$ and $V_i$ satisfy that
$U_i$ is of full column rank and  the value of
\[ U_iV_i^T={\bf u}_1{\bf v}_1^T+{\bf u}_2{\bf v}_2^T+{\bf u}_3{\bf v}_3^T \]
remains unchanged.
The proposed adjustment is motivated by the monotonicity property of our algorithm ARkNLS with $k=3$.
Denote the new
$U_i$ and $V_i$ by $\mathcal{U}_i$ and $\mathcal{V}_i$,
respectively, satisfying
$U_i V_i= \mathcal{U}_i\mathcal{V}_i$. Define $\mathcal{V}_i^*$ as
\[ \mathcal{V}_i^* =\arg \min_{\mathcal{V}\in \mathbb{R}^{n\times 3},\mathcal{V}\geq{\bf 0}}
     \|\mathcal{U}_i\mathcal{V}^T-(A-\sum_{l\neq i}U_lV_l^T)\|_F^2.
\]
Then we have
\[ \|   \mathcal{U}_{i} (\mathcal{V}_i^*)^T-(A-\sum_{l\neq i}U_lV_l^T)\|_F
     \leq \| \mathcal{U}_i \mathcal{V}_i^T-(A-\sum_{l\neq i}U_lV_l^T)\|_F
     =  \| U_i V_i^T-(A-\sum_{l\neq i}U_lV_l^T)\|_F, \]
which preserves the monotonicity property of our algorithm ARkNLS with $k=3$.

Assume that we are concerned with the blocks $U_i$ and $V_i$ in the iteration.
Let
\[ U_i=\left[ \begin{array}{ccc} {\bf u}_1 & {\bf u}_2 & {\bf u}_{3}\end{array}\right]\qquad \mbox{and} \qquad
   V_i=\left[ \begin{array}{ccc} {\bf v}_1 & {\bf v}_2 & {\bf v}_{3}\end{array}\right]. \]
Remark \ref{rank-deficient} in the previous subsection listed the cases when $U_i$ is rank deficient.
%
%
In the following, we show how the factors $U_i$ and $V_i$ can be adjusted
when $U_i$ is rank deficient.
Here, we are adjusting the matrix $U_i$ checking the linear independence of its columns from left and right, and adjust $V_i$ as needed.
\begin{itemize}
\item [Step~i)] In the first step, if ${\bf u}_1 =0$, then we replace this ${\bf u}_1$ with a nonzero vector while keeping the value of
${\bf u}_{1}{\bf v}^T_{1}$ unchanged.
Assume ${\bf u}_1 = 0$, and so ${\bf u}_1{\bf v}_1^T=0$.
Then set $ {\bf u}_{1}(3i-2)=1, \ {\bf v}_{1}={\bf 0}$. Since
the first column ${\bf u}_1$ of $U_i$ is the $(3i-2)$th column of $U$,
${\bf u}_{1}({3i-2})$ is the $(3i-2)$th element of ${\bf u}_{1}$.
Now elements of ${\bf u}_1$ are all zeros except it has only one nonzero element
${\bf u}_{1}(3i-2)=1$. Obviously, ${\bf u}_1\not= 0$ now and the value of
${\bf u}_{1}{\bf v}^T_{1}+{\bf u}_{2}{\bf v}^T_{2}+{\bf u}_{3}{\bf v}^T_{3}$ remains unchanged.


\item[Step~ii)] Now we have ${\bf u}_1\not= 0$.
But if ${\bf u}_1$ and ${\bf u}_2$ are linearly dependent, then we have
\[ {\bf u}_{2}=\alpha{\bf u}_{1}, \qquad {\bf u}_{1}{\bf v}^T_{1}+{\bf u}_{2}{\bf v}^T_{2}
={\bf u}_{1}({\bf v}^T_{1}+\alpha{\bf v}^T_{2})={\bf u}_{1}({\bf v}_{1}+\alpha{\bf v}_{2})^T, \]
where $\alpha=\frac{\|{\bf u}_2\|}{\|{\bf u}_1\|}$.
We set
\begin{align}\label{f90}
{\bf v}_{1}={\bf v}_{1}+\alpha{\bf v}_{2},~~ {\bf v}_{2}={\bf 0},~~ {\rm and}~~{\bf u}_{2}={\bf 0}.
\end{align}
Note that the second column ${\bf u}_2$ of $U_i$ is the $(3i-1)$th column of $U$,
we further adjust ${\bf u}_{2}$ to ensure ${\bf u}_{1}$ and ${\bf u}_{2}$ are linearly independent:
\begin{equation} \label{case2-1}
 {\rm if \ }  {\bf u}_{1}(3i-2)\neq 0, {\rm \ set \ }
 {\bf u}_{2}(3i-1)=1; 
\end{equation}
\begin{equation}\label{case2-2}
{\rm otherwise, \ set \ }  {\bf u}_{2}(3i-2)=1.
\end{equation}
Clearly, the adjusted ${\bf u}_{1}$ and ${\bf u}_{2}$ are linearly independent and
${\bf u}_{1}{\bf v}^T_{1}+{\bf u}_{2}{\bf v}^T_{2}$ remains unchanged,
and therefore
${\bf u}_{1}{\bf v}^T_{1}+{\bf u}_{2}{\bf v}^T_{2}+{\bf u}_{3}{\bf v}^T_{3}$ remains unchanged.

\item[Step~iii)] Now ${\bf u}_1 $ and ${\bf u}_2$ are linearly independent.
Finally, if ${\bf u}_1$, ${\bf u}_2$ and  ${\bf u}_3$ are linearly dependent in the following way:
\begin{equation}\label{rank-con1}  {\rm rank}(\left[ \begin{array}{cc} {\bf u}_1 & {\bf u}_2 \end{array}\right])
   ={\rm rank}(\left[ \begin{array}{ccc} {\bf u}_1 & {\bf u}_2 & {\bf u}_3 \end{array}\right])=2,
\end{equation}
then we have
\[ {\bf u}_{3}=\widetilde{\alpha}{\bf u}_{1}+\widetilde{\beta}{\bf u}_{2}, \]
where
\begin{eqnarray}\label{z1}
\widetilde{\alpha}=\frac{{\rm det}(\left[ \begin{array}{cc} {\bf u}_1 & {\bf u}_2 \end{array}\right]^T
                                    \left[ \begin{array}{cc} {\bf u}_3 & {\bf u}_2 \end{array}\right])}
                        {{\rm det}(\left[ \begin{array}{cc} {\bf u}_1 & {\bf u}_2 \end{array}\right]^T
                                    \left[ \begin{array}{cc} {\bf u}_1 & {\bf u}_2 \end{array}\right]) }
\end{eqnarray}
and
\begin{eqnarray}\label{z2}
\widetilde{\beta}=\frac{{\rm det}(\left[ \begin{array}{cc} {\bf u}_1 & {\bf u}_2 \end{array}\right]^T
                                    \left[ \begin{array}{cc} {\bf u}_1 & {\bf u}_3 \end{array}\right])}
                        {{\rm det}(\left[ \begin{array}{cc} {\bf u}_1 & {\bf u}_2 \end{array}\right]^T
                                    \left[ \begin{array}{cc} {\bf u}_1 & {\bf u}_2 \end{array}\right]) }.
\end{eqnarray}
Actually, $\widetilde{\alpha}$ and $\widetilde{\beta}$ cannot be both negative as ${\bf u}_1$, ${\bf u}_2$ and ${\bf u}_{3}$ are all
nonnegative, ${\bf u}_1\not=0$ and ${\bf u}_2\not=0$. But when only one of them is negative,
we need to permute the index list $\left[ \begin{array}{ccc} 1 & 2 & 3 \end{array}\right]$ to the list
$\mathcal{I}=\left[ \begin{array}{ccc} \mathcal{I}(1) & \mathcal{I}(2) & \mathcal{I}(3) \end{array}\right]$
and change the values of $\widetilde{\alpha}$ and $\widetilde{\beta}$ such that
\[ {\bf u}_{\mathcal{I}(3)}=\widetilde{\alpha} {\bf u}_{\mathcal{I}(1)}+\widetilde{\beta} {\bf u}_{\mathcal{I}(2)},
   \quad \widetilde{\alpha}\geq 0, \quad \widetilde{\beta}\geq 0. \]
At the same time, ${\bf u}_{\mathcal{I}(1)}$ and ${\bf u}_{\mathcal{I}(2)}$ must be linearly independent. So we
can adjust ${\bf v}_1$, ${\bf v}_2$ and ${\bf v}_3$ easily and meanwhile we only need to
adjust ${\bf u}_{\mathcal{I}(3)}$ such that the adjusted ${\bf u}_1$, ${\bf u}_2$ and ${\bf u}_3$ are linearly independent,
and the value of $U_iV_i^T={\bf u}_1{\bf v}_1+{\bf u}_2{\bf v}_2+{\bf u}_3{\bf v}_3$ remains unchanged.
The permuted index list $\mathcal{I}=\left[ \begin{array}{ccc} \mathcal{I}(1) & \mathcal{I}(2) & \mathcal{I}(3) \end{array}\right]$
can be obtained as follows where $\mathcal{J}$ denotes the index list in the entire matrix $U$ or $V$:
\begin{itemize}
\item if $\widetilde{\alpha}\widetilde{\beta}\geq 0$,
then $\widetilde{\alpha}\geq 0$, $\widetilde{\beta}\geq 0$, and
${\bf u}_{3}=\widetilde{\alpha}{\bf u}_{1}+\widetilde{\beta}{\bf u}_{2}$. Let
\[ \mathcal{I}=\left[ \begin{array}{ccc} 1 & 2 & 3\end{array}\right], \quad
   \mathcal{J}=\left[ \begin{array}{ccc} 3i-2 & 3i-1 & 3i \end{array}\right]; \]
\item if $\widetilde{\alpha}<0$, $\widetilde{\beta}>0$, then ${\bf u}_{2}
          =-\frac{\widetilde{\alpha}}{\widetilde{\beta}}{\bf u}_{1}+\frac{1}{\widetilde{\beta}}{\bf u}_{3}$
with $-\frac{\widetilde{\alpha}}{\widetilde{\beta}}>0$ and $\frac{1}{\widetilde{\beta}}>0$.
Let
\[ \widetilde{\alpha}={-\widetilde{\alpha}}/{\widetilde{\beta}}>0,
    ~~~\widetilde{\beta}={1}/{\widetilde{\beta}}>0,~~~
    \mathcal{I}=\left[ \begin{array}{ccc} 1 & 3 & 2\end{array}\right], \quad
    \mathcal{J}=\left[ \begin{array}{ccc} 3i-2 & 3i & 3i-1\end{array}\right]; \]
\item if $\widetilde{\alpha}>0$, $\widetilde{\beta}<0$, then
       ${\bf u}_{1}=-\frac{\widetilde{\beta}}{\widetilde{\alpha}}{\bf u}_{2}+\frac{1}{\widetilde{\alpha}}{\bf u}_{3}$
       with $-\frac{\widetilde{\beta}}{\widetilde{\alpha}}>0$ and $\frac{1}{\widetilde{\alpha}}>0$.
Let
\[ \widetilde{\alpha}=-{\widetilde{\beta}}/{\widetilde{\alpha}}>0,~~~
     \widetilde{\beta}={1}/{\widetilde{\alpha}}>0,~~~
     \mathcal{I}=\left[ \begin{array}{ccc} 2 & 3 & 1\end{array}\right], \quad
     \mathcal{J}=\left[ \begin{array}{ccc} 3i-1 & 3i & 3i-2\end{array}\right].\]
\end{itemize}
Now we have
\[ {\bf u}_{\mathcal{I}(3)}=\widetilde{\alpha} {\bf u}_{\mathcal{I}(1)}+\widetilde{\beta} {\bf u}_{\mathcal{I}(2)},
   \quad \widetilde{\alpha}\geq 0, \ \widetilde{\beta}\geq 0, \]
and ${\bf u}_{\mathcal{I}(j)}$ is the $\mathcal{J}(j)$-th column of $U$, $j=1, 2, 3$.
Moreover, we also have
\begin{eqnarray*}
 {\bf u}_1{\bf v}_1^T+ {\bf u}_2{\bf v}_2^T+ {\bf u}_3{\bf v}_3^T
  &=& {\bf u}_{\mathcal{I}(1)} {\bf v}^T_{\mathcal{I}(1)}+
   {\bf u}_{\mathcal{I}(2)}{\bf v}^T_{\mathcal{I}(2)} +
   {\bf u}_{\mathcal{I}(3)} {\bf v}^T_{\mathcal{I}(3)}  \\
   &=& {\bf u}_{\mathcal{I}(1)}( {\bf v}_{\mathcal{I}(1)} + \widetilde{\alpha} {\bf v}_{\mathcal{I}(3)})^T
   +{\bf u}_{\mathcal{I}(2)}( {\bf v}_{\mathcal{I}(2)} + \widetilde{\beta} {\bf v}_{\mathcal{I}(3)})^T.
\end{eqnarray*}
Thus, we set
\[ {\bf v}_{\mathcal{I}(1)}={\bf v}_{\mathcal{I}(1)}+\widetilde{\alpha}{\bf v}_{\mathcal{I}(3)},~~~
           {\bf v}_{\mathcal{I}(2)}={\bf v}_{\mathcal{I}(2)}+\widetilde{\beta}{\bf v}_{\mathcal{I}(3)},
           ~~~{\bf v}_{\mathcal{I}(3)}={\bf 0},~~~{\bf u}_{\mathcal{I}(3)}={\bf 0}.  \]
then ${\bf v}_{\mathcal{I}(1)}$ and ${\bf v}_{\mathcal{I}(2)}$ are both nonnegative vectors,
${\bf u}_{\mathcal{I}(1)}$ and ${\bf u}_{\mathcal{I}(2)}$ are linearly independent, and the value of
${\bf u}_{1}{\bf v}^T_{1}+{\bf u}_{2}{\bf v}^T_{2}+{\bf u}_{3}{\bf v}^T_{3}$ remains unchanged.

Now we adjust ${\bf u}_{\mathcal{I}(3)}$ so that ${\bf u}_{1},{\bf u}_{2}$, ${\bf u}_{3}$ are linearly independent
and the value of ${\bf u}_{1}{\bf v}^T_{1}+{\bf u}_{2}{\bf v}^T_{2}+{\bf u}_{3}{\bf v}^T_{3}$ remains unchanged:
\begin{equation}\label{case3-1}
 {\rm If \ } {\bf u}_{\mathcal{I}(1)}(\mathcal{J}(1)){\bf u}_{\mathcal{I}(2)}(\mathcal{J}(2))
     -{\bf u}_{\mathcal{I}(1)}(\mathcal{J}(2)){\bf u}_{\mathcal{I}(2)}(\mathcal{J}(1))\neq 0,
     {\rm \ set \ }
         {\bf u}_{\mathcal{I}(3)}(\mathcal{J}(3))=1;
\end{equation}
\begin{equation}\label{case3-2}
{\rm  Else,\  if \ } {\bf u}_{\mathcal{I}(1)}(\mathcal{J}(1))+{\bf u}_{\mathcal{I}(2)}(\mathcal{J}(1))=0,
 {\rm \ set \ }
     {\bf u}_{\mathcal{I}(3)}(\mathcal{J}(1))=1;
\end{equation}
\begin{equation}\label{case3-3}
\quad {\rm else \ if \ }  {\bf u}_{\mathcal{I}(1)}(\mathcal{J}(1))+{\bf u}_{\mathcal{I}(2)}(\mathcal{J}(1))\neq 0,
   {\rm \ set \ }
 {\bf u}_{\mathcal{I}(3)}(\mathcal{J}(2))=1.
\end{equation}
\end{itemize}

Through the above three steps, $U_i$ is of full column rank and so Theorem \ref{thm3} can be applied.

\subsection{Algorithm ARkNLS with $k=3$}
Denote
\[    H=A^TU_i,  \qquad  M=U^TU. \]
Note that for solving (\ref{Vi}) via \eqref{t2}, only two matrix-matrix products $H$ and $M$ are needed, and
all of ${\bf r}_{1}$, ${\bf r}_{2}$, ${\bf r}_{3}$, $a$, $b$, $d_{12}$, $d_{13}$, and $d_{23}$ can be invoked via $H$ and $M$
as shown in Theorem \ref{thm3}. Moreover, the singularity problem discussed in the subsection above can be detected via
matrices $H$ and $M$ which will have to be computed anyway for other parts of the computation.

In the following we illustrate the implementation of adjustment of vectors ${\bf u}_1$, ${\bf u}_2$ and ${\bf u}_3$
first and then present Algorithm ARkNLS(k=3).

\begin{itemize}
\item In Step i), whether ${\bf u}_1 = 0$ can be checked by checking whether
${\bf u}_1^T {\bf u}_1=M(3i-2, 3i-2)$
is zero. After ${\bf u}_1$ is adjusted, the elements of $H$ and $M$ should also be adjusted:
\[ H(:,1)=A(3i-2,:)^T,~~~M(:, 3i-2)=U(3i-2,:)^T, ~~~M(3i-2, :)=U(3i-2, :).  \]

\item In Step ii), the linear dependence of ${\bf u}_1$ and ${\bf u}_2$ can be checked by checking whether
${\rm det}(\left[ \begin{array}{cc} {\bf u}_1 & {\bf u}_2 \end{array}\right]^T
              \left[ \begin{array}{cc} {\bf u}_1 & {\bf u}_2 \end{array}\right])={\rm det}(M(3i-2:3i-1, 3i-2:3i-1))$ is zero.
Moreover,
\[ \alpha=\frac{\|{\bf u}_2\|}{\|{\bf u}_1\|} =\sqrt{\frac{M(3i-1,3i-1)}{M(3i-2,3i-2)}}.
\]
Furthermore, After ${\bf u}_2$ is adjusted, the elements of $H$ and $M$ should also be adjusted:
\begin{itemize}
\item Corresponding to (\ref{case2-1}),
\[ H(:,2)=A(3i-1,:)^T, ~M(:, 3i-1)=U(3i-1,:)^T, ~M(3i-1,:)=U(3i-1, :); \]
\item Corresponding to (\ref{case2-2}),
\[ H(:,2)=A(3i-2,:)^T, ~M(:, 3i-1)=U(3i-2,:)^T, ~M(3i-1,:)=U(3i-2, :). \]
\end{itemize}

\item In Step iii), the condition (\ref{rank-con1}) can be checked from the relationship
\[ {\rm det}(\left[ \begin{array}{ccc} {\bf u}_1 & {\bf u}_2 & {\bf u}_3 \end{array}\right]^T
              \left[ \begin{array}{ccc} {\bf u}_1 & {\bf u}_2 & {\bf u}_3 \end{array}\right])=
      {\rm det}(M(3i-2:3i,3i-2:3i))=0.  \]
In addition,
\[
\widetilde{\alpha}=\frac{{\rm det}(\left[ \begin{array}{cc} {\bf u}_1 & {\bf u}_2 \end{array}\right]^T
                                    \left[ \begin{array}{cc} {\bf u}_3 & {\bf u}_2 \end{array}\right])}
                        {{\rm det}(\left[ \begin{array}{cc} {\bf u}_1 & {\bf u}_2 \end{array}\right]^T
                                    \left[ \begin{array}{cc} {\bf u}_1 & {\bf u}_2 \end{array}\right]) }
=\frac{M(3i-1,3i-1)M(3i-2,3i)-M(3i-1,3i)M(3i-2,3i-1)}{M(3i-1,3i-1)M(3i-2,3i-2)-(M(3i-1,3i-2))^2},
\]
and
\[
\widetilde{\beta}=\frac{{\rm det}(\left[ \begin{array}{cc} {\bf u}_1 & {\bf u}_2 \end{array}\right]^T
                                    \left[ \begin{array}{cc} {\bf u}_1 & {\bf u}_3 \end{array}\right])}
                        {{\rm det}(\left[ \begin{array}{cc} {\bf u}_1 & {\bf u}_2 \end{array}\right]^T
                                    \left[ \begin{array}{cc} {\bf u}_1 & {\bf u}_2 \end{array}\right]) }.
=\frac{M(3i-2,3i-2)M(3i-1,3i)-M(3i-1,3i-2)M(3i-2,3i)}{M(3i-1,3i-1)M(3i-2,3i-2)-(M(3i-1,3i-2))^2}.
\]
After ${\bf u}_{\mathcal{I}(3)}$ is adjusted, the elements of $H$ and $M$ should also be adjusted:
\begin{itemize}
\item Corresponding to (\ref{case3-1}),
\[     H(:,\mathcal{I}(3))=A(\mathcal{J}(3),:)^T,\quad
       M(:,\mathcal{J}(3))=U(\mathcal{J}(3),:)^T, \quad
       M(\mathcal{J}(3),:)=U(\mathcal{J}(3), :); \]

\item Corresponding to (\ref{case3-2}),
\[      H(:,\mathcal{I}(3))=A(\mathcal{J}(1),:)^T,\quad
        M(:,\mathcal{J}(3))=U(\mathcal{J}(1),:)^T, \quad
        M(\mathcal{J}(3),:)=U(\mathcal{J}(1), :);   \]

\item Corresponding to  (\ref{case3-3}),
\[      H(:,\mathcal{I}(3))=A(\mathcal{J}(2),:)^T, \quad
        M(:,\mathcal{J}(3))=U(\mathcal{J}(2),:)^T, \quad
        M(\mathcal{J}(3),:)=U(\mathcal{J}(2), :). \]
\end{itemize}

\end{itemize}

Theorem \ref{thm3} and discussions above lead to Algorithm-ARkNLS(k=3) for NMF problem, which is summarized in Algorithm \ref{alg:alg2}.

\begin{algorithm}
\caption{Alternating \emph{Rank-3 NLS} for NMF (ARkNLS(k=3))} \label{alg:alg2}
\begin{algorithmic}
\STATE 1.~Given a $m$-by-$n$ nonnegative matrix $A$. Initialize $U\in \mathbb{R}^{m\times r}$, $V\in \mathbb{R}^{n\times r}$
with nonnegative elements and normalized the columns of $U$. Let $q={\tt floor}(\frac{r}{3})$, where ${\tt floor}(\frac{r}{3})$
rounds $\frac{r}{3}$ to the nearest integer less than or equal to $\frac{r}{3}$
\STATE 2.~\textbf{Repeat}
\STATE 3.~\textbf{For} $i=1:q$ \textbf{do} \qquad \qquad\quad
                       \  ~\% $U_i=\left[ \begin{array}{ccc} {\bf u}_1 & {\bf u}_2 & {\bf u}_3 \end{array}\right]$,
                         ${\bf u}_1=U(:, 3i-2), ~{\bf u}_2=U(:, 3i-1), ~{\bf u}_3=U(:, 3i)$.
\STATE    \hskip 5cm  \%   $V_i=\left[ \begin{array}{ccc} {\bf v}_1 & {\bf v}_2 & {\bf v}_3 \end{array}\right]$,
                         ${\bf v}_1=V(:, 3i-2), ~{\bf v}_2=V(:, 3i-1), ~{\bf v}_3=V(:, 3i)$.

\STATE 4. Adjust the columns of $U_i$ and $V_i$ such that the adjusted $U_i$ is of full column rank and the value of $U_iV_i^T$
\STATE \qquad remains unchanged.
\STATE 5.~Compute $V_i$ for (\ref{Vi}) by  \eqref{t2}.
\STATE 6.~\textbf{End for}

\STATE 7.~\textbf{If} ${\tt mod}(r,3)=1 {\rm ~or~} 2$ \textbf{then} compute ${{\bf v}}_{r-2}$, ${{\bf v}}_{r-1}$, ${{\bf v}}_{r}$
               via the above Steps 4--5. \textbf{End if}
\STATE       \hskip 5cm \% ${\tt mod}(r,3)$ returns the remainder after division of $r$ by $3$.
\STATE
\STATE       \hskip 5cm \% In the case ${\tt mod}(r,3)=1$, ${\bf u}_{r-2}$ and ${\bf u}_{r-1}$ must be linearly independent
\STATE       \hskip 5cm \%   after Step 5, we only need to check whether ${\bf u}_{r-2}$, ${\bf u}_{r-1}$ and ${\bf u}_r$ are
\STATE       \hskip 5cm \%   linearly independent before  updating the last three columns of $V$.

\STATE
\STATE        \hskip 5cm \% In  the case ${\tt mod}(r,3)=2$, ${\bf u}_{r-2}\not= 0$ after Step 5, we only need to check
\STATE        \hskip 5cm \%   whether ${\bf u}_{r-2}$ and ${\bf u}_{r-1}$, and  ${\bf u}_{r-2}$, ${\bf u}_{r-1}$ and ${\bf u}_r$
                            are linearly independent
\STATE        \hskip 5cm \%   before updating the last three columns of $V$.
\STATE 8.~Replace $U$ with $V$ and $V$ with $U$, $A$ with $A^T$, repeat Steps 3--7.
\STATE 9.~\textbf{Until} a stopping criterion is satisfied
\end{algorithmic}
\end{algorithm}

\subsection{Computational complexity.}
\label{sec:itercomp}
We briefly discuss the per iteration computational complexity of~\cref{alg:alg2}. The matrix multiplications needed to compute $M = U^TU$ and $H = A^T U$ cost $2mr^2$ flops and $2mnr$ flops respectively. Adjusting the rank deficiency of $U$ requires only $\approx 6n$ of flops and data movement per $U_i$ to replace entries of $M$ and $H$. Solving for $V_i$ using~\cref{thm3} involves calculating $A^TU_i - VU^TU_i$ which can be computed using $H$ and $M$ using $7mr$ flops. Finally solving for $V_i$ involves a constant number of vector operations taking $\approx 50m$ flops. These solves are updated for every $V_i$ block giving us a total time for updating $V$ to be $2mnr + 2mr^2 + \frac{r}{3}(7mr+50m + 6n) = 2mnr + O(mr^2)$. Performing a similar analysis for updating $U$ we get the overall per iteration computational complexity of~\cref{alg:alg2} to be $4mnr + O\left((m+n)r^2\right)$ flops. If $r$ is small the computation is dominated by the matrix multiplication operations involving $A$.
\section{Numerical Experiments}\label{sec5}
In this section we provide numerical experiments on synthetic data sets and real world text and image data sets. All methods were implemented in MATLAB (version R2017a) and the experiments were conducted on a server with 2 Intel(R) Xeon(R) CPU ES-2680 v3 CPUs and 377GB RAM.
We compared the following algorithms for NMF.
\begin{enumerate}
    \item (ARk) ARkNLS with $k=3$, the method proposed in this paper.
    \item (HALS) Cichocki and Phan's hierarchical alternating least squares algorithm~\cite{CP}.
    \item (BPP) Kim and Park's block principal pivoting method~\cite{KP}.
    \item (RTRI) Liu and Zhou's rank-two residual iteration method~\cite{LZ}. 
\end{enumerate}
Prior work~\cite{KHP,CP,CZPA} has shown that HALS and BPP are two of the most effective methods for NMF. RTRI is an algorithm with a similar style as our proposed method and thus serve as good benchmark as well. We implemented the RTRI algorithm and utilized the BPP and HALS implementation provide by Kim et al.~\cite{KHP}. In all our experiments $A \in \mathbb{R}_+^{m \times n}$ refers to the input matrix with $r$ being the approximation rank.

\subsection{Data sets}
\label{sec:datasets}

\begin{table}[tb]
\caption{Data sets}\label{tbl:rwdata}
\begin{center}
\begin{tabular}{ccc}
\hline
Dataset & Size & Sparsity\\
\hline
    TDT2          & 36,771 $\times$ 9,394  & 99.65\%\\
    Reuters       & 18,933 $\times$ 8,293  & 99.75\%\\
    20Newsgroups  & 26,214 $\times$ 18,846 & 99.66\%\\
    ORL           & 10,304 $\times$ 400    & 0.01\%\\
    Facescurb     & 9,216  $\times$ 22,631 & 0.39\%\\
    YaleB         & 10,000 $\times$ 2,432  & 2.70\%\\
    Caltech256    & 9,216  $\times$ 30,607 & 1.00\%\\
\hline
\end{tabular}
\end{center}
\end{table}
We use 7 real-world data sets in our experiments. Three sparse text data sets~\footnote{The text data sets are available at \url{http://www.cad.zju.edu.cn/home/dengcai/}} TDT2, Reuters21578, and 20Newsgroups and 4 dense image datasets ORL, Facescurb, YaleB, and Caltech256.
A summary of their characteristics can be found in~\Cref{tbl:rwdata}. 
The text data is represented as a term-document matrix and images are represented as a vector of pixels. In addition, we test the methods on various synthetic data sets as described later in  \Cref{sec:synexp}. Detailed description of the real-world data sets are as follows:

\begin{enumerate}
    \item TDT2: The NIST Topic Detection and Tracking corpus consists of news articles collected during 1998 and taken from various sources including television programs, radio programs, and news wires. The articles are classified into 96 categories. Documents appearing in multiple categories are pruned leaving us with 9,394 documents and 30 classes of documents in total.
    \item Reuters: We use the ModApte version of the Reuters21578 corpus. It consists of articles appearing in the 1987 Reuters news wire. Retaining documents with only single labels leaves us with 8,293 documents in 65 categories.
    \item 20Newsgroups: It is a collection of newsgroup documents partitioned across 20 different categories with 18,846 documents.
    \item ORL\footnote{\url{https://github.com/fengbingchun/NN\_Test}}: AT\&T Laboratories Cambridge collected 400 facial images of 40 different people with different expressions and postures. Each image has $92\times112$ pixels resulting in matrix of dimension $10,304\times400$.
    \item Facescrub: This database contains 106,863 photos of 530 celebrities, 265 whom are male, and 265 female~\cite{NW}. The initial images that make up this dataset were procured using Google Image Search. Subsequently, they were processed using the Haarcascade-based face detector from OpenCV 2.4.7 on the images to obtain a set of faces for each celebrity name, with the requirement that a face must be at least $96\times 96$ pixels. In our experiment, we use photos from 256 male, and scale the images to $96\times 96$ pixels.
    \item YaleB~\footnote{\url{http://vision.ucsd.edu/~iskwak/ExtYaleDatabase/Yale\%20Face\%20Database.htm}}: 5,760 single light source images of 10 subjects were collected under 576 viewing conditions. The images have normal, sleepy, sad and surprising expressions. A subset of 38 persons with 64 images per people, i.e., 2,432 images are used in this paper. We scale the images to $100\times 100$ pixels each.
    \item Caltech256\footnote{\url{http://www.vision.caltech.edu/Image\_Datasets/Caltech256/}}: This corpus is a set of 256 object categories containing a total of 30,607 images. They were collected by choosing a set of object categories, downloading examples from Google Images and then manually screening out all images that did not fit the category. Images to $96\times 96$ pixels in our experiments.
\end{enumerate}

We also use synthetic data for additional benchmarks. The details of these sets can be found in~\Cref{sec:kselect,sec:sparse}.

\subsection{ARkNLS with $k=2$ vs $k=3$}
\label{sec:kselect}
\begin{figure}[tb]
\begin{subfigure}{.32\textwidth}
\centering\includegraphics[width=\linewidth]{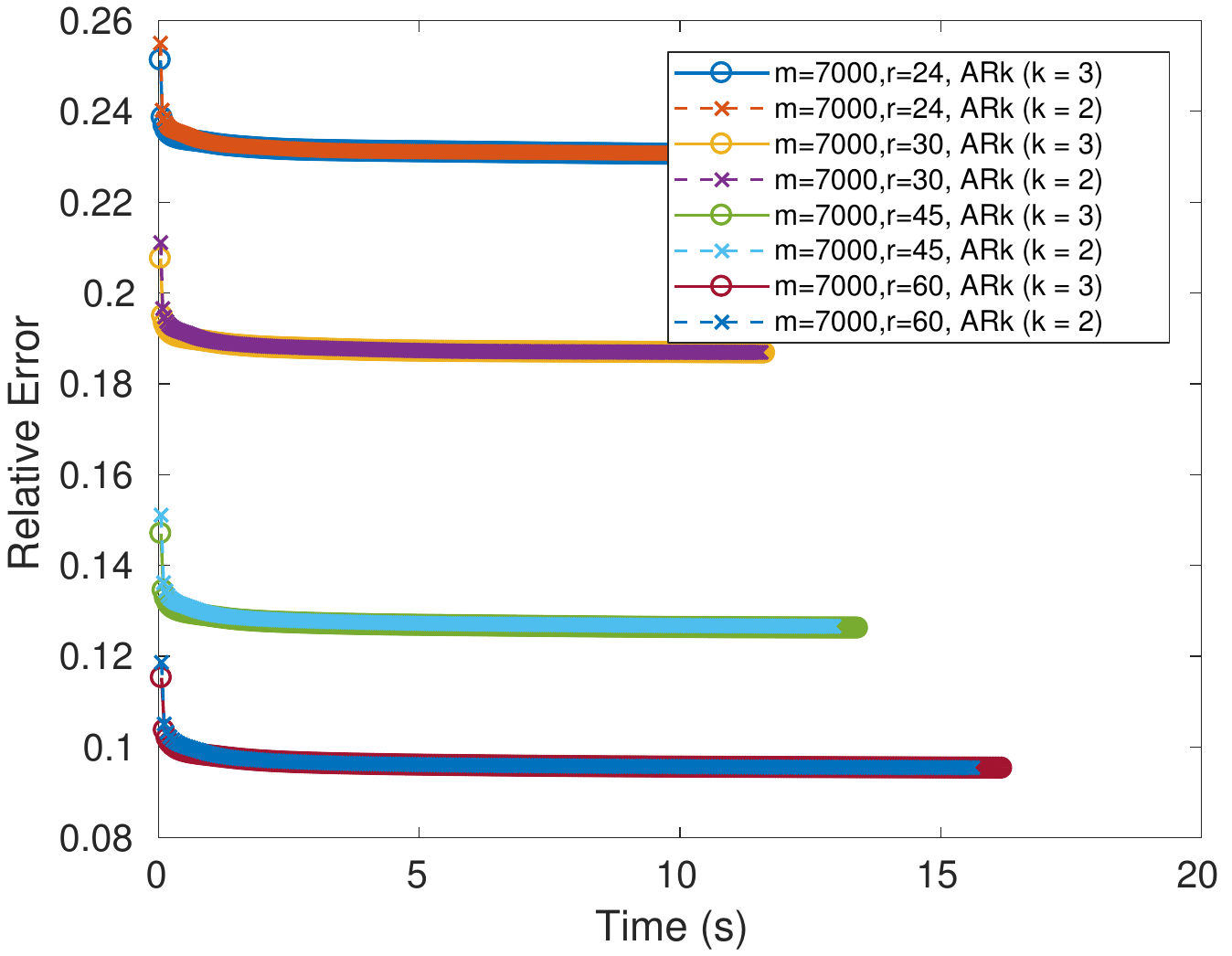}
\caption{Varying $r$ for $m = 7,000$}\label{fig:kselectallr}
\end{subfigure}\hfill
\begin{subfigure}{.32\textwidth}
\centering\includegraphics[width=\linewidth]{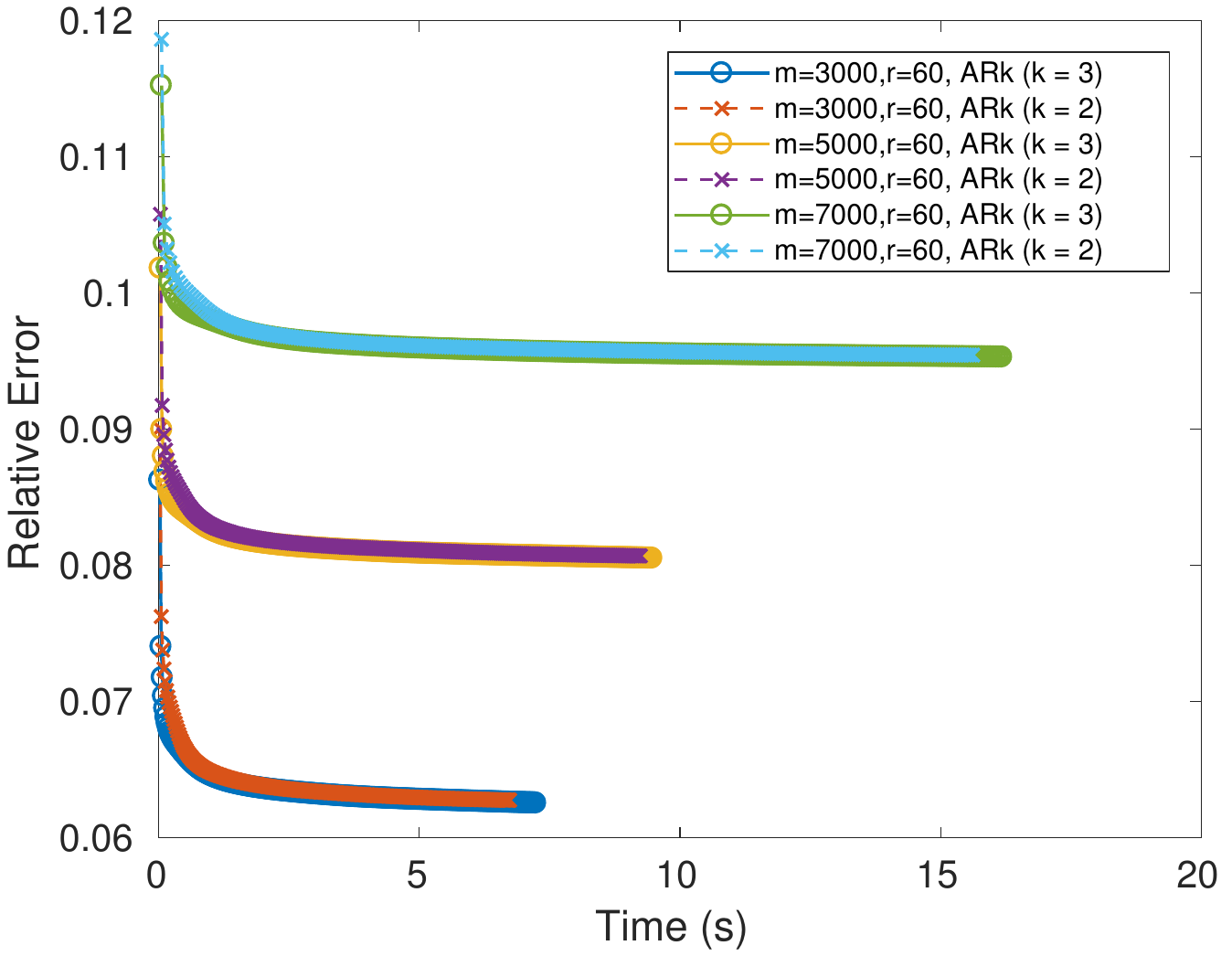}
\caption{Varying $m$ for $r=60$}\label{fig:kselectallm}
\end{subfigure}\hfill
\begin{subfigure}{.32\textwidth}
\centering\includegraphics[width=\linewidth]{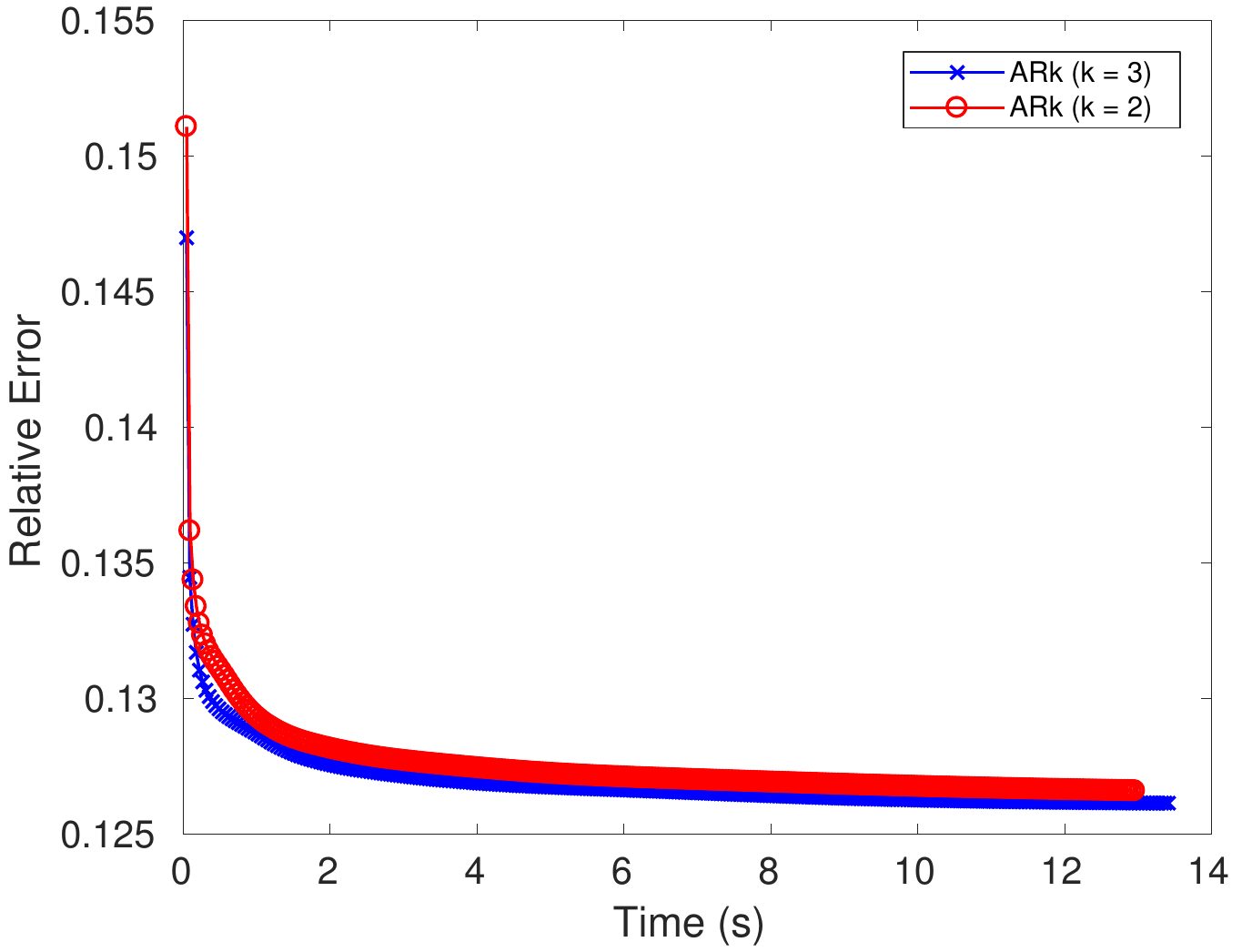}
\caption{$m=7,000$ and $r=45$}\label{fig:kselectcase}
\end{subfigure}
\caption{Synthetic experiments for selecting $k$ for ARk. Both $k=2$ and $k=3$ perform similarly with $k=3$ obtaining marginally better approximations in slightly longer time.}
\label{fig:kselect}
\end{figure}

As stated earlier, although ARkNLS can be developed for any integer $k$, we focus on 
the choices of $k=2$ and $k=3$ for efficiency of closed form solutions. We test the cases of ARkNLS for $k=2$ and $k=3$ on synthetic low-rank matrices. 
The synthetic matrices are created as $A = WH^T + N$ where $W \in \mathbb{R}_+^{m\times r}$ and $H \in \mathbb{R}_+^{n\times r}$ are random nonnegative matrices where the columns of $W$ have unit norm and 
$N \in \mathbb{R}^{m\times n}$ is random Gaussian matrix with 0 mean and 0.03 standard deviation. 
We ensure that $A$ is nonnegative by replacing negative values with 0. We fix the number of columns $n$ to 5,000 and vary the number of rows $m$. We run ARkNLS for 300 iterations of updating every column of $U$ and $V$. The results of these experiments can be seen in~\Cref{fig:kselect}.

From~\cref{fig:kselectallr,fig:kselectallm} we can see that the choices of $k$ do not affect the running time and convergence characteristics of ARkNLS too much. Relative residual is measured as $\lVert A-UV^T\rVert_F/\lVert A \rVert_F$. We show a particular case of the runs in~\cref{fig:kselectcase}. Here we can see that $k=3$ achieves slightly higher accuracy and runs at about the same time. This trend is true for all configurations of $m$ and $r$. This makes sense intuitively since $k=3$ updates one more column per block than $k=2$. The major computational bottleneck both variations comes from matrix multiplications involving $A$ (see~\cref{sec:itercomp}) and does not vary with $k$. Hence we focus only on the $k=3$ setting for the rest of our experiments.

\subsection{Experiments on Synthetic Data}
\label{sec:synexp}
The convergence behavior of the different NMF algorithms is compared on synthetic matrices in the following experiments.

\subsubsection{Experiments on Dense Synthetic Data}
The dense synthetic matrices are created in the same manner as described in~\Cref{sec:kselect}. Defining a stopping criteria for iterative algorithms like NMF is often a tricky task. Many options exist~\cite{KHP} but for comparison purposes in this section we run all algorithms for 100 iterations and observe their convergence behavior. We ran our tests with $n=15,000$ and vary $m$ from 5,000 to 30,000 in increments of 5,000. $r$ is fixed as one of 30,60, and 90. We split the input matrices into Short-Fat ($m<n$), Square ($m=n$), and the Tall-Skinny ($m>n$) cases. Only particular instances of each case is shown in~\Cref{fig:synsf,fig:synsq,fig:synts} with the results being similar in the other experiments. 

\begin{figure}[bt]
\begin{subfigure}{.32\textwidth}
\centering\includegraphics[width=\linewidth]{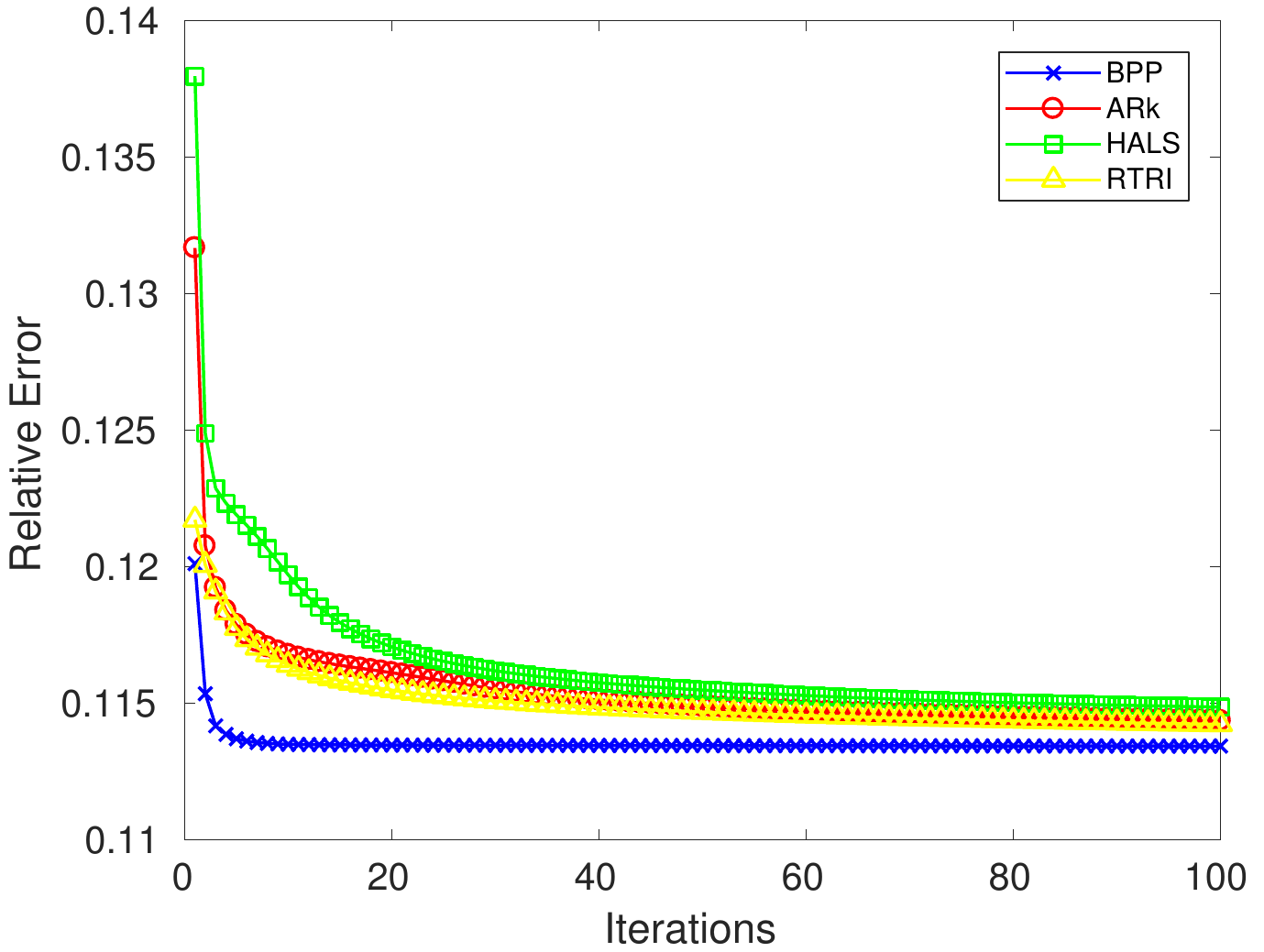}
\caption{Residual versus iterations}\label{fig:sfiters}
\end{subfigure}\hfill
\begin{subfigure}{.32\textwidth}
\centering\includegraphics[width=\linewidth]{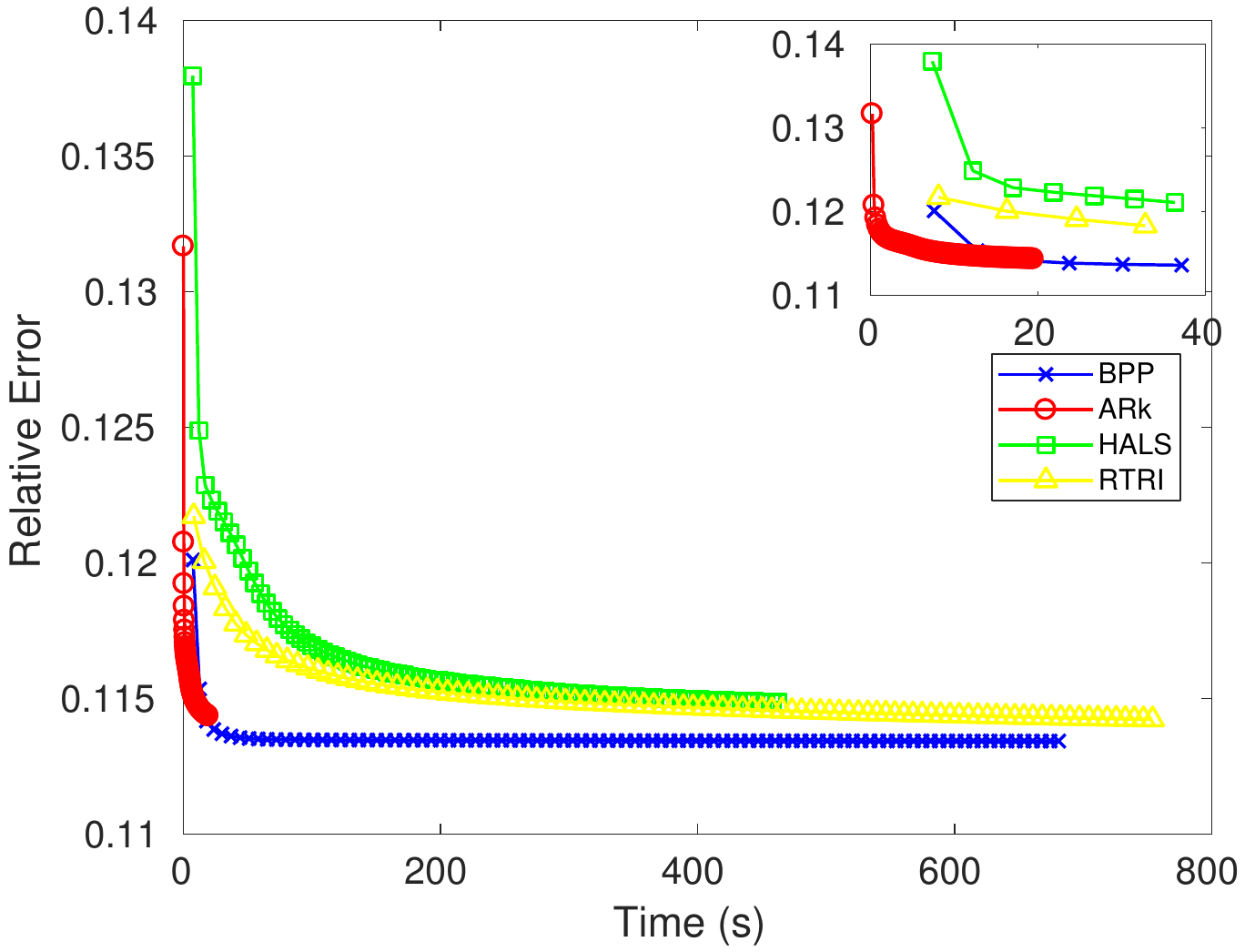}
\caption{Residual versus time}\label{fig:sftime}
\end{subfigure}\hfill
\begin{subfigure}{.32\textwidth}
\centering\includegraphics[width=\linewidth]{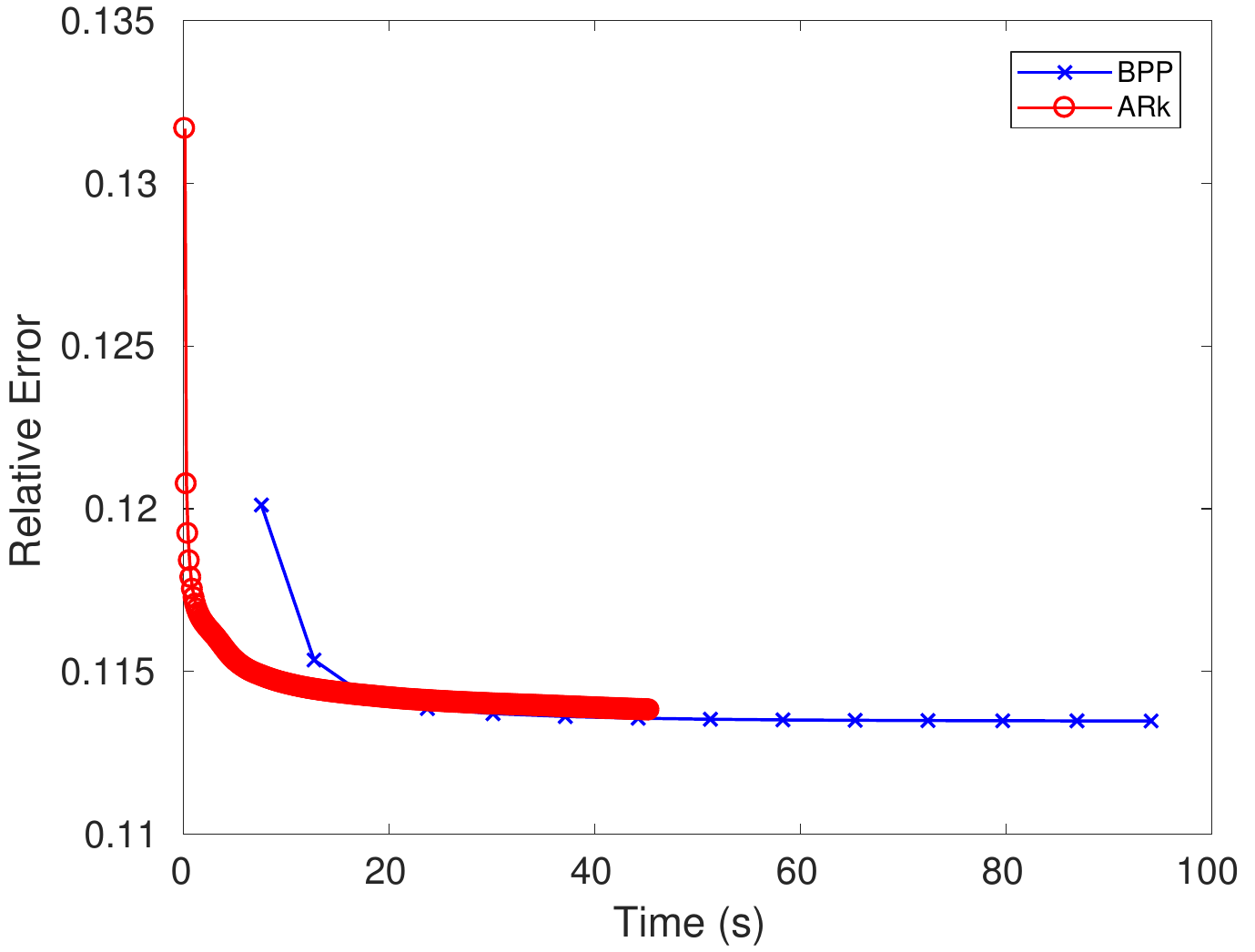}
\caption{Running ARk for 300 iterations}\label{fig:sfark}
\end{subfigure}
\caption{Short-Fat case ($ m < n$): $A \in \mathbb{R}_+^{10,000 \times 15,000}$ with $r=60$. BPP performs the best in terms of residual with ARk being next best and reaching similar residuals much faster than the other methods.}
\label{fig:synsf}
\end{figure}

\Cref{fig:synsf} shows the Short-Fat case with $m=10,000$ and $r=60$. BPP achieves the lowest residual while HALS, RTRI, and ARk perform slightly worse as seen in~\cref{fig:sfiters}.~\Cref{fig:sftime} shows the convergence with respect to time. 
All algorithms converge very quickly and BPP and ARk show the fastest drops in residual. It can be clearly seen that ARk is the most efficient of the algorithms often completing over half of its iterations before the others complete their first iteration. From~\Cref{fig:sfiters,fig:sftime} it looks like ARk might reach a lower residual if we allow it to run for a few more iterations and so allow it to run till 300 iterations in~\Cref{fig:sfark} and compare it to BPP. BPP is still better but the difference is marginal.  

\begin{figure}[tb]
\begin{subfigure}{.32\textwidth}
\centering\includegraphics[width=\linewidth]{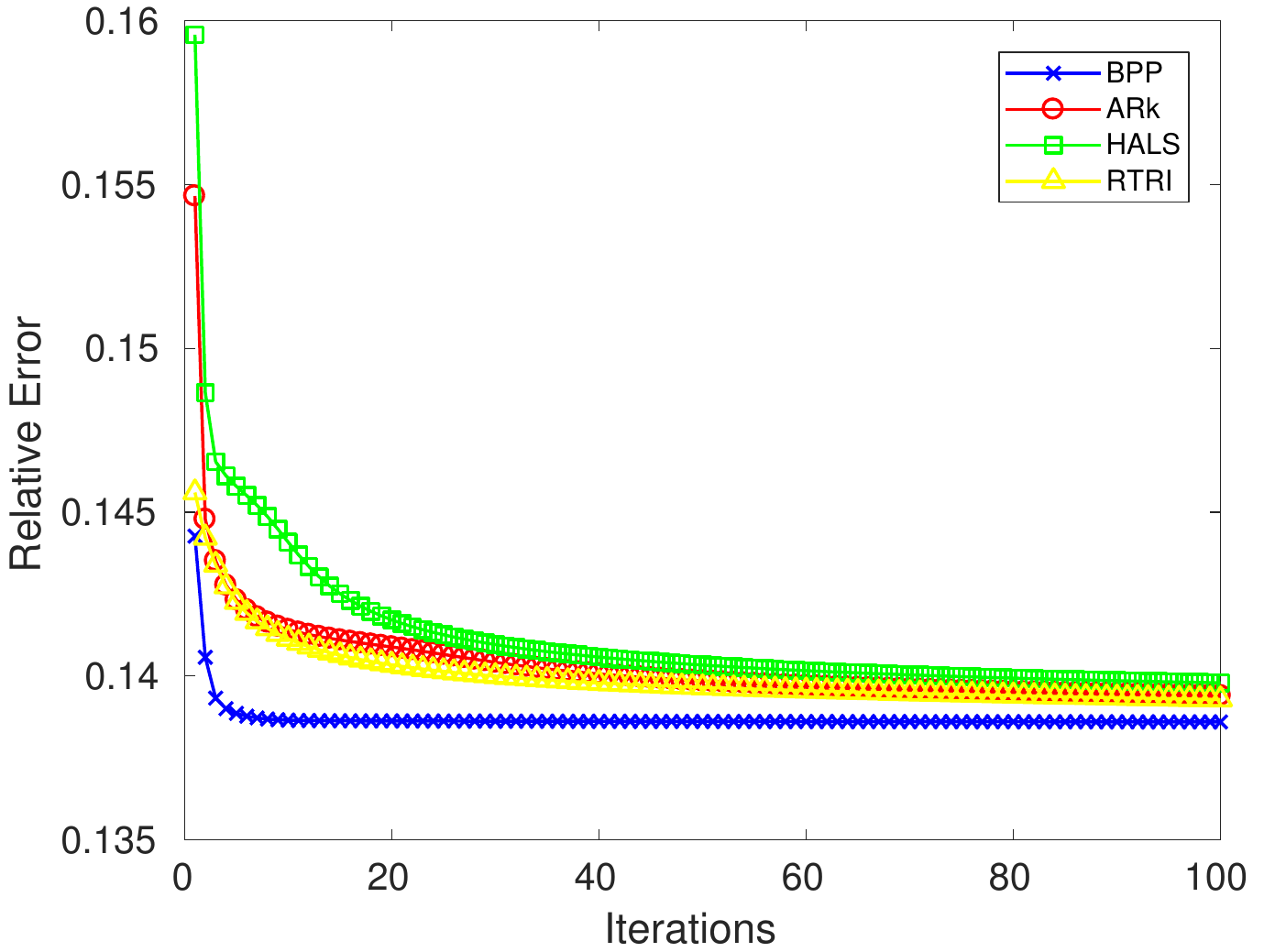}
\caption{Residual versus iterations}\label{fig:sqiters}
\end{subfigure}\hfill
\begin{subfigure}{.32\textwidth}
\centering\includegraphics[width=\linewidth]{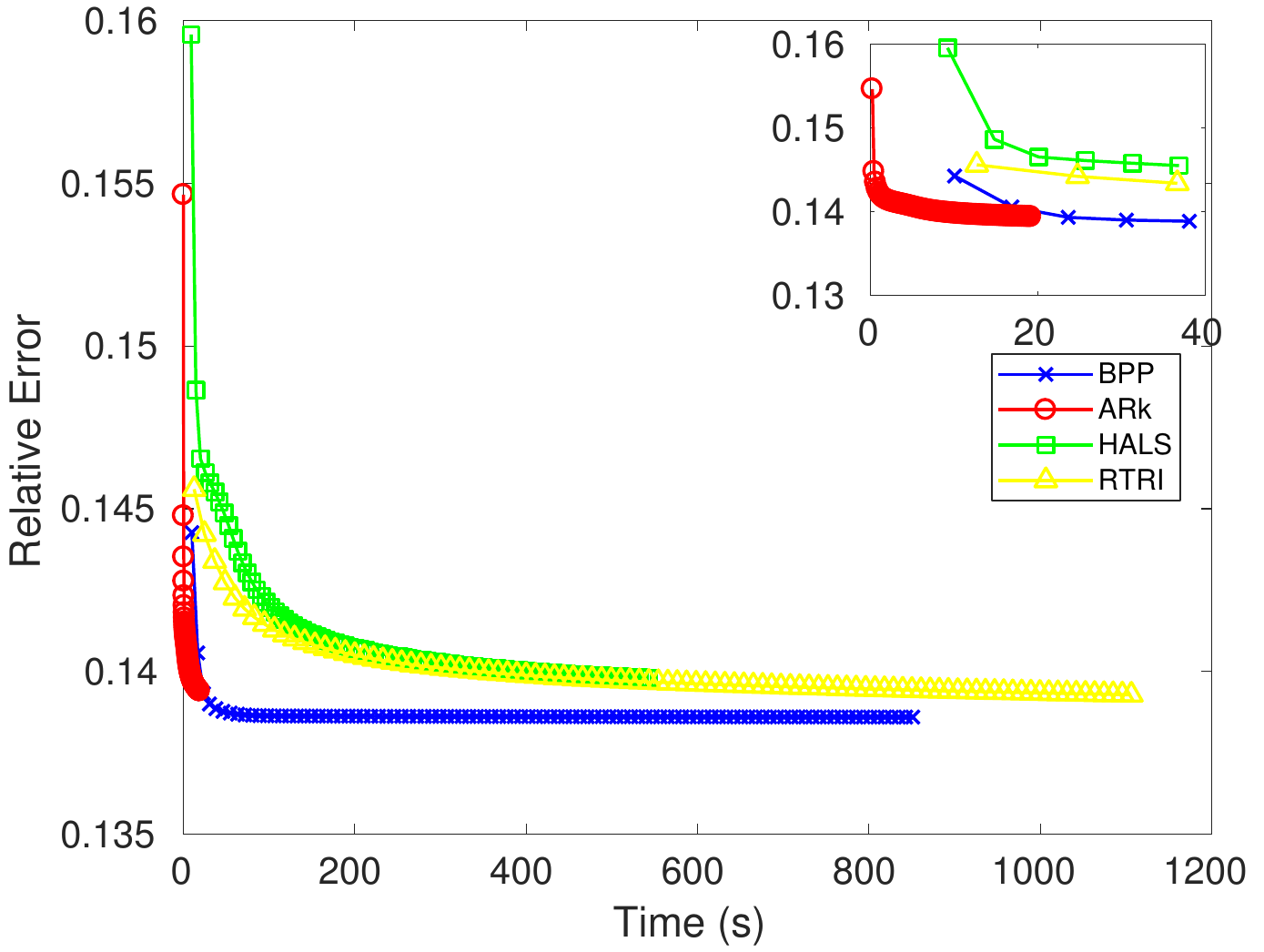}
\caption{Residual versus time}\label{fig:sqtime}
\end{subfigure}\hfill
\begin{subfigure}{.32\textwidth}
\centering\includegraphics[width=\linewidth]{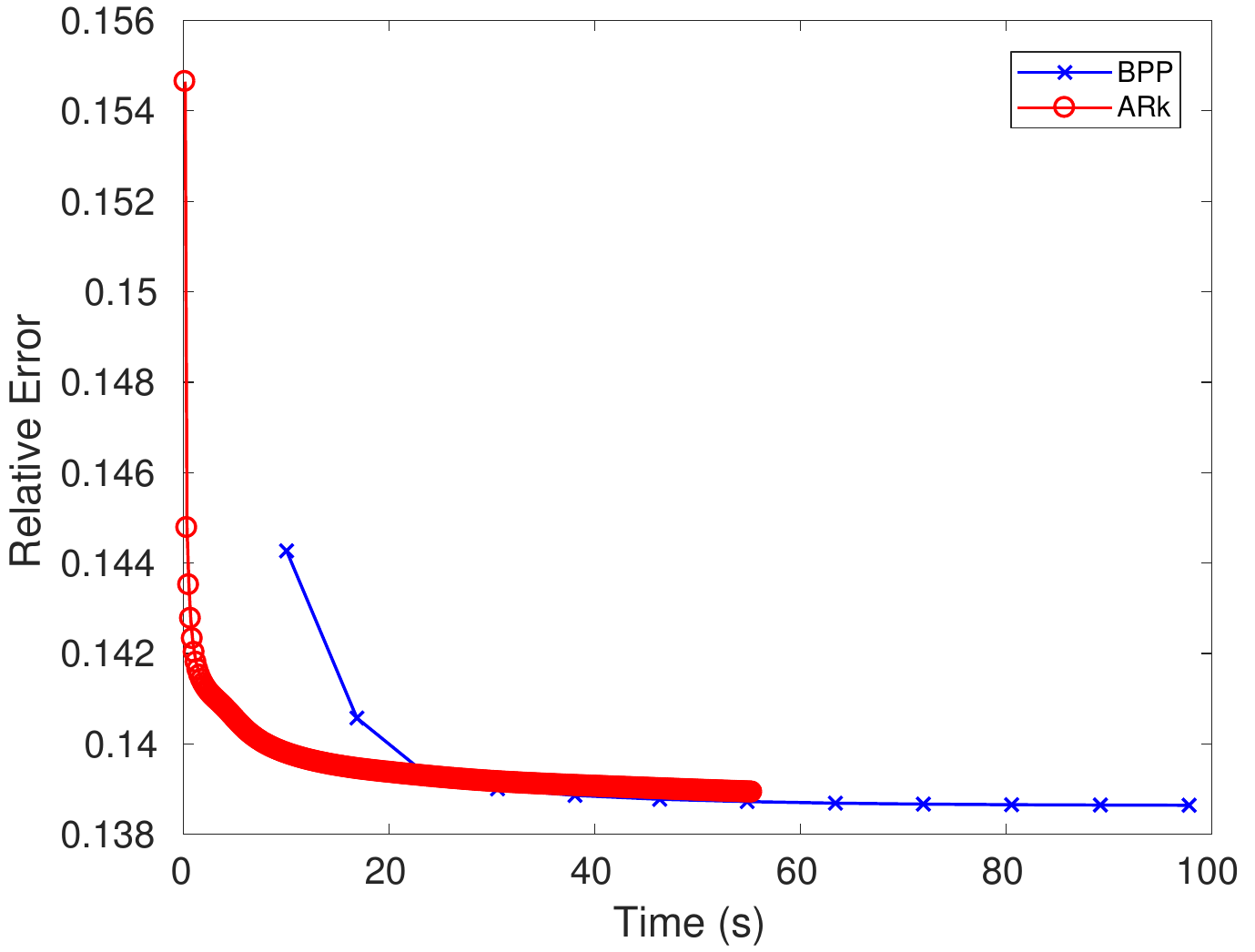}
\caption{Running ARk for 300 iterations}\label{fig:sqark}
\end{subfigure}
\caption{Square Case ($m=n$): $A \in \mathbb{R}_+^{15,000 \times 15,000}$ with $r=60$. BPP performs the best in terms of residual with ARk being next best and reaching similar residuals much faster than the other methods.}
\label{fig:synsq}
\end{figure}

\begin{figure}[htb]
\begin{subfigure}{.32\textwidth}
\centering\includegraphics[width=\linewidth]{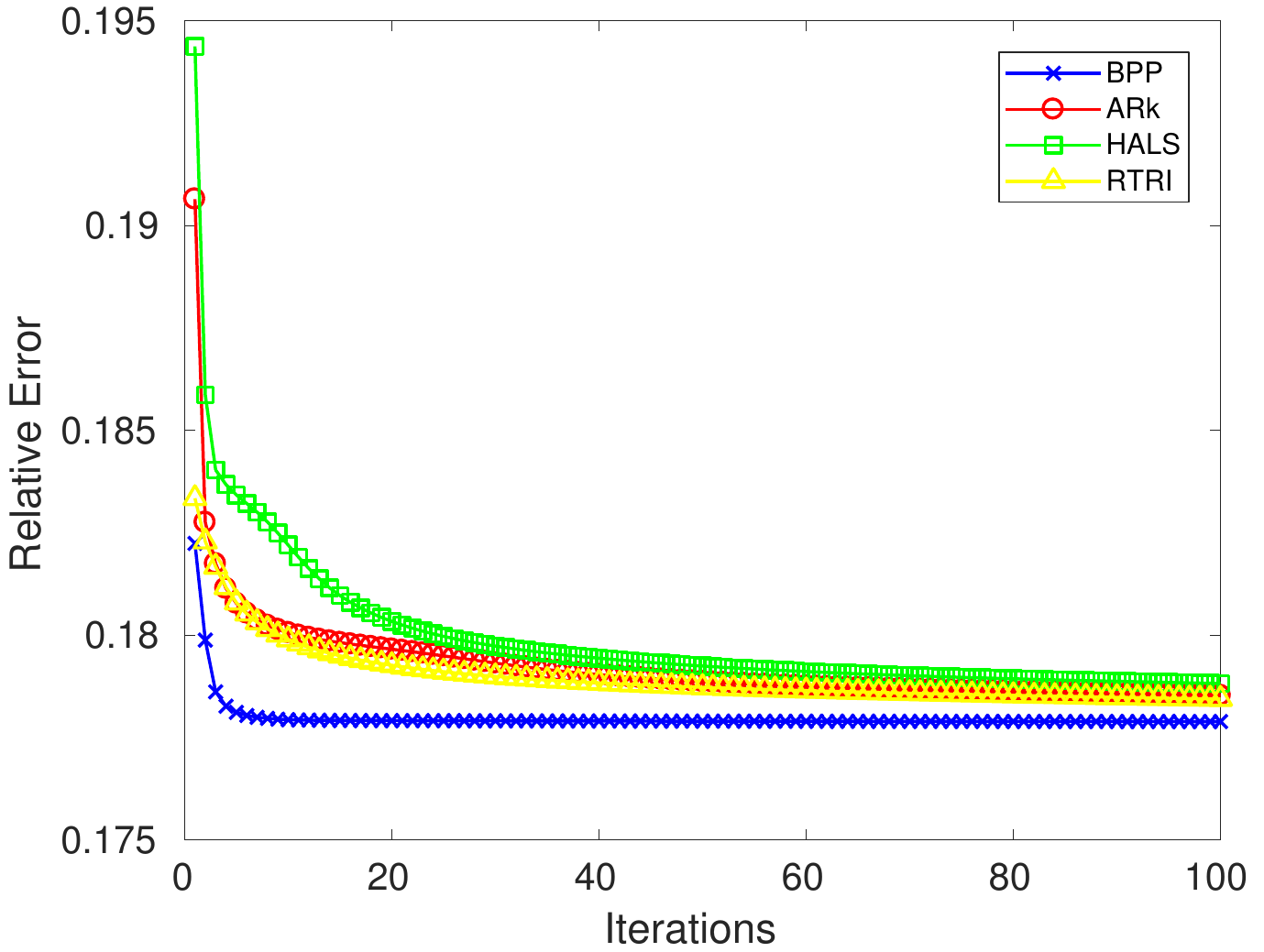}
\caption{Residual versus iterations}\label{fig:tsiters}
\end{subfigure}\hfill
\begin{subfigure}{.32\textwidth}
\centering\includegraphics[width=\linewidth]{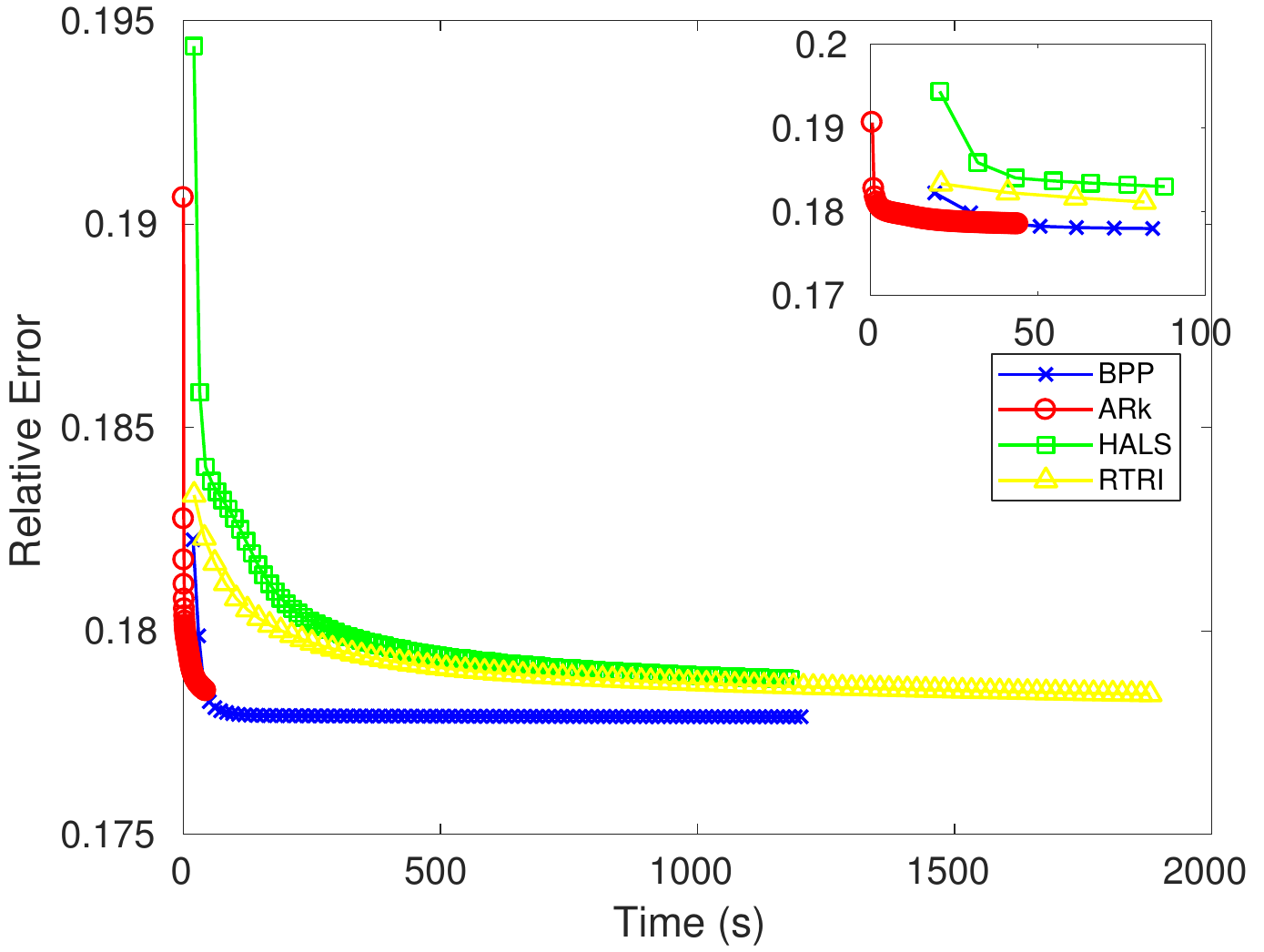}
\caption{Residual versus time}\label{fig:tstime}
\end{subfigure}\hfill
\begin{subfigure}{.32\textwidth}
\centering\includegraphics[width=\linewidth]{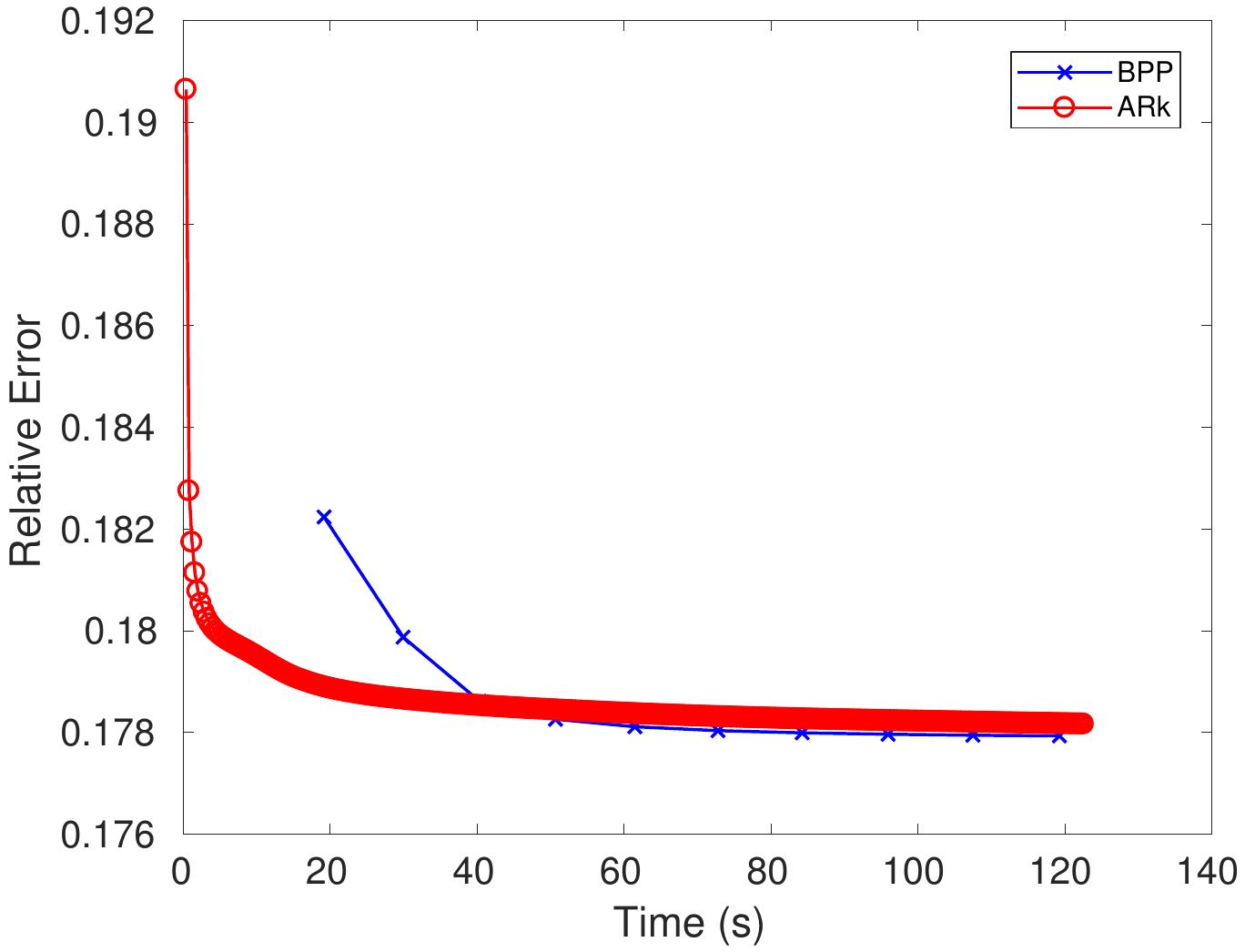}
\caption{Running ARk for 300 iterations}\label{fig:tsark}
\end{subfigure}
\caption{Tall-Skinny case ($m > n$) :$A \in \mathbb{R}_+^{25,000 \times 10,000}$ with $r=60$. BPP performs the best in terms of residual with ARk being next best and reaching similar residuals much faster than the other methods.}
\label{fig:synts}
\end{figure}

\Cref{fig:synsq,fig:synts} show the Square and Tall-Skinny cases respectively. The observations from the Short-Fat case can be carried forward to these as well. Similar results were obtained for the other choices of $r$ and $m$ and we omit them from this section for ease of presentation.

\subsubsection{Experiments with Various Ranks}

\begin{figure}[ht]
\begin{subfigure}{.48\textwidth}
\centering\includegraphics[width=0.7\linewidth]{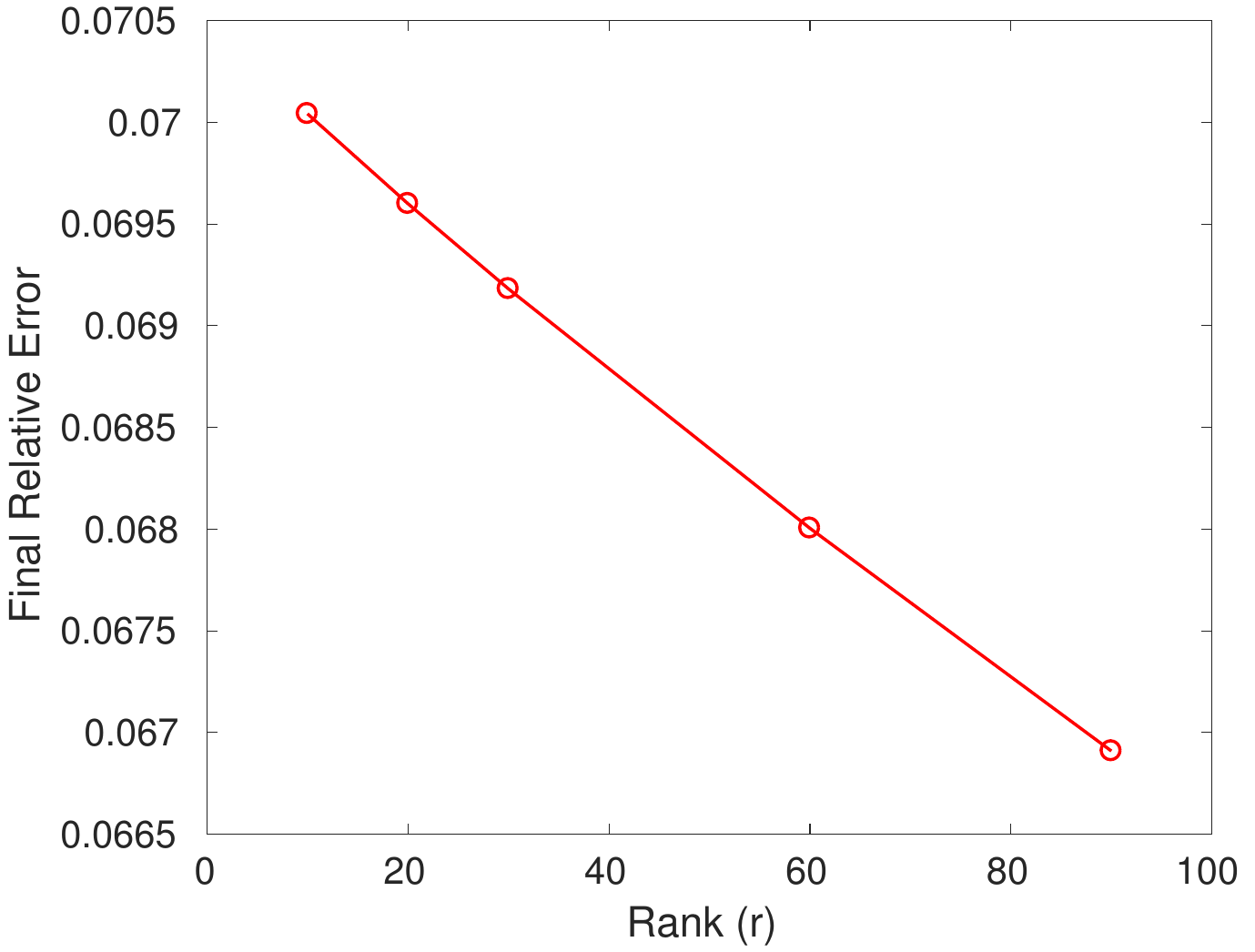}
\caption{ARk approximation}\label{fig:rsweepark}
\end{subfigure}\hfill
\begin{subfigure}{.48\textwidth}
\centering\includegraphics[width=0.7\linewidth]{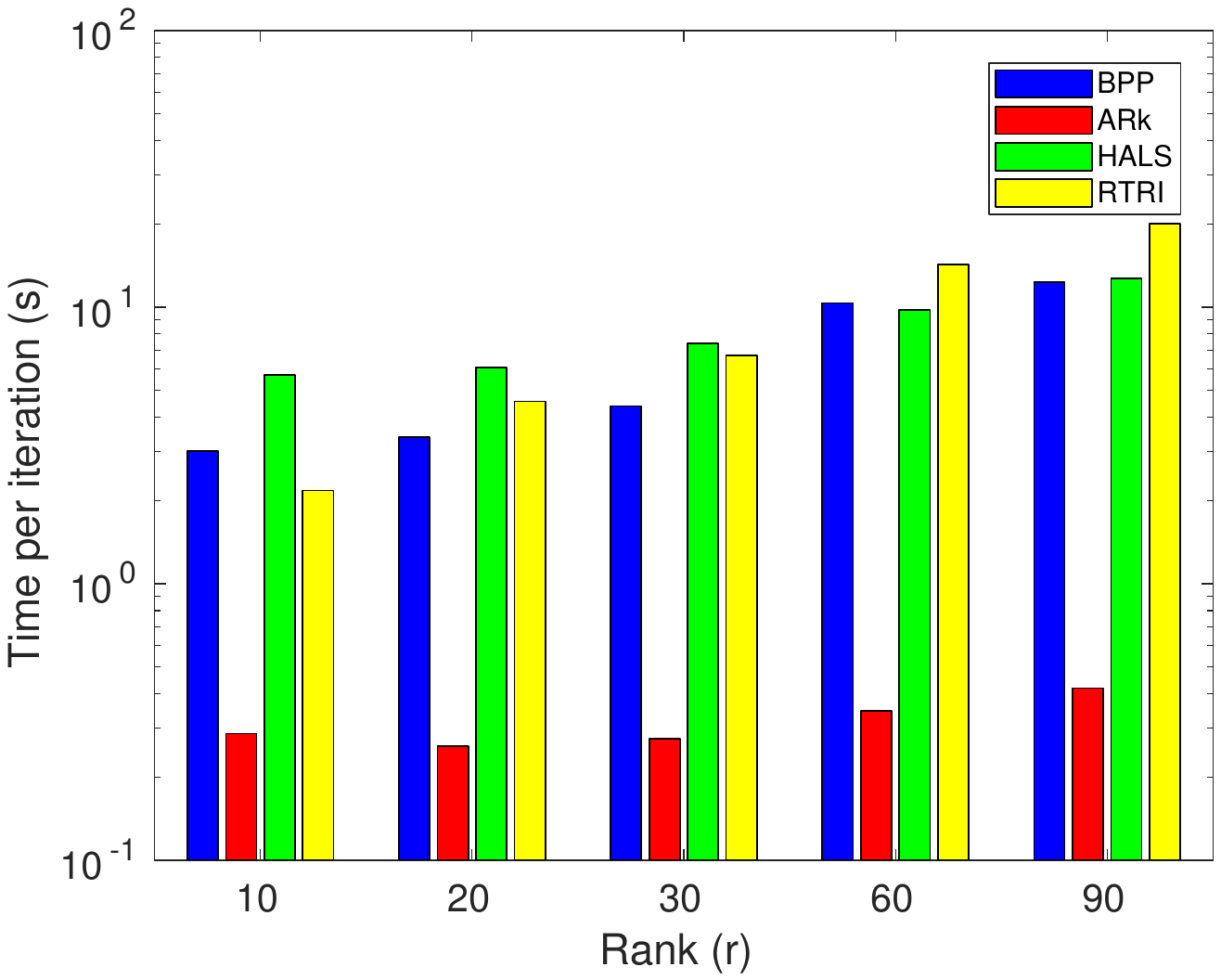}
\caption{Time per iteration}\label{fig:rsweeptime}
\end{subfigure}
\caption{Rank sweep experiments on $A \in \mathbb{R}_+^{20,000 \times 15,000}$. ARk achieves better approximations with increased $r$ as expected and maintains its computational efficiency over the other methods.}
\label{fig:rsweep}
\end{figure}
We test the effect of increasing the approximation rank ($r$) on the different algorithms. First we check the approximation quality of ARk when the $r$ in increased in~\Cref{fig:rsweepark} on a synthetic matrix $A \in \mathbb{R}_+^{20,000 \times 15,000}$ with a low-rank of 150. We can see that ARk achieves a good approximation even at $r=10$ and progressively gets better when $r$ is increased. Next we see the effects on running time when $r$ increases.~\Cref{fig:rsweeptime} shows the time per iteration, that is the time taken to update all columns of $U$ and $V$ once, of the different algorithms as $r$ increases on a matrix with $m=20,000$ and $n=15,000$. All algorithms show a moderate increase in time as $r$ increases. ARk maintains about 10 times faster $10\times$ computational speed over the other algorithms for all $r$.

\subsubsection{Experiments with Sparse Synthetic Data}
\label{sec:sparse}
The sparse synthetic matrices are created in the following manner. We first generate a dense low-rank nonnegative matrix $L \in \mathbb{R}_+^{m \times n}$ as shown in~\Cref{sec:kselect}. Then we generate a uniform random sparse matrix $X \in \mathbb{R}_+^{m \times n}$ with the desired sparsity $\rho$ and element-wise multiply it with $L$ to obtain our synthetic matrix $A = X * L$. Here $*$ denotes the element-wise product of matrices. It must be noted that this matrix is not truly low-rank due to the element-wise product.

\begin{figure}[ht]
\begin{subfigure}{.24\textwidth}
\centering\includegraphics[width=\linewidth]{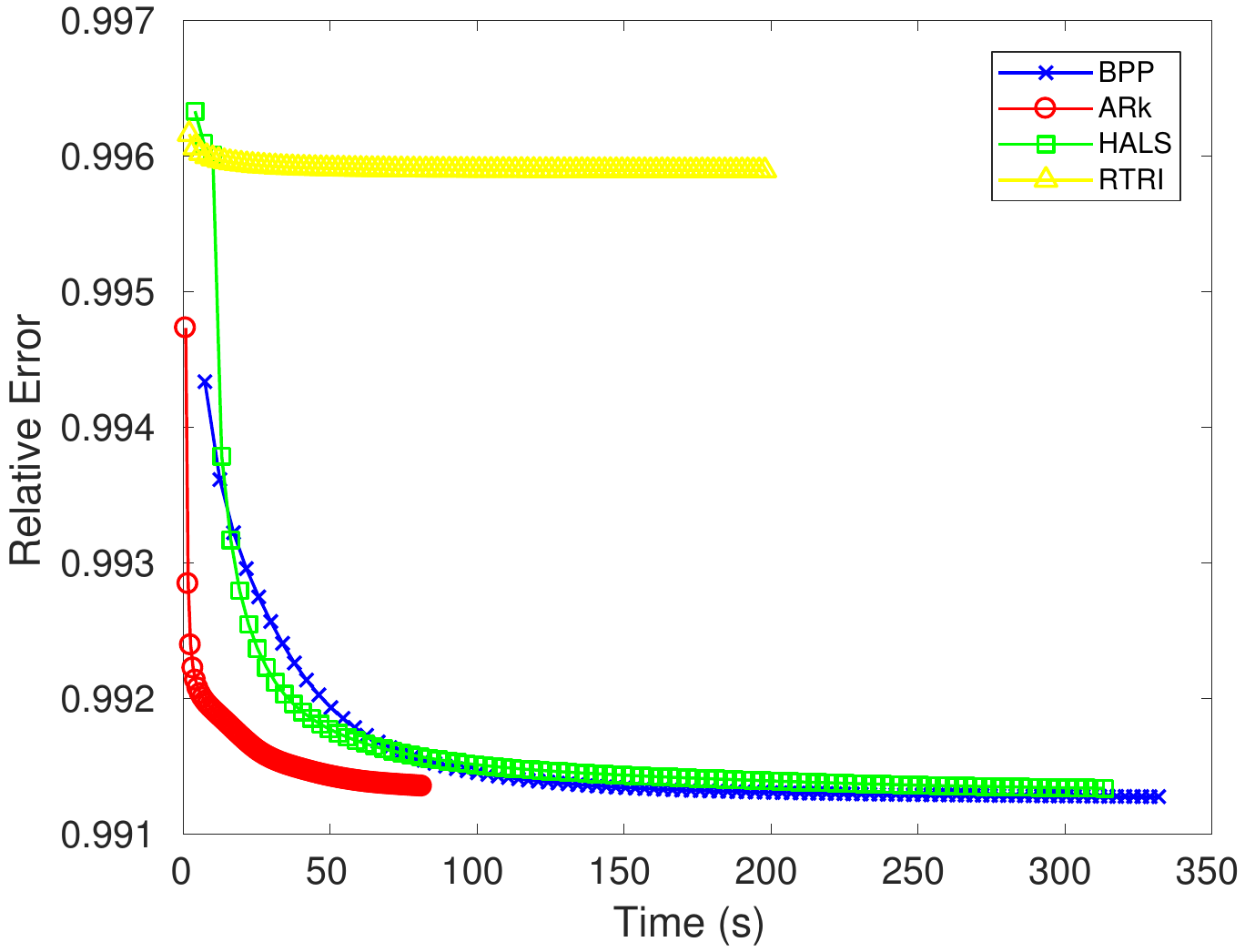}
\caption{Sparsity 0.01}\label{fig:sp1}
\end{subfigure}\hfill
\begin{subfigure}{.24\textwidth}
\centering\includegraphics[width=\linewidth]{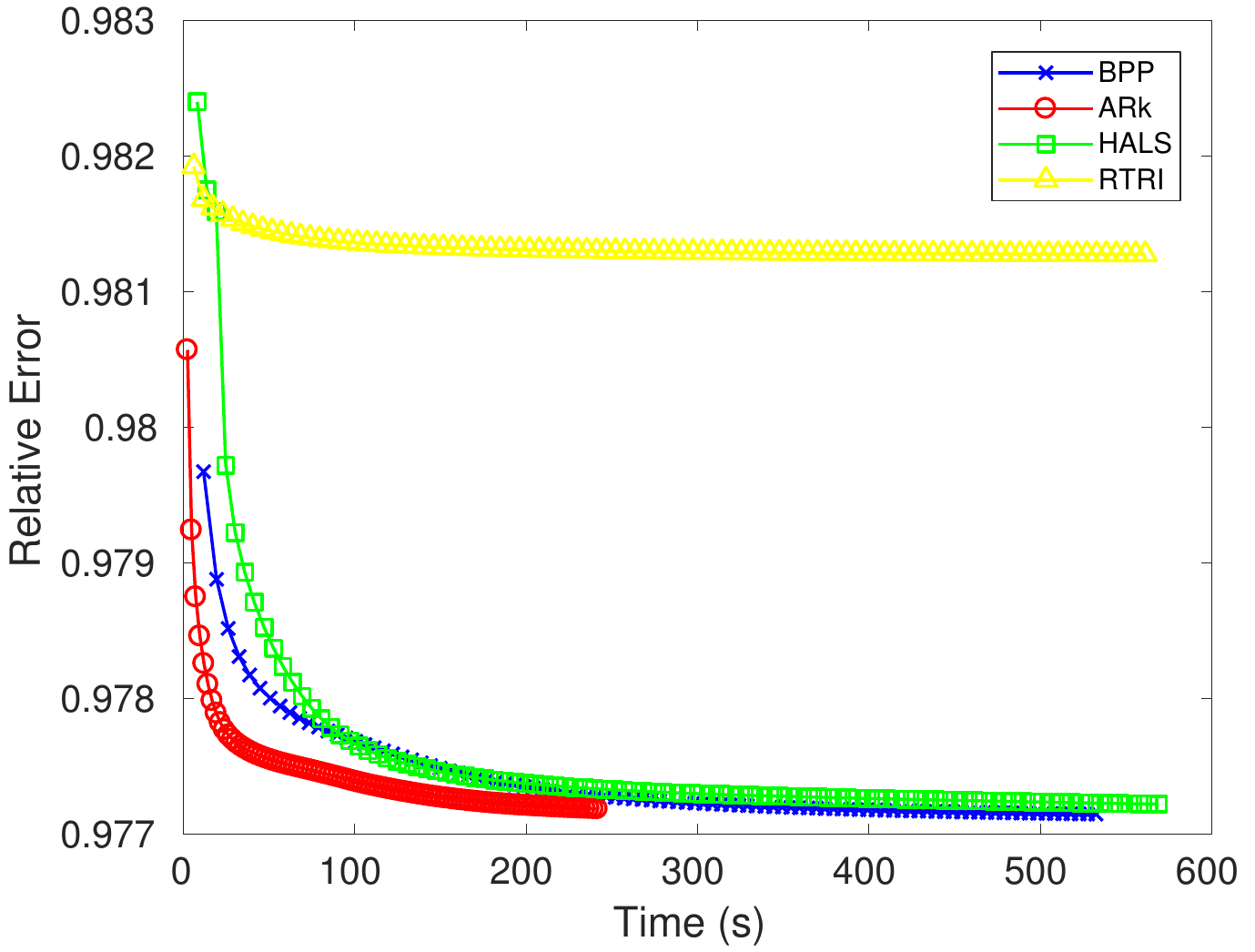}
\caption{Sparsity 0.05}\label{fig:sp2}
\end{subfigure}\hfill
\begin{subfigure}{.24\textwidth}
\centering\includegraphics[width=\linewidth]{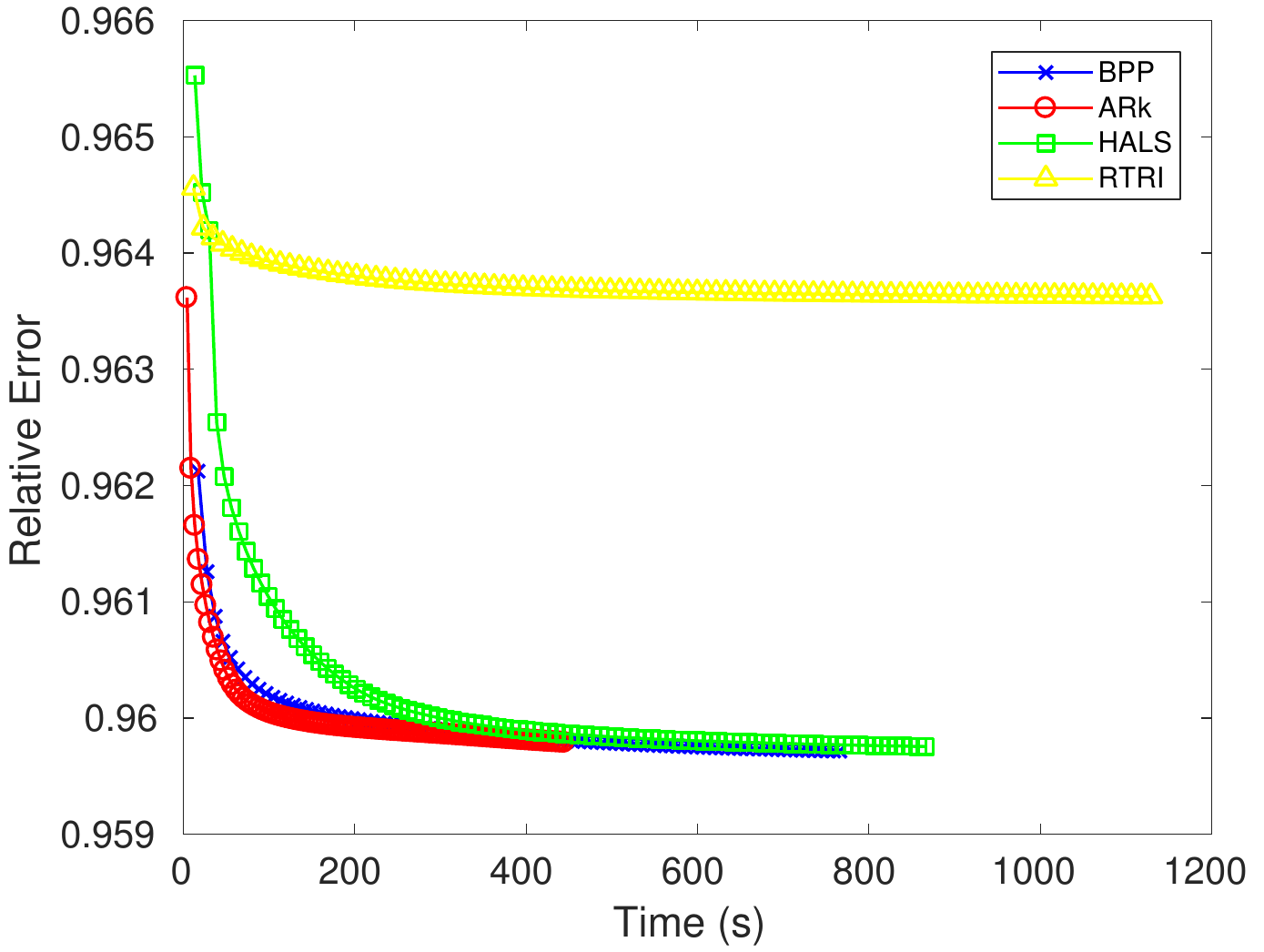}
\caption{Sparsity 0.1}\label{fig:sp3}
\end{subfigure}\hfill
\begin{subfigure}{.24\textwidth}
\centering\includegraphics[width=\linewidth]{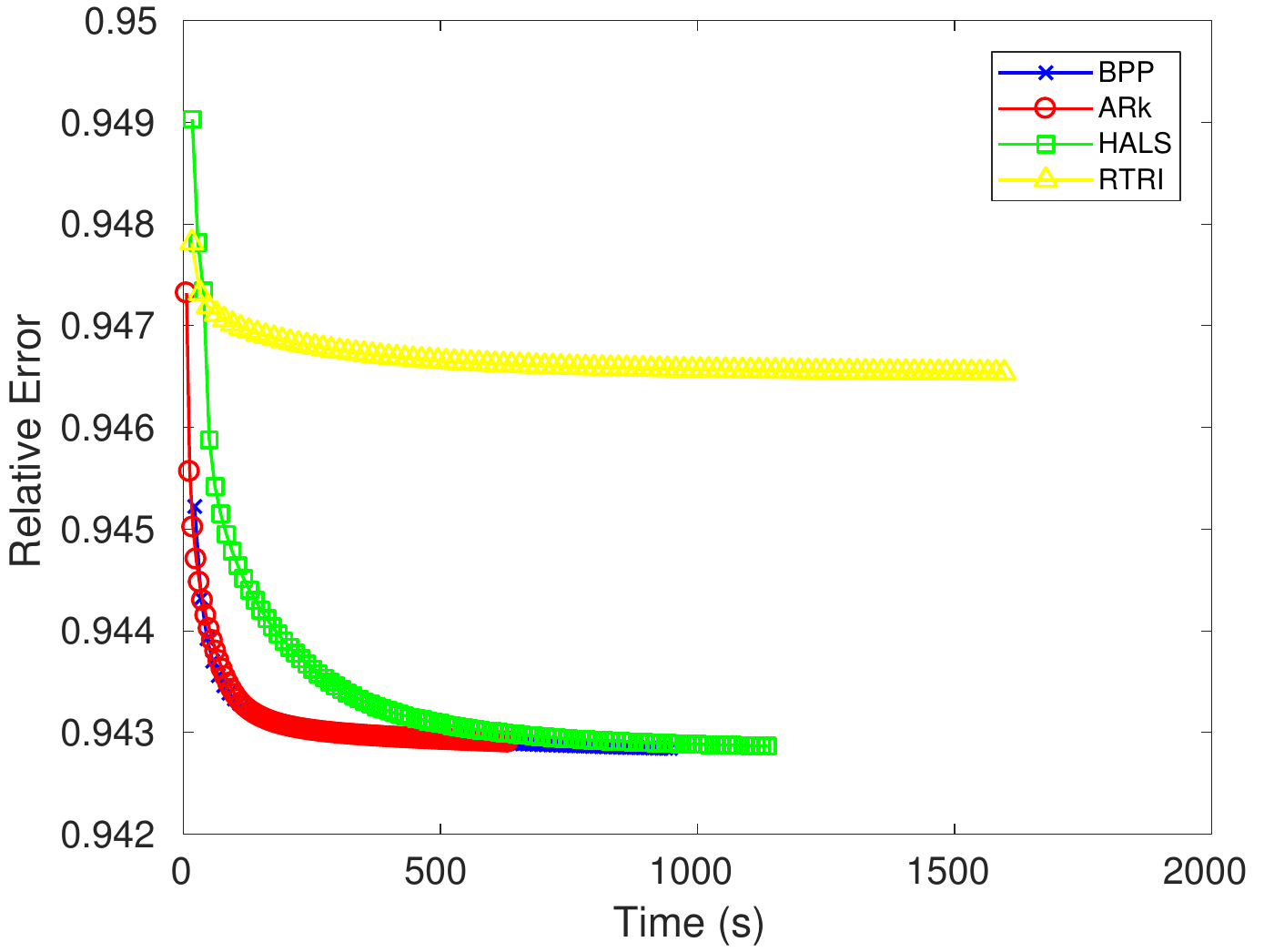}
\caption{Sparsity 0.15}\label{fig:sp4}
\end{subfigure}
\caption{Sparse case: $A \in \mathbb{R}_+^{20,000\times 15,000}$ with varying sparsities. ARk achieves the lowest residual errors in the shortest computational time.}
\label{fig:spcase}
\end{figure}

\Cref{fig:spcase} shows the performance of our four algorithms on a sparse $20,000\times 15,000$ matrix with $r=60$ and varying sparsity. We can see that BPP, ARk, and HALS achieve similar approximation errors while RTRI produces larger residual values. ARk is able to achieve the best relative error within the shortest time. Its relative speedup over the other algorithms is less than the dense case but it still is faster by a factor of $2-3$ compared to the other methods. 

\subsection{Experiments on Real World Data}
\begin{table}[ht]
\caption{Convergence results on real world data. All methods are run till the maximum time specfied and the final relative error is captured.}\label{tbl:rwres}
\begin{tabular}{ccccccccccccc}
\hline
\multirow{2}{*}{Dataset}                       & \multirow{2}{*}{Rank} & \multirow{2}{*}{Time (s)} & \multirow{2}{*}{AE (\%)} & \multicolumn{4}{c}{Final relative error} \\
                  & & &  & BPP  & ARk & HALS  & RTRI \\
\hline
\multirow{4}{*}{TDT2}         & 60   & 31   & 1                & 0.7847~$\pm$~0.0006 & 0.7788~$\pm$~0.0005  & 0.7841~$\pm$~0.0020   & 0.8585~$\pm$~0.0012 \\
                              & 90   & 42   & 1                & 0.7548~$\pm$~0.0004 & 0.7476~$\pm$~0.0004  & 0.7567~$\pm$~0.0022  & 0.8584~$\pm$~0.0040  \\
                              & 120  & 60   & 1                & 0.7312~$\pm$~0.0012 & 0.7247~$\pm$~0.0003  & 0.7340~$\pm$~0.0009   & 0.8603~$\pm$~0.0039 \\
                              & 150  & 84   & 1                & 0.7123~$\pm$~0.0007 & 0.7062~$\pm$~0.0003  & 0.7161~$\pm$~0.0014  & 0.8575~$\pm$~0.0023 \\
                              \hline
\multirow{4}{*}{Reuters} & 60   & 24   & 1                & 0.6995~$\pm$~0.0016 & 0.6935~$\pm$~0.0006  & 0.7006~$\pm$~0.0012  & 0.7907~$\pm$~0.0025 \\
                              & 90   & 38   & 1                & 0.6693~$\pm$~0.0002 & 0.6633~$\pm$~0.0003  & 0.6715~$\pm$~0.0008  & 0.7939~$\pm$~0.0058 \\
                              & 120  & 56   & 1                & 0.6459~$\pm$~0.0011 & 0.6402~$\pm$~0.0006  & 0.6482~$\pm$~0.0015  & 0.7951~$\pm$~0.0018 \\
                              & 150  & 72   & 1                & 0.6258~$\pm$~0.0015 & 0.6204~$\pm$~0.0002  & 0.6293~$\pm$~0.0012  & 0.7931~$\pm$~0.0037 \\
                              \hline                              
\multirow{4}{*}{20Newsgroups} & 60   & 40   & 1                & 0.5943~$\pm$~0.0017 & 0.5909~$\pm$~0.0015  & 0.5965~$\pm$~0.0017  & 0.6412~$\pm$~0.0054 \\
                              & 90   & 70   & 1                & 0.5626~$\pm$~0.0014 & 0.5582~$\pm$~0.0010   & 0.5642~$\pm$~0.0006  & 0.6385~$\pm$~0.0025 \\
                              & 120  & 89   & 1                & 0.5394~$\pm$~0.0006 & 0.5349~$\pm$~0.0005  & 0.5417~$\pm$~0.0011  & 0.6379~$\pm$~0.0066 \\
                              & 150  & 108  & 1                & 0.5212~$\pm$~0.0017 & 0.5172~$\pm$~0.0010   & 0.5236~$\pm$~0.0013  & 0.6354~$\pm$~0.0049 \\
                              \hline
\multirow{4}{*}{ORL}          & 60   & 58   & 1                & 0.1393~$\pm$~0.0001 & 0.1369~$\pm$~0.0001  & 0.1423~$\pm$~0.0004  & 0.1381~$\pm$~0.0001 \\
                              & 90   & 99   & 1                & 0.1241~$\pm$~0.0002 & 0.1212~$\pm$~0.0001  & 0.1263~$\pm$~0.0003  & 0.1229~$\pm$~0.0001 \\
                              & 120  & 162  & 1                & 0.1129~$\pm$~0.0001 & 0.1092~$\pm$~0.0001  & 0.1139~$\pm$~0.0001  & 0.1129~$\pm$~0.0004 \\
                              & 150  & 305  & 1                & 0.1038~$\pm$~0.0001 & 0.0988~$\pm$~0.0001  & 0.1030~$\pm$~0.0002   & 0.1062~$\pm$~0.0003 \\
                              \hline                              
\multirow{4}{*}{Facescrub}    & 60   & 454  & 1                & 0.1593~$\pm$~0.0001 & 0.1584~$\pm$~0.0001  & 0.1630~$\pm$~0.0003   & 0.2370~$\pm$~0.0080   \\
                              & 90   & 701  & 1                & 0.1435~$\pm$~0.0001 & 0.1420~$\pm$~0.0000  & 0.1470~$\pm$~0.0004   & 0.2319~$\pm$~0.0028 \\
                              & 120  & 540  & 5                & 0.1351~$\pm$~0.0004 & 0.1308~$\pm$~0.0001  & 0.1400~$\pm$~0.0003    & 0.2304~$\pm$~0.0045 \\
                              & 150  & 746  & 15               & 0.1369~$\pm$~0.0005 & 0.1223~$\pm$~0.0000 & 0.1317~$\pm$~0.0002  & 0.2278~$\pm$~0.0036 \\
                              \hline
\multirow{4}{*}{YaleB}        & 60   & 70   & 1                & 0.1679~$\pm$~0.0003 & 0.1655~$\pm$~0.0001  & 0.1751~$\pm$~0.0005  & 0.1731~$\pm$~0.0010  \\
                              & 90   & 126  & 1                & 0.1464~$\pm$~0.0003 & 0.1437~$\pm$~0.0000 & 0.1541~$\pm$~0.0007  & 0.1515~$\pm$~0.0008 \\
                              & 120  & 194  & 1                & 0.1309~$\pm$~0.0002 & 0.1283~$\pm$~0.0001  & 0.1376~$\pm$~0.0004  & 0.1396~$\pm$~0.0009 \\
                              & 150  & 364  & 1                & 0.1194~$\pm$~0.0002 & 0.1160~$\pm$~0.0001   & 0.1245~$\pm$~0.0004  & 0.1316~$\pm$~0.0008 \\
                              \hline
\multirow{4}{*}{Caltech256}   & 60   & 495  & 1                & 0.2067~$\pm$~0.0002 & 0.2054~$\pm$~0.0000 & 0.2091~$\pm$~0.0004  & 0.2812~$\pm$~0.0039 \\
                              & 90   & 763  & 1                & 0.1910~$\pm$~0.0001  & 0.1899~$\pm$~0.0001  & 0.1939~$\pm$~0.0000 & 0.2785~$\pm$~0.0041 \\
                              & 120  & 567  & 5                & 0.1845~$\pm$~0.0002 & 0.1793~$\pm$~0.0001  & 0.1868~$\pm$~0.0002  & 0.2756~$\pm$~0.0058 \\
                              & 150  & 732  & 15               & 0.1944~$\pm$~0.0003 & 0.1708~$\pm$~0.0001  & 0.1787~$\pm$~0.0002  & 0.2716~$\pm$~0.0050  \\
                              \hline                              
\end{tabular}
\end{table}

We run the four algorithms on the real world data described in~\Cref{tbl:rwdata}. We test a suite of approximation rank $r$, varying from 60 to 150 in increments of 30. In these experiments we want to measure how the algorithms converge over time and use an upper bound on time as the stopping criteria. The maximum time is selected as follows. From~\ref{sec:synexp} we can see that generally BPP achieves the lowest residual after 100 iterations. We run BPP for 100 iterations on all the datasets which then select the time needed to reach within 1\%,5\%,10\% or 15\% of that final error. This approximation is chosen to keep the overall running time of each algorithm under 15 minutes. For most the datasets we are able to approximate upto 1\% of error except for some instances of the larger Caltech and Facescrub runs. The results of these displayed in~\cref{tbl:rwres}. The table contains the final residual of all the algorithms averaged over 5 runs with different initialisations. AE is the approximation percentange of the error with respect to running BPP for 100 iterations.

We can observe that ARk is th best performing algorithm followed by BPP, HALS, and RTRI in that order. RTRI performs particularly poorly for the sparse inputs. The main reason for ARk performing well on these experiments is due to its computational efficiency. ARk is able to perform thousands of iterations in the same time as it takes BPP or HALS to perform tens. This can be clearly seen in~\cref{fig:rwcase} where we can see that ARk has converged very quickly and before BPP can complete a single iteration. It is true that given enough time the other algorithms, especially BPP, might find a better solution but this could be prohibitively expensive especially for larger values $r$. From these experiments we can conclude that ARk strikes a good balance between accuracy and computational efficiency and discovers good approximations much faster than the the other methods compared in this work.

\begin{figure}[tb]
\begin{subfigure}{.48\textwidth}
\centering\includegraphics[width=0.7\linewidth]{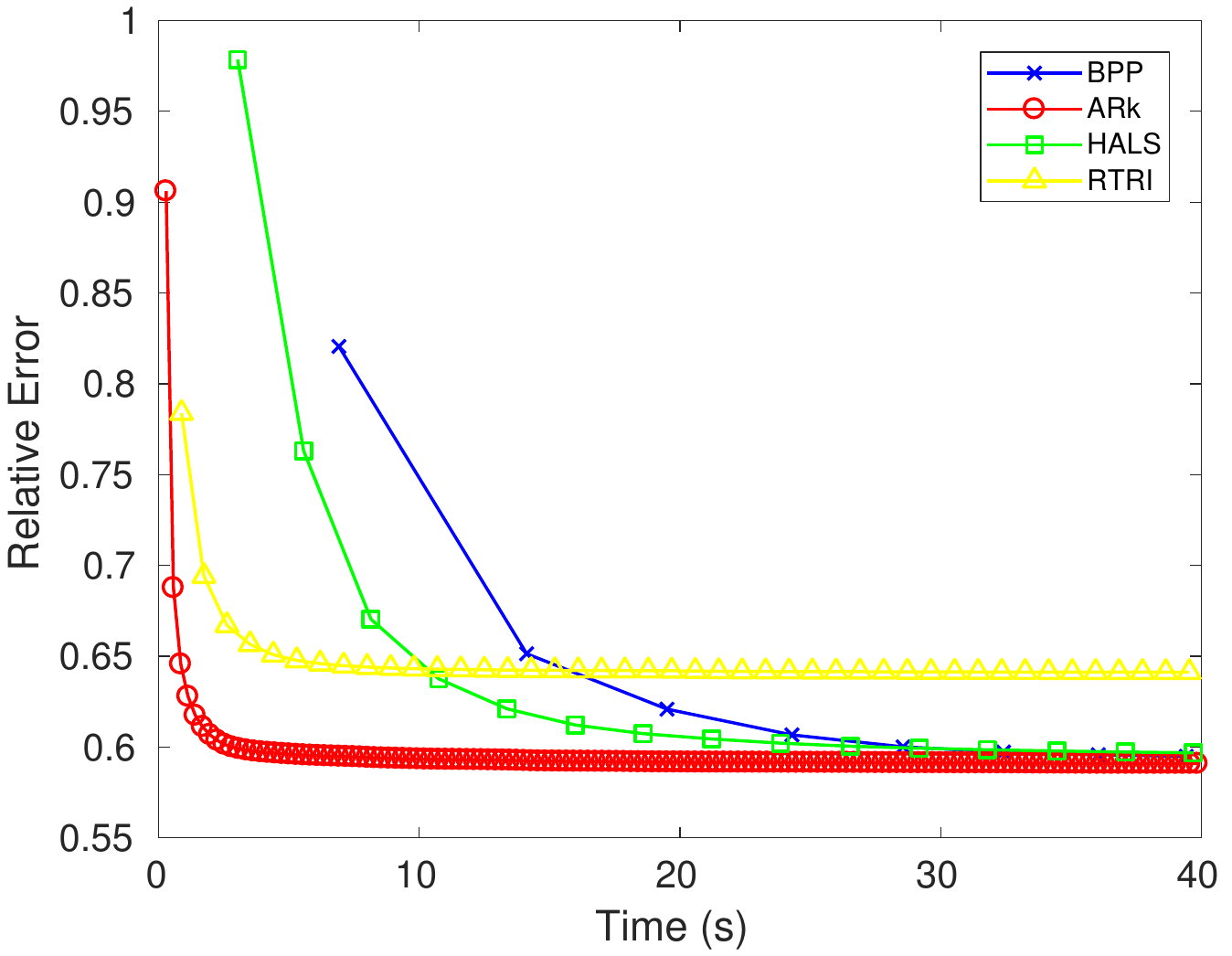}
\caption{20Newsgroups with $r=60$}\label{fig:20ng}
\end{subfigure}
\begin{subfigure}{.48\textwidth}
\centering\includegraphics[width=0.7\linewidth]{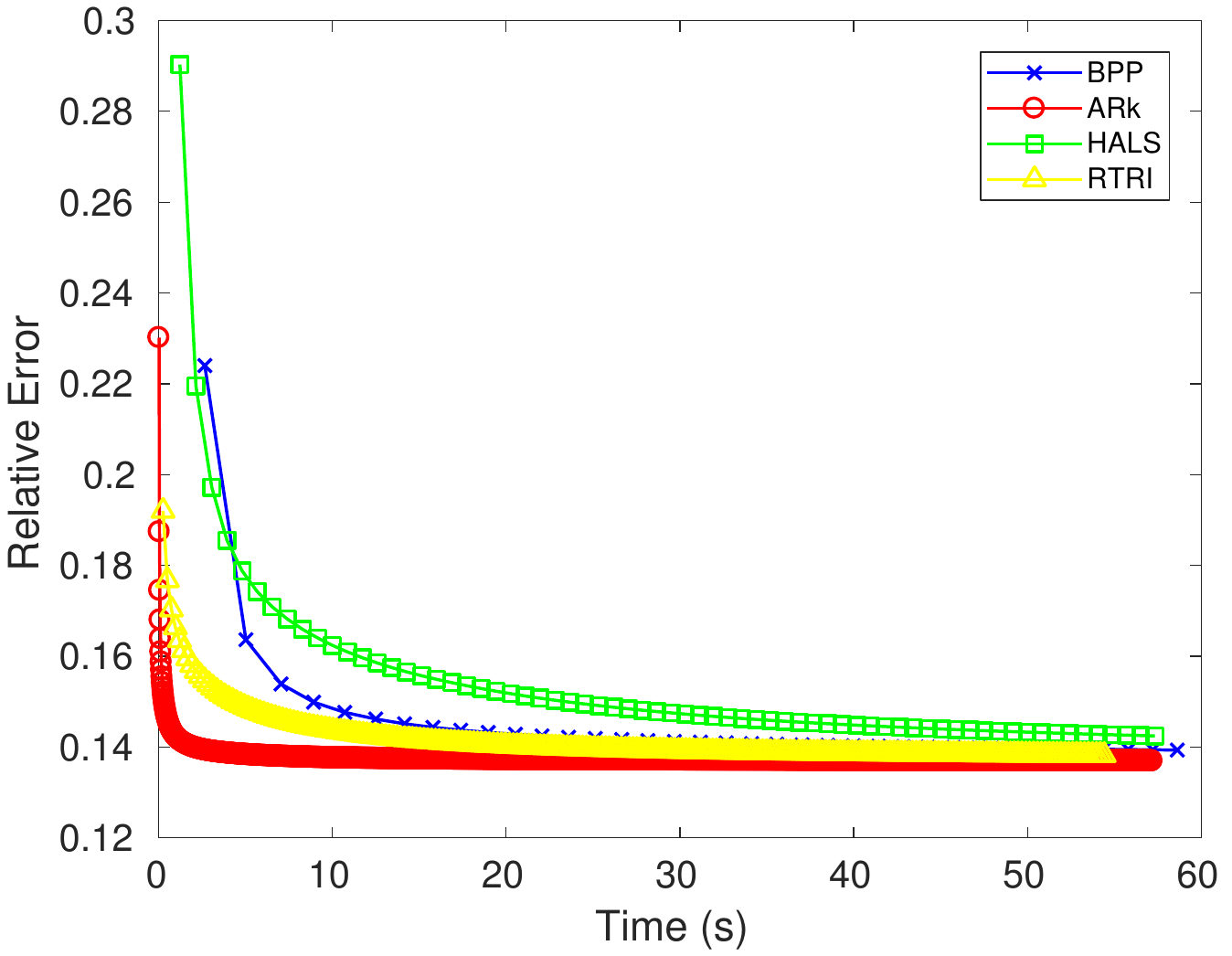}
\caption{ORL with $r=60$}\label{fig:orl}
\end{subfigure}
\caption{Some typical convergence plots for real world world data. ARk is able to achieve good approximation error in a fraction of the time taken by the other methods.}
\label{fig:rwcase}
\end{figure}
\section{ Concluding Remarks}\label{sec6}
In this paper, we have established the recursive formula for the solutions of the \emph{rank-k NLS} and
developed an alternating \emph{rank-k} nonnegative least squares framework ARkNLS for NMF based on this  recursive formula.
We have studied ARkNLS with $k=3$ further which builds upon the \emph{rank-3} residue iteration for NLS that
updates two more columns than HALS per updating step. We have also introduced an new strategy that efficiently
overcomes the potential singularity problem within the context of NMF computation. Extensive numerical comparisons
using real datasets demonstrate that our new algorithm ARkNLS(k=3) provides state-of-the-art performance in terms
of computational accuracy and cpu time.


\bibliographystyle{siamplain}
\bibliography{arknmf}

\begin{thebibliography}{10}

\bibitem{AG}
{\sc A.~M.~S. Ang and N.~Gillis}, {\em Accelerating nonnegative matrix
  factorization algorithms using extrapolation}, Neural computation, 31 (2019),
  pp.~417--439.

\bibitem{BSJJZ}
{\sc T.~Bouwmans, A.~Sobral, S.~Javed, S.~K. Jung, and E.-H. Zahzah}, {\em
  Decomposition into low-rank plus additive matrices for background/foreground
  separation: A review for a comparative evaluation with a large-scale
  dataset}, Computer Science Review, 23 (2017), pp.~1--71.

\bibitem{BJ}
{\sc R.~Bro and S.~De~Jong}, {\em A fast non-negativity-constrained least
  squares algorithm}, Journal of Chemometrics: A Journal of the Chemometrics
  Society, 11 (1997), pp.~393--401.

\bibitem{BTGM}
{\sc J.-P. Brunet, P.~Tamayo, T.~R. Golub, and J.~P. Mesirov}, {\em Metagenes
  and molecular pattern discovery using matrix factorization}, Proceedings of
  the national academy of sciences, 101 (2004), pp.~4164--4169.

\bibitem{B}
{\sc I.~Buciu}, {\em Non-negative matrix factorization, a new tool for feature
  extraction: theory and applications}, International Journal of Computers,
  Communications and Control, 3 (2008), pp.~67--74.

\bibitem{CWY}
{\sc Y.~T. Chow, T.~Wu, and W.~Yin}, {\em Cyclic coordinate-update algorithms
  for fixed-point problems: Analysis and applications}, SIAM Journal on
  Scientific Computing, 39 (2017), pp.~A1280--A1300.

\bibitem{CP}
{\sc A.~Cichocki and A.-H. Phan}, {\em Fast local algorithms for large scale
  nonnegative matrix and tensor factorizations}, IEICE transactions on
  fundamentals of electronics, communications and computer sciences, 92 (2009),
  pp.~708--721.

\bibitem{CZA}
{\sc A.~Cichocki, R.~Zdunek, and S.-i. Amari}, {\em Hierarchical als algorithms
  for nonnegative matrix and 3d tensor factorization}, in International
  Conference on Independent Component Analysis and Signal Separation, Springer,
  2007, pp.~169--176.

\bibitem{CZPA}
{\sc A.~Cichocki, R.~Zdunek, A.~H. Phan, and S.-i. Amari}, {\em Nonnegative
  matrix and tensor factorizations: applications to exploratory multi-way data
  analysis and blind source separation}, John Wiley \& Sons, 2009.

\bibitem{CLD}
{\sc G.~Cui, X.~Li, and Y.~Dong}, {\em Subspace clustering guided convex
  nonnegative matrix factorization}, Neurocomputing, 292 (2018), pp.~38--48.

\bibitem{D}
{\sc K.~Devarajan}, {\em Nonnegative matrix factorization: an analytical and
  interpretive tool in computational biology}, PLoS Comput Biol, 4 (2008),
  p.~e1000029.

\bibitem{DLP18}
{\sc B.~Dong, M.~M. Lin, and H.~Park}, {\em Integer matrix approximation and
  data mining}, Journal of scientific computing, 75 (2018), pp.~198--224.

\bibitem{DHB+17}
{\sc B.~Drake, T.~Huang, A.~Beavers, R.~Du, and H.~Park}, {\em Event detection
  based on nonnegative matrix factorization: Ceasefire violation,
  environmental, and malware events}, in International Conference on Applied
  Human Factors and Ergonomics, Springer, 2017, pp.~158--169.

\bibitem{DDP}
{\sc R.~Du, B.~Drake, and H.~Park}, {\em Hybrid clustering based on content and
  connection structure using joint nonnegative matrix factorization}, Journal
  of Global Optimization, 74 (2019), pp.~861--877.

\bibitem{DKDP}
{\sc R.~Du, D.~Kuang, B.~Drake, and H.~Park}, {\em Dc-nmf: nonnegative matrix
  factorization based on divide-and-conquer for fast clustering and topic
  modeling}, Journal of Global Optimization, 68 (2017), pp.~777--798.

\bibitem{EMWK}
{\sc N.~B. Erichson, A.~Mendible, S.~Wihlborn, and J.~N. Kutz}, {\em Randomized
  nonnegative matrix factorization}, Pattern Recognition Letters, 104 (2018),
  pp.~1--7.

\bibitem{Ret}
{\sc R.~Fujimoto, A.~Guin, M.~Hunter, H.~Park, G.~Kanitkar, R.~Kannan,
  M.~Milholen, S.~Neal, and P.~Pecher}, {\em A dynamic data driven application
  system for vehicle tracking}, Procedia Computer Science, 29 (2014),
  pp.~1203--1215.

\bibitem{GKH}
{\sc N.~Gillis, D.~Kuang, and H.~Park}, {\em Hierarchical clustering of
  hyperspectral images using rank-two nonnegative matrix factorization}, IEEE
  Transactions on Geoscience and Remote Sensing, 53 (2014), pp.~2066--2078.

\bibitem{HY}
{\sc Z.-Q. He and X.~Yuan}, {\em Block iteratively reweighted algorithms for
  robust symmetric nonnegative matrix factorization}, IEEE Signal Processing
  Letters, 25 (2018), pp.~1510--1514.

\bibitem{Ho}
{\sc N.~Ho}, {\em Nonnegative matrix factorization algorithms and
  applications}, PhD thesis, Univ. Catholique de Louvain, 2008.

\bibitem{H}
{\sc P.~O. Hoyer}, {\em Non-negative matrix factorization with sparseness
  constraints}, Journal of machine learning research, 5 (2004), pp.~1457--1469.

\bibitem{KSD}
{\sc D.~Kim, S.~Sra, and I.~S. Dhillon}, {\em Fast newton-type methods for the
  least squares nonnegative matrix approximation problem}, in Proceedings of
  the 2007 SIAM international conference on data mining, SIAM, 2007,
  pp.~343--354.

\bibitem{KP}
{\sc H.~Kim and H.~Park}, {\em Sparse non-negative matrix factorizations via
  alternating non-negativity-constrained least squares for microarray data
  analysis}, Bioinformatics, 23 (2007), pp.~1495--1502.

\bibitem{KP1}
{\sc H.~Kim and H.~Park}, {\em Nonnegative matrix factorization based on
  alternating nonnegativity constrained least squares and active set method},
  SIAM journal on matrix analysis and applications, 30 (2008), pp.~713--730.

\bibitem{K}
{\sc J.~Kim}, {\em Nonnegative matrix and tensor factorizations, least squares
  problems, and applications}, PhD thesis, Georgia Institute of Technology,
  2011.

\bibitem{KHP}
{\sc J.~Kim, Y.~He, and H.~Park}, {\em Algorithms for nonnegative matrix and
  tensor factorizations: A unified view based on block coordinate descent
  framework}, Journal of Global Optimization, 58 (2014), pp.~285--319.

\bibitem{KP2}
{\sc J.~Kim and H.~Park}, {\em Toward faster nonnegative matrix factorization:
  A new algorithm and comparisons}, in 2008 Eighth IEEE International
  Conference on Data Mining, IEEE, 2008, pp.~353--362.

\bibitem{kp11}
{\sc J.~Kim and H.~Park}, {\em Fast nonnegative matrix factorization: An
  active-set-like method and comparisons}, SIAM Journal on Scientific
  Computing, 33 (2011), pp.~3261--3281.

\bibitem{KDP}
{\sc D.~Kuang, C.~Ding, and H.~Park}, {\em Symmetric nonnegative matrix
  factorization for graph clustering}, in Proceedings of the 2012 SIAM
  international conference on data mining, SIAM, 2012, pp.~106--117.

\bibitem{KYP}
{\sc D.~Kuang, S.~Yun, and H.~Park}, {\em Symnmf: nonnegative low-rank
  approximation of a similarity matrix for graph clustering}, Journal of Global
  Optimization, 62 (2015), pp.~545--574.

\bibitem{LH}
{\sc C.~L. Lawson and R.~J. Hanson}, {\em Solving least squares problems},
  SIAM, 1995.

\bibitem{LS}
{\sc D.~D. Lee and H.~S. Seung}, {\em Learning the parts of objects by
  non-negative matrix factorization}, Nature, 401 (1999), pp.~788--791.

\bibitem{LWP}
{\sc Z.~Li, X.~Wu, and H.~Peng}, {\em Nonnegative matrix factorization on
  orthogonal subspace}, Pattern Recognition Letters, 31 (2010), pp.~905--911.

\bibitem{L}
{\sc C.-J. Lin}, {\em Projected gradient methods for nonnegative matrix
  factorization}, Neural computation, 19 (2007), pp.~2756--2779.

\bibitem{LZ}
{\sc H.~Liu and Y.~Zhou}, {\em Rank-two residue iteration method for
  nonnegative matrix factorization}, Neurocomputing, 74 (2011), pp.~3305--3312.

\bibitem{LPGF}
{\sc P.~Luo, J.~Peng, Z.~Guan, and J.~Fan}, {\em Dual regularized multi-view
  non-negative matrix factorization for clustering}, Neurocomputing, 294
  (2018), pp.~1--11.

\bibitem{MZ}
{\sc M.~Merritt and Y.~Zhang}, {\em Interior-point gradient method for
  large-scale totally nonnegative least squares problems}, Journal of
  optimization theory and applications, 126 (2005), pp.~191--202.

\bibitem{NW}
{\sc H.-W. Ng and S.~Winkler}, {\em A data-driven approach to cleaning large
  face datasets}, in 2014 IEEE international conference on image processing
  (ICIP), IEEE, 2014, pp.~343--347.

\bibitem{PT}
{\sc P.~Paatero and U.~Tapper}, {\em Positive matrix factorization: A
  non-negative factor model with optimal utilization of error estimates of data
  values}, Environmetrics, 5 (1994), pp.~111--126.

\bibitem{PPP}
{\sc V.~P. Pauca, J.~Piper, and R.~J. Plemmons}, {\em Nonnegative matrix
  factorization for spectral data analysis}, Linear algebra and its
  applications, 416 (2006), pp.~29--47.

\bibitem{PSBP}
{\sc V.~P. Pauca, F.~Shahnaz, M.~W. Berry, and R.~J. Plemmons}, {\em Text
  mining using non-negative matrix factorizations}, in Proceedings of the 2004
  SIAM International Conference on Data Mining, SIAM, 2004, pp.~452--456.

\bibitem{PCX}
{\sc X.~Peng, D.~Chen, and D.~Xu}, {\em Semi-supervised least squares
  nonnegative matrix factorization and graph-based extension}, Neurocomputing,
  320 (2018), pp.~98--111.

\bibitem{SBF}
{\sc A.~Sapienza, A.~Bessi, and E.~Ferrara}, {\em Non-negative tensor
  factorization for human behavioral pattern mining in online games},
  Information, 9 (2018), p.~66.

\bibitem{SWSL}
{\sc J.~Sun, Z.~Wang, F.~Sun, and H.~Li}, {\em Sparse dual graph-regularized
  nmf for image co-clustering}, Neurocomputing, 316 (2018), pp.~156--165.

\bibitem{TKCJ}
{\sc A.~Tosyali, J.~Kim, J.~Choi, and M.~K. Jeong}, {\em Regularized asymmetric
  nonnegative matrix factorization for clustering in directed networks},
  Pattern Recognition Letters, 125 (2019), pp.~750--757.

\bibitem{BK}
{\sc M.~H. Van~Benthem and M.~R. Keenan}, {\em Fast algorithm for the solution
  of large-scale non-negativity-constrained least squares problems}, Journal of
  Chemometrics: A Journal of the Chemometrics Society, 18 (2004), pp.~441--450.

\bibitem{WSZ}
{\sc C.~Wang, X.~Song, and J.~Zhang}, {\em Graph regularized nonnegative matrix
  factorization with sample diversity for image representation}, Engineering
  Applications of Artificial Intelligence, 68 (2018), pp.~32--39.

\bibitem{WZ}
{\sc Y.-X. Wang and Y.-J. Zhang}, {\em Nonnegative matrix factorization: A
  comprehensive review}, IEEE Transactions on Knowledge and Data Engineering,
  25 (2012), pp.~1336--1353.

\bibitem{WWZCX}
{\sc B.~Wu, E.~Wang, Z.~Zhu, W.~Chen, and P.~Xiao}, {\em Manifold nmf with
  $l_{21}$ norm for clustering}, Neurocomputing, 273 (2018), pp.~78--88.

\bibitem{XLG}
{\sc W.~Xu, X.~Liu, and Y.~Gong}, {\em Document clustering based on
  non-negative matrix factorization}, in Proceedings of the 26th annual
  international ACM SIGIR conference on Research and development in informaion
  retrieval, 2003, pp.~267--273.

\bibitem{zc}
{\sc R.~Zdunek and A.~Cichocki}, {\em Non-negative matrix factorization with
  quasi-newton optimization}, in International conference on artificial
  intelligence and soft computing, Springer, 2006, pp.~870--879.

\bibitem{ZC1}
{\sc R.~Zdunek and A.~Cichocki}, {\em Fast nonnegative matrix factorization
  algorithms using projected gradient approaches for large-scale problems},
  Computational intelligence and neuroscience, 2008 (2008).

\bibitem{ZH}
{\sc F.~Zhu and P.~Honeine}, {\em Online kernel nonnegative matrix
  factorization}, Signal Processing, 131 (2017), pp.~143--153.

\end{thebibliography}

\end{document}